\newtheorem{theorem}{Theorem}
\newtheorem{lemma}[theorem]{Lemma}
\newtheorem{definition}[theorem]{Definition}
\newtheorem{corollary}[theorem]{Corollary}
\newtheorem{conjecture}[theorem]{Conjecture}
\theoremstyle{remark}
\newtheorem{remark}[theorem]{Remark}
\newcommand{\F}{\mathbb{F}}
\newcommand{\Z}{\mathbb{Z}}
\newcommand{\Q}{\mathbb{Q}}
\newcommand{\R}{\mathbb{R}}
\newcommand{\C}{\mathbb{C}}
\newcommand{\Cc}{\mathbb{C}^{\times}}
\newcommand{\cA}{\mathcal{A}}
\newcommand{\cB}{\mathcal{B}}
\newcommand{\cH}{\mathcal{H}}
\newcommand{\cO}{\mathcal{O}}
\newcommand{\til}{\tilde}
\newcommand{\ra}{\rightarrow}
\newcommand{\ira}{\hookrightarrow}
\newcommand{\sra}{\twoheadrightarrow}
\newcommand{\xra}{\xrightarrow}
\newcommand{\Lie}{\operatorname{Lie}}
\newcommand{\depth}{\operatorname{depth}}
\newcommand{\Ad}{\operatorname{Ad}}
\newcommand{\Irr}{\operatorname{Irr}}
\newcommand{\Ind}{\operatorname{Ind}}
\newcommand{\Res}{\operatorname{Res}}
\newcommand{\Tr}{\operatorname{Tr}}
\newcommand{\Nm}{\operatorname{Nm}}
\newcommand{\Rep}{\operatorname{Rep}}
\newcommand{\Gal}{\operatorname{Gal}}
\newcommand{\val}{\operatorname{val}}
\newcommand{\fg}{\mathfrak{g}}
\newcommand{\fh}{\mathfrak{h}}
\newcommand{\ft}{\mathfrak{t}}
\newcommand{\fm}{\mathfrak{m}}
\newcommand{\fa}{\mathfrak{a}}
\newcommand{\sS}{\mathsf{S}}
\newcommand{\sG}{\mathsf{G}}
\newcommand{\sP}{\mathsf{P}}
\newcommand{\sL}{\mathsf{L}}
\newcommand{\NN}{\mathcal{N}}
\newcommand{\matr}[1]{\left[\begin{matrix}#1\end{matrix}\right]}
\newcommand{\depthzerofirstentry}[1]{#1}
\newcommand{\dsp}{\displaystyle}
\newcommand{\Hom}{\operatorname{Hom}}
\newcommand{\Stab}{\operatorname{Stab}}
\newcommand{\ind}{\operatorname{ind}}
\newcommand{\im}{\operatorname{im}}
\newcommand{\SL}{\mathrm{SL}}
\newcommand{\GSP}{\mathrm{GSp}}
\newcommand{\SU}{\mathrm{SU}}
\newcommand{\DLr}{\operatorname{DL}_r}
\newcommand{\DLzero}{\operatorname{DL}_0}
\newcommand{\DLpi}{\operatorname{DL}_{\rho(\pi)}}
\newcommand{\quo}{/\hspace{-0.3em}/}
\newcommand{\MP}{\operatorname{MP}}  
\newcommand{\TPzero}{\operatorname{RP}_0} 
\newcommand{\TPr}{\operatorname{RP}_r} 
\DeclareMathOperator{\Int}{Int}
\DeclareMathOperator{\Group}{I}
\DeclareMathOperator{\changeFr}{\mathcal{F}}
\DeclareMathOperator{\res}{res}
\begin{document}

\title{Langlands parameters for Moy-Prasad types}

\author{Tsao-Hsien Chen}
\address{School of Mathematics, University of Minnesota, Vincent Hall, Minneapolis, MN-55455, USA}
\email{chenth@umn.edu}

\author{Stephen DeBacker}
\address{University of Michigan\\
Ann Arbor, MI 48109-1043, USA}
\email{stephendebacker@umich.edu}

\author{Cheng-Chiang Tsai}
\address{Institute of Mathematics, Academia Sinica, 6F, Astronomy-Mathematics Building, No. 1,
Sec. 4, Roosevelt Road, Taipei, Taiwan \vskip.2cm
also Department of Applied Mathematics, National Sun Yat-Sen University, and Department of Mathematics, National Taiwan University}

\email{chchtsai@gate.sinica.edu.tw}

\begin{abstract} 
Suppose $G$ is a tamely ramified $p$-adic reductive group.  
We construct
a partial local Langlands correspondence between  the set of 
irreducible smooth  representations of $G$  having depth $r$
and a certain set of $G^\vee$-conjugacy classes of continuous homomorphisms $\varphi:\Group_F^r\ra G^{\vee}$. Here   $G^\vee$ is the dual group of $G$, and  $\Group_F^r$ is the $r^{\text{th}}$ upper-numbering filtration subgroup of the inertia subgroup $\Group_F$.
\end{abstract}

\makeatletter
\let\@wraptoccontribs\wraptoccontribs
\makeatother

  \subjclass[2020]{20G25, 22E50}

  \date{\today}

\maketitle

\setcounter{tocdepth}{1}
\tableofcontents

\section*{Introduction}  
Suppose $F$ is a non-archimedean local field and $G$ is a connected reductive group that splits over a tame extension of $F$.
 Let $G^\vee$ denote the  dual group of $G$ and for $s \geq 0$ let $\Group_F^s$ denote the $s^{\text{th}}$ upper numbering filtration subgroup of the inertia subgroup $\Group_F$.

Suppose $(\pi,V)$ is an irreducible smooth complex representation of $G(F)$.  
Since $G$ splits over a tame extension,  the existence of  Moy-Prasad isomorphisms (see the discussion in Section~\ref{subsub:mpiso}) combined with the argument of~\cite[Thm. 5.2]{MP94} shows that either (i) $\pi$ has nontrivial fixed vectors under the pro-unipotent radical of some parahoric subgroup of $G(F)$ or (ii) there exists $r \in \Q_{>0}$ such that $\pi$ has a nondegenerate Moy-Prasad type of depth $r$.    In the former case we say that $\pi$ has depth zero and in the latter case we say that $\pi$ has positive depth $r$.  In either case we denote the depth of $\pi$ by $\rho(\pi)$.

 In this paper we construct a continuous homomorphism $\varphi_{\pi}  \colon \Group_F^{\rho(\pi)} \ra G^\vee$, which, up to $G^\vee$-conjugation,   depends only on $\pi$. (If $\rho(\pi)=0$ and $G$ does not split over an unramified extension, then see Definition \ref{def:toralzeromapa} for the precise object we construct.) We establish properties of the assignment $\pi \mapsto \varphi_\pi$ including the fact that the depth of $\varphi_\pi$ is less than or equal to $\rho(\pi)$.  Moreover, if $\rho(\pi) \in \Z_{(p)}:=\Q\cap\Z_p$, then the depth of $\varphi_\pi$ is $\rho(\pi)$.

\subsection*{A more detailed description}  
Let $\cO_F$ denote the ring of integers of $F$, $\fm_F$ the maximal ideal in $\cO_F$, and $k_F:=\cO_F/\fm_F$ the residue field. Denote by $\bar{F}$ a separable closure of $F$.  Whenever we say that ``$E/F$ is an extension''
we will mean that $E$ is a subextension of  $\bar{F}/F$. Let $p:=\operatorname{char}(k_F)$. We denote by $\val:\bar{F}\ra\Q\sqcup\{+\infty\}$ the valuation map, normalized so that $\val(F^{\times})=\Z$.
Fix  a nontrivial additive character $\Lambda = \Lambda_F:F\ra\Cc$.  We assume $\res_{\fm_F}\!\Lambda$, the restriction of $\Lambda$ to $\fm_F$, is trivial, but  $\res_{\cO_F}\!\Lambda$ is not trivial.  In this way we may also regard $\Lambda$ as a nontrivial character on $k_F$.

 For any extension $E/F$ we denote by $\cB(G,E)$ the reduced Bruhat-Tits building for $G$ over $E$. In Sections~\ref{sec:moy-prasadrecollections} and~\ref{sec:duallattice} we discuss some properties of the building, our notational conventions for the Moy-Prasad filtration lattices and subgroups, the properties of these lattices and subgroups with respect to base change, and our conventions for dual Lie algebras.

\subsubsection*{Moy-Prasad types}
For any $r\in\Q_{>0}$,
a \textbf{Moy-Prasad type of depth} $r$ \cite[\S5.1 \& \S3.8]{MP94} for $G(F)$ is
a pair $(x,X)$ where $x\in\cB(G,F)$ and $X\in\fg^*(F)_{x=-r}$. Here $\fg^*(F)_{x=-r}$ is the Moy-Prasad (sub)quotient of the dual Lie algebra.  We let $\MP_r$ denote the set of  Moy-Prasad types for $G(F)$ of depth $r$.   A pair $(x,X) \in \MP_r$ 
is said to be {\bf nondegenerate} provided that  $ X+\fg^*(F)_{x>-r}$ does not contain an element of the nilpotent cone $\NN^*$ (see Section~\ref{subsub:whereNis} for the definition of $\NN^*$).   

For any $(x,X) \in \MP_r$ consider the $1$-dimensional representation
\begin{equation}\label{eq:MPtypeongroup}
\rho_X:G(F)_{x=r}\ra\Cc,\;\;\rho_X(W)=\Lambda_F(\log(W),X)
\end{equation}
where $\log:G(F)_{x=r}\xra{\sim}\fg(F)_{x=r}$ is the Moy-Prasad isomorphism; see Section~\ref{subsub:mpiso}. We may and do view $\rho_X$ as a $1$-dimensional representation of $G(F)_{x\ge r}$ via the quotient map $G(F)_{x\ge r}\sra G(F)_{x=r}$.

 A \textbf{depth zero Moy-Prasad type} 
    is a pair $(\depthzerofirstentry{x},\chi)$ where $x\in \cB(G,F)$ and  $\chi$ is a cuspidal representation of $G(F)_{x=0}$
inflated to the parahoric subgroup $G(F)_{x\geq0}$.

An irreducible smooth representation $\pi$ of $G(F)$ is said to have $(x,X)$ (resp. $(\depthzerofirstentry{x}, \chi)$) as a Moy-Prasad type of depth $r >0$ (resp. $r = 0$) provided that  $\res_{G(F)_{x\ge r}}\!\pi$,  the restriction of $\pi$ to $G(F)_{x\ge r}$,  contains $\rho_X$ (resp. $\chi$).

\subsubsection*{Restricted  parameters}   
For any connected reductive group $H$ over any field (typically $F$), we denote by $H^{\vee}$ the (Langlands) dual connected reductive group over $\C$. 

We let $(\Group_F^r \, | \, r \geq 0)$ denote the upper numbering filtration of the Weil group $W_F$ of $F$ (see~\cite[Section 9 of Chapter 1]{cassels-frohlich} or~\cite[Section 3 of Chapter IV]{serre:local} or~Section~\ref{sec:nlcft}).  We  define $\Group_F^{r+}$ to be the closure of $\bigcup_{s>r}\Group_F^s$, so $\Group_F^{r+} = \displaystyle \lim_{\leftarrow} \Group(E/F)^{r+}$, where $\Group(E/F)^{r^+} = \bigcup_{s >r} \Group(E/F)^s$.
Note that $\Group_F^0$ is the inertia subgroup of $W_F$, and $\Group_F^{0+}$ is the wild inertia subgroup of $W_F$.   The group $\Group_F^r$ carries the subspace topology induced from the topology on $\Group_F$, and we equip $\Group_F^{r:r+}:= \Group_F^r/\Group_F^{r+}$ with the quotient topology.

Suppose $r \in \Q_{>0}$.  A continuous homomorphism $\varphi:\Group_F^{r:r+}\ra G^{\vee}$ or, alternatively, a  continuous homomorphism $\varphi:\Group_F^r\ra G^{\vee}$ that is trivial on $\Group_F^{r+}$ is a {\bf tame restricted depth-$r$ parameter} provided that  there exist a maximal torus $T^{\vee}\subset G^{\vee}$ and a continuous homomorphism $\til{\varphi}:\Group_F^{0+}\ra T^{\vee}$, trivial on $\Group_F^{r+}$, such that $\til{\varphi}|_{\Group_F^r}\equiv\varphi$. 
We let $\TPr$ denote the set of 
$G^{\vee}$-conjugacy classes
of tame restricted depth-$r$ parameters.

 A \textbf{restricted depth zero parameter} is a continuous cocycle
    \[(\varphi:\Group_F^{0:0+}\to G^\vee)\in Z^1(\Group_F^{0:0+},G^\vee)\]
    that admits an extension to a depth zero 
    Langlands parameter (see Definition~\ref{def:toralzeromapa}).
    We write 
    $\TPzero$ for the set of 
    $G^\vee$-conjugacy classes of 
    restricted depth zero parameters.

\subsubsection*{Statement of the main result}
    For $s \in \Q_{\geq 0}$ we let $\Irr(G(F))_s$ denote the set of (equivalence classes of) irreducible representations of $G(F)$ of depth $s$.
Most of this article is concerned with constructing
and establishing properties of the following map:

\begin{theorem} \label{thm:mainthm} For $r \in \Q_{\geq 0}$ such that $\Irr(G(F))_r \neq \emptyset$  there is a map
\[
\Irr(G(F))_r \ra \TPr.
\] 
Moreover, if we also have $r \in \Z_{(p)} \cap \Q_{>0}$, then every element of the image of this map is nontrivial. 
\end{theorem}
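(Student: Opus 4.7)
The plan is to construct $\varphi_\pi$ by extracting from $\pi$ a Moy-Prasad type, reducing its Lie-algebra datum to a semisimple element, and applying local class field theory to the torus associated to that element. When $r = 0$, a different (Deligne-Lusztig-theoretic) construction is used, and this is the content of Definition~\ref{def:toralzeromapa}; what follows addresses the case $r > 0$.

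Fix a nondegenerate Moy-Prasad type $(x, X) \in \MP_r$ contained in $\pi$; its existence is part of the positive-depth Moy-Prasad classification. Using nondegeneracy, represent $X$ (possibly after passing to a tame finite extension $E/F$) by a semisimple element $\dot X \in \ft^*(E)$ whose centralizer $T \subset G_E$ is a maximal torus. The pair $(T, \dot X)$ yields, via the Moy-Prasad isomorphism and the additive character $\Lambda$, a character $\chi_{\dot X}$ of $T(E)_{x=r}$. Extend $\chi_{\dot X}$ (non-uniquely) to $T(E)$ and apply local class field theory for the torus $T$ to obtain a continuous homomorphism $\tilde\varphi: W_E \to T^\vee$. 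The compatibility of LCFT with the Moy-Prasad and upper-numbering filtrations guarantees that $\tilde\varphi|_{\Group_F^r}$ is trivial on $\Group_F^{r+}$. Composing with an embedding $T^\vee \hookrightarrow G^\vee$, and using Galois descent to realize the resulting $G^\vee$-conjugacy class over $F$, produces the desired map $\varphi_\pi: \Group_F^{r:r+} \to G^\vee$. Since by construction $\varphi_\pi$ extends to a torus-valued homomorphism on $\Group_F^{0+}$ that is trivial on $\Group_F^{r+}$, it is a tame restricted depth-$r$ parameter in the sense of the text.

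The main obstacle is independence of choices: distinct Moy-Prasad types for $\pi$ and distinct choices made within the construction should yield $G^\vee$-conjugate parameters. For a fixed $(x, X)$, any two semisimple lifts $\dot X$ differ by conjugation by an element of a compact open subgroup, and any two extensions of $\chi_{\dot X}$ to $T(E)$ differ only in depth higher than $r$, so neither modification affects the restriction of $\tilde\varphi$ to $\Group_F^r$ up to $G^\vee$-conjugation. Independence of the Moy-Prasad type $(x,X)$ itself is the deepest point; it will follow from the fact that two Moy-Prasad types occurring in a common $\pi$ are related by the Moy-Prasad associate-class relation, which translates into $G^\vee$-conjugacy under the torus-character construction described above. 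Finally, when $r \in \Z_{(p)} \cap \Q_{>0}$, nondegeneracy of $X$ forces $\dot X$ to be nonzero modulo depth greater than $-r$, so $\chi_{\dot X}$ is nontrivial on $T(E)_{x=r}$; since breaks of tame characters of $W_F$ lie in $\Z_{(p)}$ (Hasse-Arf), this nontriviality passes through LCFT and through the embedding $T^\vee \hookrightarrow G^\vee$, yielding a nontrivial $\varphi_\pi$ on $\Group_F^{r:r+}$.
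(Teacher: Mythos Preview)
Your overall strategy matches the paper's, but two of your steps fail as written. First, when $r \notin \Z_{(p)}$ no tame extension $E/F$ has $r \cdot e(E/F) \in \Z$, so there is no element of valuation $r$ over a tame extension with which to rescale $X$ to depth zero; the paper must allow wildly ramified ``$(X,r,T)$-adapted'' extensions (Section~\ref{sec:constructiontametori}) and then prove independence of that wild choice via a norm/trace compatibility (Lemmas~\ref{lem:norm}, \ref{lem:LCFT}, Corollary~\ref{cor:base}), which your ``Galois descent'' sentence glosses over. Second, a nondegenerate $X$ need not admit a lift $\dot X$ whose centralizer is a \emph{maximal} torus---in general it is only a Levi---so your torus $T$ is not determined by $\dot X$. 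The paper sidesteps both problems by never choosing a lift or a variable torus: it sends $X$ to the GIT quotient $\bar{\fa}^* \quo W$ via the Chevalley-type map~\eqref{eq:dualChnew}, packaging the result as a depth-$r$ Deligne-Lusztig parameter $\iota_x(X)\in\DLr$ (Definition~\ref{def:DL}), and only afterwards chooses a preimage in $\fa^*(E)_{=-r}$ for the \emph{fixed} torus $A$ and applies the torus construction (Section~\ref{sec:restrictedtoralhoms}); ambiguity by $W$ then visibly becomes $G^\vee$-conjugacy (Lemma~\ref{lem:welldefinedoniota}).

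You are right that independence of the Moy-Prasad type is the deepest point. In the paper this is Lemma~\ref{lem:main1new}, and its proof genuinely uses the $\DLr$ formulation: two associate types are compared by scaling to depth zero and walking along a geodesic in an apartment, invoking the Chevalley isomorphism at each intermediate facet. It is not clear how to run this argument directly at the level of your variable centralizer tori without first passing to $\bar{\fa}^*\quo W$. Your nontriviality argument for $r\in\Z_{(p)}$ is essentially correct---depth preservation for tame tori~\cite{Yu09} is the relevant input---though the paper argues it as an equivalence between nondegeneracy of $(x,X)$ and nontriviality of $\iota_x(X)$ (Lemma~\ref{lem:nonzeronewnew}); Hasse--Arf is not needed.
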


Suppose $\pi \in \Irr(G(F))_r$. When $r$ is positive, the proof has two steps.  Suppose $(x,X)$ is a nondegenerate Moy-Prasad type for $\pi$. In Section~\ref{sec:MPtypetoDLparam}  we associate to the pair $(x,X)$ an equivalence class of pairs $(\alpha, Y)$, called a {\bf depth-$r$ Deligne-Lusztig parameter},
where $\alpha \in \bar{F}$ such that $\val(\alpha) = r$ and $Y$ is a measure of the semisimple part of $X$ ``transported by $\alpha$'' to $\fg^*(\bar{F})_{x=0}$ (see Definition~\ref{def:DL}).  Then, in Section~\ref{sec:C2Bnew} we construct from the  pair $(\alpha,Y)$   a $G^\vee$-conjugacy class of restricted depth-$r$ parameters.   The resulting map from $\Irr(G(F))_r$ to $\TPr$   is independent of all choices.     The structure of the construction in the depth zero case  is very similar.
See Lemmas~\ref{lem:pfinpos} and~\ref{lem:pfinzero} for the proof of Theorem~\ref{thm:mainthm}.

\subsection*{Structure of this paper}  In Section~\ref{sec:nlcft} we introduce a normalized Hasse-Herbrand function and establish some facts about ramification groups that are needed in the sequel, especially when $r\not\in\Z_{(p)}$.
In Sections~\ref{sec:constructiontametori} through~\ref{sec:conjnew} we prove and discuss Theorem~\ref{thm:mainthm} for positive depth representations.
In Section~\ref{sec:constructiontametori}  we provide a preliminary  construction of the map of Theorem~\ref{thm:mainthm} in the case when $G$ is a tamely ramified
torus. In Section~\ref{sec:MPtypetoDLparam} we attach to any associativity class of depth-$r$ Moy-Prasad types  a  depth-$r$ Deligne-Lusztig parameter. We then construct  in Section~\ref{sec:C2Bnew} a bijection between the set of depth-$r$ Deligne-Lusztig parameters and $ \TPr$, proving Theorem~\ref{thm:mainthm}.
In Section~\ref{sec:conjnew} we conjecture that the (usual) Local Langlands Correspondence is an extension of our construction, and prove this conjecture for the Local Langlands Correspondence constructed by Kaletha \cite{Kal19,Kal19b} for semisimple supercuspidal representations.  In Section~\ref{sec:depthzero} we, broadly speaking, follow the outline sketched above for positive depth representations to prove Theorem~\ref{thm:mainthm} for depth zero representations.

\section{Notation and conventions}\label{sec:notation}
Recall that $G$ is a connected reductive $F$-group that splits over a tame extension of $F$.

 Let $F^t$ be the maximal tame extension of $F$ in $\bar{F}$, and denote by $F^u$ the maximal unramified extension of $F$ in $F^t$. Denote by $\bar{k}:=k_{F^u}=k_{F^t}= k_{\bar{F}}$ their algebraically closed residue field.    Whenever we say that ``$E/F$ is a tame extension''
we will mean that $E$ is a subextension of  $F^t/F$. For any  extension $E/F$ we have the corresponding objects $\cO_E, \fm_E$, $k_E$, $E^u$, $E^t$, etc.

We fix a maximal $F$-torus $A$ in $G$ for which
(a) $A^u$, the maximal $F^u$-split subtorus  of $A$, is a maximal $F^u$-split $F$-torus in $G$ and (b)
 $A^F$, the maximal $F$-split subtorus of $A$, is a maximal $F$-split torus in $G$.
Since $G$ is quasi-split over $F^{u}$, the centralizer of $A^u$ in $G$ is $A$; that is
$A=Z_G(A^u)$.   It follows from~\cite[Theorem~6.1]{prasad:unramified} (or from~\cite[Theorem~3.4.1]{debacker:maximalunramified}  by looking at pairs of the form $(F,{\sf T}) \in I^m$  with $F$ an alcove) that $A$
exists and is unique up to  $G(F)$-conjugacy.  

\subsection{Buildings, apartments, Moy-Prasad filtrations}  \label{sec:moy-prasadrecollections}  Because we are going to need to work over finite extensions of $F$, we present this material from the point of view of an extension $E$ of $F$.

 For any  extension $E/F$ of finite ramification index we let $\cB(G,E)$ denote the  reduced Bruhat-Tits building of $G(E)$.   For any finite extension $E'/E$ we have $\cB(G,E) \subset \cB(G,E')$, and if $E'/E$ is Galois and tame we have a canonical bijection $\cB(G,E)=\cB(G,E')^{\Gal(E'/E)}$ (\cite{Rou77} or \cite{Pra01}). 
This allows us to conveniently define $\cB(G,E^t)$ to be the direct limit (=union) of all $\cB(G,E')$ for $E'/E$ finite, Galois, and tame (by convention we assume $E' \leq E^t$).

 We let $\cA$ denote the apartment of $A^F$  in $\cB(G,F) \subset \cB(G,E)$, and we let $\tilde{\cA}$ denote the apartment of $A^u$ in $\cB(G,F^u) \subset \cB(G,E^u)$.    Any apartment in $\cB(G,F)$ is $G(F)$-conjugate to $\cA$.  

 \subsubsection{Moy-Prasad filtrations} 
For any $x\in\cB(G,E)$ and  $r\in\R_{\ge 0}$ we have
the Moy-Prasad filtration subgroup $G(E)_{x\ge r}\subset G(E)$;  in \cite[\S2.6]{MP94} the subgroup $G(F)_{x \ge r}$ is  denoted $P_{x,r}$.  We have that $G(E)_{x\ge 0}$ is the parahoric subgroup attached to $x$ and 
for any $s \geq r \geq 0$
we have that $G(E)_{x\ge s}$
is an open normal subgroup of $G(E)_{x \ge r}$.
 We define $G(E)_{x>r}:=\bigcup_{s>r} G(E)_{x\ge s}$, and $G(E)_{x=r}:=G(E)_{x\ge r}/G(E)_{x>r}$. Likewise, for any $r\in\R$ there is the Moy-Prasad filtration sub-lattice $\fg(E)_{x\ge r}\subset\fg(E)$ as well as $\fg(E)_{x>r}\subset\fg(E)$ and the quotient $\fg(E)_{x=r}:=\fg(E)_{x\ge r}/\fg(E)_{x>r}$; 
 in \cite[\S3.2]{MP94} the lattice $\fg(F)_{x \ge r}$ is denoted $\fg_{x,r}$. 
 Note that $\fg(E)_{x\ge r}$ is an $\cO_E$-module and $\fg(E)_{x=r}$ is a $k_E$-vector space. 
 
\subsubsection{Moy-Prasad isomorphisms}  \label{subsub:mpiso}
 Since $G$ splits over $F^t \leq E^t$, the torus $A$ is a weakly induced torus~\cite[Definitions~B.6.2 and~2.5.1]{KP23}.   Thanks to~\cite[Theorem~13.5.1]{KP23} and its proof, 
 we have a Moy-Prasad isomorphism 
\[\log_E \colon G(E^u)_{x=r}\cong\fg(E^u)_{x=r}\]
 for all 
$x \in \cA$ and $r \in \R_{>0}$, hence for all 
$x \in \cB(G,F)$ and $r \in \R_{>0}$.   This isomorphism is $\Gal(E^u/E)$-equivariant.  Indeed, since $x \in \cB(G,F)$, from~\cite[Theorem~13.5.1(2)]{KP23}   it is compatible with $\log = \log_F \colon G(F^u)_{x=r}\cong\fg(F^u)_{x=r}$.   Note that in \cite[Section 13.5]{KP23} tori are assumed to carry the minimal congruence filtration of~\cite[Section 5]{Yu15}.  However, for weakly induced tori we know~\cite[Corollary~B.10.13]{KP23} that the standard filtration~\cite[Definition~B.5.1]{KP23} of $A(E^u)$ agrees with the minimal congruence filtration of $A(E^u)$.  We remark that the standard filtration is what is used in~\cite{MP94} (where they denote the torus $A$ as $Z$).

\subsection{The dual of the Lie algebra}  \label{sec:duallattice}
We put $\fg^*(F) :=\Hom_F(\fg(F),F)$ the dual vector space; it has the functoriality that $\fg^*(E):=\Hom_E(\fg(E),E)$ is canonically identified with $\fg^*(F)\otimes_FE$ where $E$ is a finite  extension of $F$.  In particular we have a natural embedding $\fg^*(F)\ira\fg^*(E)$. If $T\subset G$ is a maximal torus defined over $E$, then 
$\fg^*=\ft^*\oplus(\ft^*)^{\perp}$ where $(\ft^*)^{\perp}$ is the subspace of $\fg^*$ spanned by the root spaces in $\fg^*$ for the roots of $G$ with respect to $T$.  In particular we will identify $\ft^*$ with the $T$-fixed subspace of $\fg^*$. 
We define Moy-Prasad filtration lattices and quotients
\[
\begin{array}{l}
\fg^*(E)_{x\ge -r}:=\{\lambda \in\fg^*(E)\;|\;\lambda (X)\in\fm_E,\;\forall X\in\fg(E)_{x>r}\}\\
\fg^*(E)_{x>-r}:=\bigcup_{s>-r}\fg^*(E)_{x\ge s}=\{\lambda\in\fg^*(E)\;|\;\lambda(X)\in\fm_E,\;\forall X\in\fg(E)_{x\ge r}\}\\
\fg^*(E)_{x=-r}=\fg^*(E)_{x\ge -r}/\fg^*(E)_{x>-r}.
\end{array}
\]
By definition we have a perfect pairing
\[
\fg(E)_{x=r}\times\fg^*(E)_{x=-r}\ra k_E.
\]

For the remainder of this subsection, we assume that $E'/E$ is tame or that $G$ is $E$-split.  This allows us to use~\cite[Lemma 2.3.5(3)]{Spi21}.  We have
\[G(E)_{x\ge r}=G(E)\cap G(E')_{x\ge r} \text{ (for }r>0\text{)}\text{ and } \fg(E)_{x\ge r}=\fg(E)\cap \fg(E')_{x\ge r}.\]

The above identities induce canonical embeddings $G(E)_{x=r}\ira G(E')_{x=r}$ and $\fg(E)_{x=r}\ira\fg(E')_{x=r}$. Therefore for any $E'/E$ tame (and possibly infinite) we can  define $G(E')_{x\ge r}$, $\fg(E')_{x\ge r}$, $G(E')_{x=r}$, and $\fg(E')_{x\ge r}$ in terms of direct limits.    We  also have 
\[
\fg^*(E)_{x\ge -r}=\fg^*(E)\cap \fg^*(E')_{x\ge -r} \text{ and }\fg^*(E)_{x=-r}\ira\fg^*(E')_{x=-r}.
\]
Moreover, when $E'/E$ is also Galois, we have a surjection $\Gal(E'/E)\sra\Gal(k_{E'}/k_E)$ and the action $\Gal(E'/E)\curvearrowright\fg(E')_{x\ge r}$ induces an action $\Gal(k_{E'}/k_E)\curvearrowright\fg(E')_{x=r}$ so that we have natural identifications
\[
G(E)_{x\ge r}=\left(G(E')_{x\ge r}\right)^{\Gal(E'/E)},\;\;\fg(E)_{x\ge r}=\left(\fg(E')_{x\ge r}\right)^{\Gal(E'/E)},\]
\[
\fg(E)_{x=r}=\left(\fg(E')_{x=r}\right)^{\Gal(k_{E'}/k_E)}, \text{ and }\fg^*(E)_{x=-r}=\left(\fg^*(E')_{x=-r}\right)^{\Gal(k_{E'}/k_E)}.
\]

When $G=T$ is a torus, $\cB(G,E)$ consists of only one point, call it $x$, and we suppress $T(E)_{x\ge r}$ to $T(E)_{\ge r}$, $\ft(E)_{x=r}$ to $\ft(E)_{=r}$, and so on.

Finally, if ${Y} \in \fg^*(E)_{x \geq r}$, then we let $\bar{Y}$ denote the image of $Y$ in $ \fg^*(E)_{x = r}$.   Similarly, if $\bar{X} \in \fg^*(E)_{x = r}$, then we let ${X} \in \fg^*(E)_{x \geq r}$ denote a lift of $\bar{X}$.  We use an analogous convention  for elements of $\fg(E)$ and $G(E)$.   We also often denote a lift of $U \in \fg^*(E)_{x = r}$ by $U_0 \in \fg^*(E)_{x \geq r}$.

\section{Trace, norm, and ramification groups} \label{sec:nlcft}

In the standard treatment of ramification  groups, the valuation is chosen so that the lower numbering subgroups are indexed by integers.  Here, we do everything relative to the unique valuation on $\bar{F}$ for which $\val(F^\times) = \Z$.    More details about this normalized approach to defining the upper and lower numbering subgroups of local class field theory may be found in the expository paper~\cite{DST25}.

If $K,L$ are finite extensions of $F$ with $K \leq L$, then $e(L/K)$ denotes the ramification degree of $L/K$.  If $L/K$ is Galois, then $\Gal(L/K)$ denotes the Galois group and $\Group(L/K)$ denotes the inertia subgroup.

Fix a finite extension  $E$ of $F$, and suppose $L/E$ is a finite Galois extension with uniformizer $\varpi_L$.
For $\sigma \in \Group(L/E)$
     set
\[\depth_{L/E}(\sigma) := \val \left( \dfrac{ \sigma(\varpi_L) - \varpi_L}{\varpi_L} \right).\]
This definition is independent of the choice of $\varpi_L$.  For  $r \in \R_{\geq 0}$  define the lower numbering ramification groups by
 \[\Group(L/E)_r := \{ \sigma \in \Group(L/E) \, : \, \depth_{L/E}(\sigma) \geq r \}. \]
For $i \geq 0$, the group $G_i$ of ~\cite[IV.2]{serre:local} is the group $\Group(L/E)_{i/e(L/F)}$.  We set $\ell(L/E) = \inf \{r \in \R_{\geq 0} \, | \, \Group(L/E)_r = 1 \}$.  This is  the last break for the lower numbering subgroups.

 We define normalized versions of the Hasse-Herbrand functions by 
\[\varphi_{L/E}(x) = \int_0^x \left| {\Group(L/E)_y} \right|\, dy = \sum_{\sigma \in \Group(L/E)} \min(\depth_{L/E} (\sigma), x) \]
and $\psi_{L/E}(z) = \varphi_{L/E}^{-1}(z)$ for $z \in \R_{\geq 0}$.  The standard, unnormalized version of these functions are given by $x \mapsto e(E/F) \cdot \varphi_{L/E}(x/e(L/F))$ and $y \mapsto e(L/F) \cdot \psi_{L/E}(y/e(E/F))$.  The normalized versions of these functions enjoy the usual composition rules: \(
\varphi_{L/E} = \varphi_{K/E} \circ \varphi_{L/K}\) and \(\psi_{L/E} = \psi_{L/K}\circ\psi_{K/E}\).  Note that the derivative of $\varphi_{L/E}$ is $1$ on $(\ell(L/E), \infty)$.  Thus, if  $L/E$ is tamely ramified, then we have $\ell(L/E) = 0$ and so $\psi_{L/E}(s) = \varphi_{L/E}(s) = s$ 
for all $s \in \R$.

The upper numbering subgroups are defined  by $\Group(L/E)^s := \Group(L/E)_{\psi_{L/E}(s)}$, and when $F = E$ this indexing agrees with the standard, unnormalized upper numbering of ramification groups. 
 We set $u(L/E) = \inf \{r \in \R_{\geq 0} \, | \, \Group(L/E)^r = 1 \}$.  This is  the last break for the upper numbering subgroups.  Note that $u(L/E) = \varphi_{L/E}(\ell(L/E))$.

If $K/E$ is a Galois extension and $K \leq L$, then for all $r, s \geq 0$ there are exact sequences
\begin{equation} \label{eq:exactnumberlower}
1 \ra \Group(L/K)_r \ra \Group(L/E)_r\ra \Group(K/E)_{\varphi_{L/K}(r)} \ra 1
\end{equation}
and
\begin{equation} \label{eq:exactnumberupper}
1 \ra   \Group(L/K)^{\psi_{K/E}(s)} \ra  \Group(L/E)^s \ra  \Group(K/E)^s \ra 1
\end{equation}

\subsection{A different different}  \label{sec:di9fferentdifferent}
For any finite separable extension $K/F$, we define $\Lambda_K:=\Lambda_F\circ\Tr_{K/F}:K\ra\Cc$. Since $\Tr_{K/F}:K\ra F$ is surjective, we have that $\Lambda_K$ is also a nontrivial continuous additive character. 
Define
\[c_{\Lambda_K}:=\max\{ \val(x)\,|\, x\in K,\;\Lambda_K(x)\not=1\}.
\]
Note that $c_{\Lambda_F} = 0$.

\begin{definition}   \label{defn:cdefined}
    Suppose $F \leq E \leq L$ is a tower of finite separable extensions.  Set
    \[c_{L/E}:=c_{\Lambda_E}-c_{\Lambda_L}\]
    and
    \[ \Lambda_{L/E} := \Lambda_E \circ \Tr_{L/E}.\]
\end{definition}

\begin{remark}
Note that $\Tr_{L/E}[L_{\geq s}] = E_{\geq s + c_{L/E}}$.
\end{remark}

\begin{remark} If $F \leq E \leq K \leq L$ is a tower of separable extensions, then
\( c_{L/E} = c_{L/K} + c_{K/E}\).
\end{remark}

\begin{lemma}\label{lem:cdifferentformula} Suppose $F \leq E \leq L$ is a tower of finite Galois extensions. We have
\[c_{L/E} = \sum_{1 \neq \sigma \in \Group(L/E)} \depth_{L/E}(\sigma).
\]
\end{lemma}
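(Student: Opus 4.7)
The plan is to show that both sides of the identity equal $\val(\mathfrak{d}_{L/E}) - (e(L/E)-1)/e(L/F)$ in the normalized valuation, where $\mathfrak{d}_{L/E}$ is the different of $L/E$.

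For the right-hand side, I would rewrite each summand as $\depth_{L/E}(\sigma) = \val(\sigma\varpi_L - \varpi_L) - 1/e(L/F)$ and sum over $\sigma \neq 1$ to obtain
\[
\sum_{1 \neq \sigma \in \Group(L/E)} \depth_{L/E}(\sigma) = \Bigl( \sum_{1 \neq \sigma \in \Group(L/E)} \val(\sigma\varpi_L - \varpi_L) \Bigr) - \frac{e(L/E) - 1}{e(L/F)},
\]
using $|\Group(L/E)| = e(L/E)$. Letting $L^0 := L^{\Group(L/E)}$ be the maximal unramified subextension of $L/E$, the extension $L/L^0$ is totally ramified Galois with $\mathcal{O}_L = \mathcal{O}_{L^0}[\varpi_L]$, so the classical different formula (Serre, \emph{Local Fields}, IV.1, Prop.~4) gives $\val_L(\mathfrak{d}_{L/L^0}) = \sum_{\sigma \neq 1}\val_L(\sigma\varpi_L - \varpi_L)$. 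Since $L^0/E$ is unramified we have $\val(\mathfrak{d}_{L/E}) = \val(\mathfrak{d}_{L/L^0})$, so after converting to normalized valuation the right-hand side becomes $\val(\mathfrak{d}_{L/E}) - (e(L/E)-1)/e(L/F)$.

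For the left-hand side, I would first compute $c_{\Lambda_K}$ for a general finite separable extension $K/F$. The classical trace-different duality gives $\Tr_{K/F}(K_{\geq s}) = F_{\geq \lfloor s + \val(\mathfrak{d}_{K/F}) \rfloor}$ for $s \in \val(K^\times)$. Combined with the hypothesis $c_{\Lambda_F} = 0$ (equivalently, $\Lambda_F$ is nontrivial on $F_{\geq t}$ iff $t \leq 0$), this shows that $\Lambda_K = \Lambda_F \circ \Tr_{K/F}$ is nontrivial on $K_{\geq s}$ if and only if $\lfloor s + \val(\mathfrak{d}_{K/F}) \rfloor \leq 0$, i.e., $s < 1 - \val(\mathfrak{d}_{K/F})$. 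Since $1 - \val(\mathfrak{d}_{K/F}) \in \val(K^\times)$, this yields
\[
c_{\Lambda_K} = 1 - \val(\mathfrak{d}_{K/F}) - 1/e(K/F).
\]
Applying this to $K = E$ and $K = L$, subtracting, and using the tower formula $\val(\mathfrak{d}_{L/F}) = \val(\mathfrak{d}_{L/E}) + \val(\mathfrak{d}_{E/F})$, I would obtain
\[
c_{L/E} = c_{\Lambda_E} - c_{\Lambda_L} = \val(\mathfrak{d}_{L/E}) + \frac{1}{e(L/F)} - \frac{1}{e(E/F)} = \val(\mathfrak{d}_{L/E}) - \frac{e(L/E) - 1}{e(L/F)},
\]
matching the right-hand side.

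The main subtle point will be the computation of $c_{\Lambda_K}$: I must verify that it is determined by the valuation-conductor $c_{\Lambda_F} = 0$ alone and not by the finer structure of $\ker\Lambda_F$. This is handled by the observation that $\Tr_{K/F}(K_{\geq s})$ is always a full standard filtration piece of $F$, so nontriviality of $\Lambda_F$ on it is controlled purely by whether the threshold exceeds $c_{\Lambda_F}$.
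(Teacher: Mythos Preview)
Your proof is correct and follows essentially the same approach as the paper: both arguments show that each side equals $\val(\mathfrak{d}_{L/E}) - (e(L/E)-1)/e(L/F)$, using trace--different duality for the left side and Serre's formula for the different in terms of ramification groups for the right. The only organizational difference is that the paper computes $c_{\Lambda_L}$ in terms of $c_{\Lambda_E}$ directly via $\Tr_{L/E}$ (citing Weil, VIII.1, Prop.~4), whereas you compute $c_{\Lambda_E}$ and $c_{\Lambda_L}$ separately via $\Tr_{E/F}$ and $\Tr_{L/F}$ and then subtract using the tower formula for the different.
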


\begin{proof}
Define the normalized differental exponent by
 \[ d = d(L/E):= \max \{r \, | \, e(L/F)r \in \Z \text{ and } \Tr_{L/E}[L_{\geq -r}] \subset E_{\geq 0} \}.\]
From~\cite[VIII.1, Proposition~4]{We95} 
if $s$ satisfies
    \[ c_{\Lambda_E} -d - 1/e(L/F) < s \leq c_{\Lambda_E} -d + (e(L/E)-1)/e(L/F),\]
    then \(\Tr_{L/E}[L_s]= E_{c_{\Lambda_E}}\), 
    and if 
    \[t > c_{\Lambda_E}  -d + (e(L/E)-1)/e(L/F) = (c_{\Lambda_E} + 1/e(E/F)) -d - 1/e(L/F),\]
    then \(\Tr_{L/E}[L_t]\subset  E_{c_{\Lambda_E} + 1/e(E/F)} = E_{c_{\Lambda_E}+} \).
    Since $\Tr_{L/E}$ is $\cO_E$-linear,
   we conclude that $c_{\Lambda_L} = c_{\Lambda_E} - d + (e(L/E)- 1)/e(L/F)$.    Thus, 
 \[c_{L/E} = d(L/E)  + (1-e(L/E))/e(L/F).
\]
   According to~\cite[IV.2 Proposition~4]{serre:local}, this is
\(\sum_{1 \neq \sigma \in \Group(L/E)} \depth_{L/E}(\sigma) \).
\end{proof}

\begin{corollary}  \label{cor:cdifferentandHH}
  If $s \in \R_{\geq 0}$ satisfies $s \geq \ell(L/E)$, then $\varphi_{L/E}(s) = s + c_{L/E}$.    In particular, $c_{L/E} = u(L/E) - \ell(L/E)$.
\end{corollary}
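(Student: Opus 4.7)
The plan is to evaluate the sum formula
\[\varphi_{L/E}(s) = \sum_{\sigma \in \Group(L/E)} \min(\depth_{L/E}(\sigma), s)\]
directly, using the hypothesis $s \geq \ell(L/E)$ to simplify each summand, and then invoke Lemma~\ref{lem:cdifferentformula}.

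First I would establish the key auxiliary fact: for every $\sigma \in \Group(L/E)$ with $\sigma \neq 1$, we have $\depth_{L/E}(\sigma) \leq \ell(L/E)$. This follows from the definition of $\ell(L/E)$ as an infimum: if $\depth_{L/E}(\sigma) > \ell(L/E)$, then there would exist $r$ with $\ell(L/E) < r \leq \depth_{L/E}(\sigma)$ and $\Group(L/E)_r = 1$, yet $\sigma \in \Group(L/E)_r$, a contradiction. On the other hand, $\depth_{L/E}(1) = +\infty$ by convention (since $1$ fixes $\varpi_L$).

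Now for $s \geq \ell(L/E)$, the previous paragraph shows $\min(\depth_{L/E}(\sigma), s) = \depth_{L/E}(\sigma)$ for $\sigma \neq 1$ and $\min(\depth_{L/E}(1), s) = s$. Therefore
\[\varphi_{L/E}(s) = s + \sum_{1 \neq \sigma \in \Group(L/E)} \depth_{L/E}(\sigma),\]
which equals $s + c_{L/E}$ by Lemma~\ref{lem:cdifferentformula}. For the ``in particular'' statement, substitute $s = \ell(L/E)$ and use the identity $u(L/E) = \varphi_{L/E}(\ell(L/E))$ recorded just after the definition of $u(L/E)$ in the excerpt; this gives $u(L/E) = \ell(L/E) + c_{L/E}$, which rearranges to $c_{L/E} = u(L/E) - \ell(L/E)$.

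There is no serious obstacle here: the argument is essentially a bookkeeping exercise combining the sum formula for $\varphi_{L/E}$, the interpretation of $\ell(L/E)$ as a sharp upper bound on the depths of non-identity elements, and the already-proven Lemma~\ref{lem:cdifferentformula}. The only minor subtlety worth stating carefully is the inequality $\depth_{L/E}(\sigma) \leq \ell(L/E)$ for $\sigma \neq 1$, which relies on the fact that $\Group(L/E)_r$ is defined with a closed (``$\geq r$'') condition on depth.
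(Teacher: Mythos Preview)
Your proof is correct and follows essentially the same approach as the paper: both evaluate the sum formula for $\varphi_{L/E}(s)$, use $s \geq \ell(L/E)$ to reduce $\min(\depth_{L/E}(\sigma),s)$ to $\depth_{L/E}(\sigma)$ for $\sigma \neq 1$ and to $s$ for $\sigma = 1$, and then invoke Lemma~\ref{lem:cdifferentformula}. You have simply made explicit the inequality $\depth_{L/E}(\sigma) \leq \ell(L/E)$ for $\sigma \neq 1$, which the paper leaves to the reader.
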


\begin{proof}
Using the definition of $\varphi_{E/F}$, for $s \geq \ell(L/E)$  we have
\begin{equation*}
    \begin{split}
      \varphi_{L/E}(s) -s &=  - s + \sum_{\sigma \in \Group(L/E)} \min(\depth_{L/E} (\sigma), s)  
      =  \sum_{\sigma \in \Group(L/E) \setminus \{1\} } \depth_{L/E} (\sigma).  
    \end{split}
\end{equation*}
If $s = \ell(L/E)$, then $c_{L/E} = \varphi_{L/E}(\ell(L/E)) - \ell(L/E) = u(L/E) - \ell(L/E)$.
\end{proof}

\subsection{Some results about norm maps}

If $L/E$ is a finite Galois extension, then for all $s \in \R$ we have
\[ \Tr_{L/E}[L_{\geq s}] = E_{\geq (s + c_{L/E})} .\]
Since this is true for all $s$, for all $t \in \R$ we have a surjection, also denoted $\Tr_{L/E}$,
\[  \Tr_{L/E} \colon L_{= t} \sra E_{=(t + c_{L/E})}. \]

There are similar, but more subtle, results for the norm map $\Nm_{L/E} \colon L^\times \ra E^\times$.  For example, thanks to~\cite[V.2~Proposition 2 and V.6~Proposition 8]{serre:local}  for all $s \in \R_{>0}$ we have
\[ \Nm_{L/E}[L^{\times}_{\geq s}] =\Nm_{\tilde{L}/E}[\Nm_{L/\tilde{L}}[L^{\times}_{\geq s}]] \subset \Nm_{\tilde{L}/E}[\tilde{L}^{\times}_{\geq \varphi_{L/\tilde{L}}(s)} ] = E^{\times}_{\geq \varphi_{L/\tilde{L}}(s)}\]
where $\tilde{L}/E$ denotes the maximal unramified subextension of $E$ in $L$. 
Since $\varphi_{\tilde{L}/E}(t) = t$ for all $t \geq 0$, we conclude that $\Nm_{L/E}[L^\times_{\geq s}] \subset E^\times_{\geq \varphi_{L/E}(s)}$.  Since this is true for all $s \in \R_{>0}$,  we have a homomorphism, also denoted $\Nm_{L/E}$,
\[  \Nm_{L/E} \colon L^\times_{= s} \longrightarrow E^\times_{=\varphi_{L/E}(s)}. \]

\begin{lemma} \label{lem:normrange}
   Suppose $s \geq 0$.
   \begin{enumerate}
       \item \label{it:norm1} The homomorphism 
    \[  \Nm_{L/E} \colon L^\times_{\geq s} \longrightarrow E^\times_{\geq \varphi_{L/E}(s)}\]
    is surjective if and only if (a) $L/E$ is unramified and $s \geq 0$ or (b) $\Group(L/E) \neq 1$ and $s > \ell(L/E)$.
    \item \label{it:norm2}  Suppose  $s > \ell(L/E)$. If $x \in L_{\geq s}$, then $\Nm_{L/E}(1+x) = 1 +\Tr_{L/E}(x)$ modulo $E^\times_{> \varphi_{L/E}(s)}$.  Since   we know from Corollary~\ref{cor:cdifferentandHH} that $s > \ell(L/E)$ implies $\varphi_{L/E}(s) = s + c_{L/E}$,  we have the following commutative diagram
\[
\begin{tikzcd}
L_{=s}\arrow[rr, "\Tr_{L/E}"]\arrow[d, "\rotatebox{90}{$\sim$}"]& &E_{=(s+c_{L/E})}\arrow[d, "\rotatebox{90}{$\sim$}"]\\
L^{\times}_{=s}\arrow[rr, "\Nm_{L/E}"]& &E^{\times}_{=(s + c_{L/E})}
\end{tikzcd}
\]
 in which the vertical maps are the $x \mapsto 1 + x$ isomorphisms.
   \end{enumerate}
\end{lemma}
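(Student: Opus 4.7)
The plan is to prove~(\ref{it:norm2}) first and use it for the surjectivity in~(\ref{it:norm1}). For~(\ref{it:norm2}), I would expand
\[
\Nm_{L/E}(1+x) = \prod_{\sigma\in\Gal(L/E)}(1+\sigma(x)) = 1 + \Tr_{L/E}(x) + \sum_{k\ge 2} e_k,
\]
where $e_k=\sum_{|S|=k}\prod_{\sigma\in S}\sigma(x)$ is the $k$-th elementary symmetric polynomial in the Galois conjugates of $x$. Since $1+\Tr_{L/E}(x)\in\cO_E^{\times}$, the assertion of~(\ref{it:norm2}) reduces to showing $e_k\in E_{>s+c_{L/E}}=E_{>\varphi_{L/E}(s)}$ for every $k\ge 2$; the commutative diagram is then formal, using the remark after Definition~\ref{defn:cdefined} giving $\Tr_{L/E}[L_{\ge s}]=E_{\ge s+c_{L/E}}$.

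To control $e_k$ I would decompose it along the orbits of $\Gal(L/E)$ acting by left multiplication on the $k$-element subsets of $\Gal(L/E)$. For an orbit~$O$ with representative $S_0$ and stabilizer $H:=\Stab_{\Gal(L/E)}(S_0)$, the element $\prod_{\sigma\in S_0}\sigma(x)$ is $H$-invariant, lies in $(L^H)_{\ge ks}$, and the orbit sum equals $\Tr_{L^H/E}\bigl(\prod_{\sigma\in S_0}\sigma(x)\bigr)\in E_{\ge ks+c_{L^H/E}}$. Since $H$ acts freely on $S_0$ by left multiplication, $|H|\le k$. Using $\Group(L/L^H)=H\cap\Group(L/E)$ and the fact that $\depth_{L/L^H}$ agrees with $\depth_{L/E}$ on this common subgroup (both are $\val((\tau\varpi_L-\varpi_L)/\varpi_L)$), Lemma~\ref{lem:cdifferentformula} yields
\[
c_{L/L^H} = \sum_{\tau\in\Group(L/L^H)\setminus\{1\}}\depth_{L/E}(\tau) \le (|H|-1)\,\ell(L/E) \le (k-1)\,\ell(L/E) < (k-1)\,s.
\]
Combined with the tower identity $c_{L/E}=c_{L/L^H}+c_{L^H/E}$, this rearranges to $ks+c_{L^H/E}>s+c_{L/E}$, and summing over the orbits~$O$ gives the desired estimate.

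For~(\ref{it:norm1}) the inclusion $\Nm_{L/E}(L^{\times}_{\ge s})\subset E^{\times}_{\ge\varphi_{L/E}(s)}$ is established immediately above the lemma, so only surjectivity remains. In case~(a) we have $\varphi_{L/E}(s)=s$ and $\ell(L/E)=0$; the residue-level norm $k_L^{\times}\sra k_E^{\times}$ is surjective, and for $s>0$ part~(\ref{it:norm2}) together with surjectivity of $\Tr_{L/E}$ on Moy-Prasad quotients supports an inductive lift. In case~(b), given $y\in E^{\times}_{\ge\varphi_{L/E}(s)}$, trace-surjectivity and~(\ref{it:norm2}) produce $z_0\in 1+L_{\ge s}$ with $y/\Nm_{L/E}(z_0)\in E^{\times}_{>\varphi_{L/E}(s)}$; iterating at strictly increasing levels $s=s_0<s_1<\cdots\to\infty$, each $>\ell(L/E)$, one gets a convergent product $z=\prod_i z_i$ with $\Nm_{L/E}(z)=y$ by completeness of $L$. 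For the ``only if'' direction (i.e., $L/E$ ramified with $0\le s\le\ell(L/E)$) I would cite the standard non-surjectivity of the norm on units (Serre, \emph{Local Fields}, Ch.~V, \S6) translated through our normalized indexing. The main technical point is the strict inequality $(k-1)s>(k-1)\ell(L/E)$ in~(\ref{it:norm2})---this is where the hypothesis $s>\ell(L/E)$ is indispensable, particularly for the top-orbit term where $|H|=k=n$ and the orbit contribution is $\Nm_{L/E}(x)$.
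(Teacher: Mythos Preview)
Your proof of part~(\ref{it:norm2}) is correct and takes a genuinely different route from the paper. The paper proceeds by d\'evissage: it first quotes Serre for the unramified and cyclic-prime-order cases, then uses solvability of $\Gal(L/E)$ to climb a tower $E\subset K\subset L$ with $K/E$ cyclic of prime order, combining the two steps via $\Nm_{L/E}=\Nm_{K/E}\circ\Nm_{L/K}$ and $\varphi_{L/E}=\varphi_{K/E}\circ\varphi_{L/K}$. Your argument is direct: you expand $\prod_\sigma(1+\sigma(x))$ and kill each $e_k$ for $k\ge 2$ by grouping its terms into $\Gal(L/E)$-orbits on $k$-subsets, recognizing each orbit sum as a trace from the stabilizer's fixed field, and then using $c_{L/E}=c_{L/L^H}+c_{L^H/E}$ together with the bound $c_{L/L^H}\le(|H|-1)\ell(L/E)<(k-1)s$. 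This is more self-contained (no induction, no Serre lookup for the prime case) and makes the role of the hypothesis $s>\ell(L/E)$ transparent. The paper's approach, on the other hand, simultaneously yields the non-surjectivity in part~(\ref{it:norm1}) for $s\le\ell(L/E)$ as a by-product of the same induction, whereas your treatment of the ``only if'' direction is a bare citation to Serre~V.\S6 that would need to be unpacked: the relevant input is really Serre~V.3~Prop.~5(iii) at the cyclic-prime level, propagated through a tower exactly as the paper does. Your surjectivity argument for~(\ref{it:norm1}) via successive approximation using~(\ref{it:norm2}) and completeness is fine and is essentially what the paper's induction amounts to once unwound.
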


\begin{proof}
    Suppose first that $L/F$ is unramified.  In this case  the statements in the Lemma follow from~\cite[V.2 Propositions 1 and 3]{serre:local} and their proofs.  
    
    Now suppose  that $L/F$  is totally ramified and cyclic of prime order.  Since
    \[ \Nm_{L/E} \colon L^\times_{= \ell(L/E)} \ra E^\times_{=\varphi_{L/E}(\ell(L/E))} \]
    is neither injective nor surjective~\cite[V.3 Proposition 5(iii)]{serre:local},  it follows that if $s \leq \ell(L/E)$, then  $\Nm_{L/E}[L^\times_{\geq s}] \subsetneq E^\times_{\geq \varphi_{L/E}(s)}$.  On the other hand, for $s > \ell(L/E)$, 
    the proof of~\cite[V.3 Proposition 5(iv)]{serre:local}  shows that Claim~\ref{it:norm2} of the Lemma holds.  It follows that  for all $s > \ell(L/E)$ we have $\Nm_{L/E}[L^\times_{\geq s}] = E^\times_{\geq \varphi_{L/E}(s)}$.

   Suppose now that $L/E$ is a totally ramified Galois extension.  We shall proceed by induction on $|\Group(L/E)|$.  If $\Group(L/E)$ is trivial, then $E = L$ and there is nothing to prove.  If $\Group(L/E)$ is not trivial, then  since $\Group(L/E)$ is solvable there is a subextension $K/E$ of $L/E$ such that $\Group(K/E) = \Group(L/E)/\Group(L/K)$ is cyclic of prime order.  By induction we know that for all $r > 0$ 
   \begin{enumerate}
   \item 
    \(  \Nm_{L/K} [L^\times_{\geq r}] \subset  K^\times_{\geq \varphi_{L/K}(r)}\) with equality if and only if $r > \ell(L/K)$ and 
    \item \label{it:norm2a} if  $r > \ell(L/K)$ and  $x \in L_{\geq r}$, then $\Nm_{L/K}(1+x) = 1 +\Tr_{L/K}(x)$ modulo $K^\times_{> \varphi_{L/K}(r)}$. 
    \end{enumerate}
   Since $K/E$ is cyclic of prime order, we know that for all $t > 0$
    \begin{enumerate}
   \item 
    \(  \Nm_{K/E} [K^\times_{\geq t}] \subset  E^\times_{\geq \varphi_{K/E}(t)}\) with equality if and only if $t > \ell(K/E)$ and 
    \item \label{it:norm2b} if  $t > \ell(K/E)$ and  $y \in K_{\geq t}$, then $\Nm_{K/E}(1+y) = 1 +\Tr_{K/E}(y)$ modulo $E^\times_{> \varphi_{K/E}(t)}$. 
    \end{enumerate}
For all $s > 0$ we have 
\begin{equation*}
    \begin{split}
    \Nm_{L/E}[L^\times_{\geq s}] &= \Nm_{K/E}[\Nm_{L/K}[L^\times_{\geq s}]] \\ 
    &\subset \Nm_{K/E}[K^\times_{\geq \varphi_{L/K}(s)}] \\
    &\subset E^\times_{ \geq \varphi_{K/E}(\varphi_{L/K}(s))} \\
    &= E^\times_{\geq\varphi_{L/E}(s)} 
     \end{split}
\end{equation*}
 with equality if and only if both $s> \ell(L/K)$ and $\varphi_{L/K}(s) > \ell(K/E)$.  But, from the exact sequence~(\ref{eq:exactnumberlower}) we know that  $s> \ell(L/K)$ and $\varphi_{L/K}(s) > \ell(K/E)$ if and only if $s > \ell(L/E)$.  Thus, Claim~\ref{it:norm1} holds for totally ramified Galois extensions. Finally, if $r > \ell(L/E)$, then for all $x \in L_{\geq r}$ we have
 \begin{equation*}
     \begin{split}
         \Nm_{L/E}(1 + x) & = \Nm_{K/E} ( \Nm_{L/K}(1 + x)) \\
         &= \Nm_{K/E} (1 + \Tr_{L/K}(x) + z) \qquad \hphantom{+ w}\text{     for some $z \in K_{>\varphi_{L/K}(r)}$}\\
          &= 1 + \Tr_{K/E}(\Tr_{L/K}(x) + z) + w  \qquad \text{ for some $w \in E_{>\varphi_{K/E}(\varphi_{L/K}(r))}$}\\
          &= 1 + \Tr_{L/E}(x)  \hphantom{Go} \qquad  \qquad \qquad \qquad \text{ modulo $E^\times_{>\varphi_{L/E}(r)}$,}\\
     \end{split}
 \end{equation*}
    and so Claim~\ref{it:norm2} holds for totally ramified Galois extensions.

 Finally, suppose $L/E$ is any finite Galois extension.  The case when $L/E$ is unramified is handled above, so we may assume $\Group(L/E)$ is nontrivial.    We let $K = L^{\Group(L/E)}$ be the maximal unramified extension of $E$ inside $L$.  Since $L/K$ is totally ramified with Galois group $\Group(L/K) = \Group(L/E)$ and $\varphi_{L/E} = \varphi_{L/K}$,  from our results above we have that for all $s > 0$
 \[ \Nm_{L/E}[L^\times_{\geq s}] = \Nm_{K/E}[\Nm_{L/K}[L^\times_{\geq s}]] \subset \Nm_{K/E}[K^\times_{\geq \varphi_{L/K}(s)}] = E^\times_{\geq \varphi_{L/K}(s)} = E^\times_{\geq \varphi_{L/E}(s)}\]
 with equality if and only if $s > \ell(L/K) = \ell(L/E)$.   For $r > \ell(L/E)$ our results above similarly yield $\Nm_{L/E}(1+x) = 1 + \Tr_{L/E}(x)$ modulo $E^\times_{>\varphi_{L/E}(r)}$.
\end{proof}

\subsection{Upper numbering subgroups and extensions} Suppose $E$ is an extension of $F$.
Recall that for  $r \geq 0$ we have
\[\Group_E^r = \lim_{\leftarrow} \Group(K/E)^r \text{ and } \Group_E^{r+} = \lim_{\leftarrow} \Group(K/E)^{r+}\]
where the limit is over all finite Galois extensions of $E$.

\begin{lemma} \label{lem:inertiaintersections} Suppose $r \in \R_{\geq 0}$,  and $L/E$ is a finite Galois extension.   We have
\[\Group_E^r \cap \Group_L^0 = \Group_L^{\psi_{L/E}(r)} \text{ and } \Group_E^{r+} \cap \Group_L^0 = \Group_L^{\psi_{L/E}(r)+}.\]
Moreover, if $\Group_E^r \leq \Group^0_L$, then  for all $s \geq r$ we have
\begin{enumerate}
    \item $\Group_L^{\psi_{L/E}(s)} = \Group_E^s$  and
    \item $\psi_{L/E}(s) - s = \psi_{L/E}(r) - r$.
\end{enumerate} 
\end{lemma}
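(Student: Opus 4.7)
The plan is to reduce the lemma to the finite-level exact sequence~(\ref{eq:exactnumberupper}) plus the identification $W_L \cap \Group_E = \Group_L^0$, and then pass to inverse limits. For the latter identification, I will first show that $L^u = L \cdot E^u$: since $k_L/k_E$ is finite, every finite unramified extension $L'/L$ has residue field inside $\bar k_E$, and if $E'/E$ is the unique unramified extension of $E$ with $k_{E'} = k_{L'}$, then $L \cdot E'$ is unramified over $L$ with residue field $k_{L'}$, so $L \cdot E' = L'$. Granting this, any $\sigma \in W_L \cap \Group_E$ fixes both $L$ and $E^u$, hence fixes $L^u$, placing $\sigma \in \Group_L^0$; the reverse containment is immediate.

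For the first equality of the lemma, I take $\sigma \in \Group_E^r \cap \Group_L^0$ and examine, for every finite Galois $M/L$ with $M/E$ also Galois (a cofinal system, since the Galois closure of any $M/L$ over $E$ remains Galois over $L$ because $L/E$ is Galois), the image $\sigma_M \in \Gal(M/E)$. Since $\sigma$ fixes $L$, $\sigma_M \in \Gal(M/L)$, and by~(\ref{eq:exactnumberupper}) the intersection $\Group(M/E)^r \cap \Gal(M/L)$ is exactly $\Group(M/L)^{\psi_{L/E}(r)}$. Taking the inverse limit gives $\sigma \in \Group_L^{\psi_{L/E}(r)}$; the reverse inclusion follows analogously from $\Group(M/L)^{\psi_{L/E}(r)} \leq \Group(M/E)^r$ in the same exact sequence.

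For the $+$ version, I will use $\Group(M/E)^{r+} = \bigcup_{s > r} \Group(M/E)^s$ at the finite level (valid as $\Group(M/E)$ is finite), combined with continuity and strict monotonicity of $\psi_{L/E}$, which gives $\{\psi_{L/E}(s) : s > r\} = (\psi_{L/E}(r), \infty)$ and hence $\Group(M/E)^{r+} \cap \Gal(M/L) = \Group(M/L)^{\psi_{L/E}(r)+}$. Passing to the inverse limit exactly as before yields $\Group_E^{r+} \cap \Group_L^0 = \Group_L^{\psi_{L/E}(r)+}$.

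For the moreover part, the surjection $\Group_E^r \sra \Group(L/E)^r$ (from surjectivity of the transition maps in the defining inverse limit) combined with $\Group_E^r \leq \Group_L^0$ forces $\Group(L/E)^r = 1$, i.e., $r \geq u(L/E)$. Then Corollary~\ref{cor:cdifferentandHH} gives $\varphi_{L/E}(s) = s + c_{L/E}$ for all $s \geq r \geq \ell(L/E)$, so $\psi_{L/E}(s) - s = -c_{L/E}$ is independent of $s$, yielding~(2). For~(1), I apply the first equality of the lemma with $s$ in place of $r$; the inclusions $\Group_E^s \leq \Group_E^r \leq \Group_L^0$ give $\Group_E^s \cap \Group_L^0 = \Group_E^s$, so the first equality delivers $\Group_E^s = \Group_L^{\psi_{L/E}(s)}$. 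The main care needed throughout is the cofinality argument for finite Galois extensions of $L$ that are simultaneously Galois over $E$; beyond that, everything is a mechanical translation between the absolute and the finite pictures.
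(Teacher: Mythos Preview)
Your proof is correct and follows essentially the same route as the paper: both reduce to the finite-level identity $\Group(M/E)^s \cap \Gal(M/L) = \Group(M/L)^{\psi_{L/E}(s)}$ (you cite the exact sequence~\eqref{eq:exactnumberupper} directly, the paper rederives it via lower numbering) and then pass to the inverse limit. The only notable difference is in part~(2) of the moreover: the paper computes $\psi_{L/E}(s)-s$ directly from the integral definition of $\varphi$ using $\Group(M/E)_t=\Group(M/L)_t$ for $t\geq\psi_{M/E}(r)$, whereas you deduce $r\geq u(L/E)$ and then invoke Corollary~\ref{cor:cdifferentandHH} to get $\psi_{L/E}(s)=s-c_{L/E}$; your route is shorter since the corollary is already available, though the step from $\varphi_{L/E}(s)=s+c_{L/E}$ to $\psi_{L/E}(s)-s=-c_{L/E}$ deserves one extra line (you need $s\geq u(L/E)$, not just $s\geq\ell(L/E)$, on the $\psi$ side).
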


\begin{proof}
Suppose $M/E$ is a finite Galois extension such that $L \leq M$.

For the first statement, it will be enough to show that $\Group(M/L)^{\psi_{L/E}(s)} = \Group(M/E)^s \cap \Group(M/L)$ for all $s \geq r$.   Suppose $s \geq r$. We have
\begin{equation*}
    \begin{split}
       \Group(M/E)^s \cap \Group(M/L) 
        &= \Group(M/E)_{\psi_{M/E}(s)} \cap \Group(M/L) \\
        &= \Group(M/L)_{\psi_{M/E}(s)} = \Group(M/L)^{\varphi_{M/L} (\psi_{M/E}(s))}
    \end{split}
\end{equation*}
Since $\varphi_{M/E} = \varphi_{L/E} \circ \varphi_{M/L}$, we have 
\[\varphi_{M/L} \circ \psi_{M/E} = \varphi_{M/L} \circ \varphi^{-1}_{M/E} = \varphi_{M/L} \circ (\varphi_{L/E} \circ \varphi_{M/L})^{-1} = \varphi^{-1}_{L/E} = \psi_{L/E}.\]

Now assume $\Group_E^r \leq \Group_L^0$.   The first of the remaining statements follows  from our work above.  The calculation above   shows that
for all $t \geq \psi_{M/E}(r)$ we have
\( \Group(M/E)_t = \Group(M/L)_t\).
Thus
\begin{equation*}
    \begin{split}
        \psi_{L/E}(s) - s &= \varphi_{M/L}(\psi_{M/E} (s)) -  \varphi_{M/E}(\psi_{M/E} (s)) \\
        &= \varphi_{M/L}(\psi_{M/E} (r)) + \int_{\psi_{M/E}(r)}^ {\psi_{M/E} (s)}  \big| \Group(M/
        L)_t \big| \, dt \\
        &  \hspace{2em}  - \Bigr( \varphi_{M/E}(\psi_{M/E} (r))  + \int_{\psi_{M/E} (r)}^ {\psi_{M/E} (s)}  \big| \Group(M/E)_t \big| \, dt \Bigl) \\  
        &= \varphi_{M/L}(\psi_{M/E} (r)) - \varphi_{M/E}(\psi_{M/E} (r)) 
        = \psi_{L/E}(r) - r. \qedhere
    \end{split}
\end{equation*}
\end{proof}

\begin{lemma}\label{lem:TFAE}
Suppose $s \in \R_{\geq 0}$.  The following statements are equivalent.
\begin{enumerate}

 \item \label{item:six}   $ c_{L/E} = s - \psi_{L/E}(s) $.

       \item   \label{item:five} \( \psi_{L/E}(s) \geq \ell(L/E)\)  or \( s \geq u(L/E)\).
       
    \item  \label{item:two}  \(\Group(L/E)_{\psi_{L/E}(s)+} = 1 \).
    
\item  \label{item:three} \( \Nm_{L/E} [L^\times_{> \psi_{L/E}(s)}] = E^\times_{>s} \).

\item    \label{item:four} $\Group_L^{\psi_{L/E}(s)+} = \Group_E^{s+}$. 

   \item \label{item:one} \(\Group_E^{s+} \leq \Group^0_L    \).

\end{enumerate} 
\end{lemma}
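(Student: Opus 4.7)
My plan is to establish the six conditions' equivalence via a small web of implications. I first note that $\psi_{L/E}$ is continuous and strictly increasing, so $\psi_{L/E}(s)\ge\ell(L/E)$ if and only if $s\ge u(L/E)=\varphi_{L/E}(\ell(L/E))$; this shows the two clauses within~\ref{item:five} are themselves equivalent, and I will henceforth use ``$\psi_{L/E}(s)\ge\ell(L/E)$'' as shorthand for~\ref{item:five}. The equivalence~\ref{item:six}$\Leftrightarrow$\ref{item:five} then follows from Corollary~\ref{cor:cdifferentandHH}: substituting $t=\psi_{L/E}(s)$ into the identity $\varphi_{L/E}(t)=t+c_{L/E}$ (valid exactly when $t\ge\ell(L/E)$) gives $s=\psi_{L/E}(s)+c_{L/E}$ iff $\psi_{L/E}(s)\ge\ell(L/E)$. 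The equivalence~\ref{item:five}$\Leftrightarrow$\ref{item:two} is immediate from the definition of $\ell(L/E)$: the union $\Group(L/E)_{\psi_{L/E}(s)+}=\bigcup_{r>\psi_{L/E}(s)}\Group(L/E)_r$ is trivial exactly when every $r>\psi_{L/E}(s)$ exceeds the last break $\ell(L/E)$.

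For the inertia block~\ref{item:two}$\Leftrightarrow$\ref{item:one}$\Leftrightarrow$\ref{item:four}, I would apply~\eqref{eq:exactnumberupper} to a tower $M/L/E$ with $M/E$ finite Galois containing $L$, take the $+$-version (using continuity of $\psi_{L/E}$ at $s$), and pass to the inverse limit over $M$ to obtain the short exact sequence
\[
1\to\Group_L^{\psi_{L/E}(s)+}\to\Group_E^{s+}\to\Group(L/E)^{s+}\to 1,
\]
with $\Group(L/E)^{s+}=\Group(L/E)_{\psi_{L/E}(s)+}$. Then~\ref{item:two} is the vanishing of the right-hand term, which is exactly the content of~\ref{item:four}. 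By Lemma~\ref{lem:inertiaintersections} at $r=s$ we have $\Group_L^{\psi_{L/E}(s)+}=\Group_E^{s+}\cap\Group_L^0$, so~\ref{item:four} is further equivalent to $\Group_E^{s+}\subseteq\Group_L^0$, i.e.,~\ref{item:one}.

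The remaining norm equivalence~\ref{item:three}$\Leftrightarrow$\ref{item:five} will contain the main obstacle. The direction~\ref{item:five}$\Rightarrow$\ref{item:three} is direct from Lemma~\ref{lem:normrange}(\ref{it:norm1}): every $r>\psi_{L/E}(s)\ge\ell(L/E)$ satisfies $r>\ell(L/E)$, so $\Nm_{L/E}[L^\times_{\ge r}]=E^\times_{\ge\varphi_{L/E}(r)}$, and taking the union over such $r$ yields $E^\times_{>s}$ by continuity of $\varphi_{L/E}$. The hard part is the converse~\ref{item:three}$\Rightarrow$\ref{item:five}: if $\psi_{L/E}(s)<\ell(L/E)$, then Lemma~\ref{lem:normrange}(\ref{it:norm1}) tells us each $\Nm_{L/E}[L^\times_{\ge r}]$ with $r\in(\psi_{L/E}(s),\ell(L/E)]$ is a \emph{proper} subgroup of $E^\times_{\ge\varphi_{L/E}(r)}$, but an increasing union of proper subgroups could a priori exhaust $E^\times_{>s}$, and that is the subtlety. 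To resolve it I would exploit the already-established~\ref{item:two}$\Leftrightarrow$\ref{item:five}: the failure of~\ref{item:five} yields a nontrivial $\Group(L/E)^{s+}$, and local class field theory identifies $E^\times_{>s}/\Nm_{L/E}[L^\times_{>\psi_{L/E}(s)}]$ with the image of $\Group(L/E)^{s+}$ inside $\Gal(L/E)^{\mathrm{ab}}$, which is nontrivial since $\Group(L/E)^{s+}$ is a nontrivial normal subgroup of the solvable group $\Gal(L/E)$. A fully self-contained alternative would follow the composition-series reduction of Lemma~\ref{lem:normrange}'s proof, reducing to cyclic prime-degree totally ramified $L/E$ where~\cite[V.3 Proposition~5]{serre:local} supplies an explicit missed coset at level $\varphi_{L/E}(r)$ for $r$ slightly above $\psi_{L/E}(s)$.
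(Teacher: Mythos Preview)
Your overall plan matches the paper's almost exactly: the equivalences \ref{item:six}$\Leftrightarrow$\ref{item:five}$\Leftrightarrow$\ref{item:two} via Corollary~\ref{cor:cdifferentandHH} and the definition of $\ell(L/E)$, and the block \ref{item:two}$\Leftrightarrow$\ref{item:four}$\Leftrightarrow$\ref{item:one} via the exact sequence~\eqref{eq:exactnumberupper} and Lemma~\ref{lem:inertiaintersections}, are handled just as in the paper.

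The one place you diverge is \ref{item:three}$\Rightarrow$\ref{item:five}, and here you are overcomplicating matters. Your worry that ``an increasing union of proper subgroups could a priori exhaust $E^\times_{>s}$'' does not arise, because the filtration on $L^\times$ is discrete: $L^\times_{>\psi_{L/E}(s)}$ is literally equal to $L^\times_{\ge r_0}$ for the single value $r_0$ that is the smallest element of $\tfrac{1}{e(L/F)}\Z_{>0}$ exceeding $\psi_{L/E}(s)$. If $\psi_{L/E}(s)<\ell(L/E)$ then $r_0\le\ell(L/E)$ (since $\ell(L/E)\in\tfrac{1}{e(L/F)}\Z$), so Lemma~\ref{lem:normrange}(\ref{it:norm1}) gives $\Nm_{L/E}[L^\times_{\ge r_0}]\subsetneq E^\times_{\ge\varphi_{L/E}(r_0)}\subseteq E^\times_{>s}$, and \ref{item:three} fails. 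This is exactly why the paper disposes of this step in one line.

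Your proposed LCFT workaround also has a gap: the assertion that a nontrivial normal subgroup of a solvable group has nontrivial image in the abelianization is false in general (the commutator subgroup itself is a counterexample), so you would need a separate argument that $\Group(L/E)^{s+}\not\subseteq[\Gal(L/E),\Gal(L/E)]$, and the identification of $E^\times_{>s}/\Nm_{L/E}[L^\times_{>\psi_{L/E}(s)}]$ with that image also requires justification beyond the norm-limitation theorem. Your composition-series alternative would succeed, but the discreteness observation above makes all of this unnecessary.
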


\begin{proof} 

(\ref{item:six}) $\Leftrightarrow$ (\ref{item:five}):  From Corollary~\ref{cor:cdifferentandHH} we know that $c_{L/E} = u(L/E) - \ell(L/E)$.  Since $t \mapsto (t - \psi_{L/E}(t))$ is a nondecreasing function on $\R_{\geq 0}$,  for all $s \geq 0$ we have $s - \psi_{L/E}(s) = u(L/E) - \ell(L/E)$ if and only if $s \geq u(L/E)$. 
Since $s \geq u(L/E)$ if and only if $\psi_{L/E}(s) \geq \ell(L/E)$, the result follows.

(\ref{item:five}) $\Leftrightarrow$ (\ref{item:two}):
Since  \( \psi_{L/E}(s) \geq \ell(L/E)\)  if and only if \( s \geq u(L/E)\), it is enough to show (\ref{item:two}) holds if and only if  \( \psi_{L/E}(s) \geq \ell(L/E)\).  This follows  from the definition of $\ell(L/E)$.

(\ref{item:two}) $\Leftrightarrow$ (\ref{item:three}): 
Since $\Group(L/E)_{\psi_{L/E}(s)+} =1$ if and only if $\psi_{L/E}(s) \geq \ell(L/E)$, this follows from  Lemma~\ref{lem:normrange}~(\ref{it:norm1}).

(\ref{item:two}) $\Leftrightarrow$ (\ref{item:four}):
Suppose $M/E$ is a finite Galois extension with $L \leq M$.   We need to show \(\Group(M/L)^{\psi_{L/E}(t)} = \Group(M/E)^t \) if and only if $\Group(L/E)^t = \Group(L/E)_{\psi_{L/E}(t)}$ is trivial.
Thanks to Equation~(\ref{eq:exactnumberupper})
\[ 1 \ra \Group(M/L)^{\psi_{L/E}(t)} \ra \Group(M/E)^t  \ra \Group(L/E)_{\psi_{L/E}(t)} \ra 1\]
is exact.   From this it follows that $\Group(L/E)_{\psi_{L/E}(t)}$ is trivial if and only if 
\[\Group(M/L)^{\psi_{L/E}(t)} = \Group(M/E)^t.\]

(\ref{item:four}) $\Leftrightarrow$ (\ref{item:one}):  
From Lemma~\ref{lem:inertiaintersections} we know that $\Group_L^{\psi_{L/E}(s)+}  = \Group_E^{s+} \cap \Group_L^0$.  Since $\Group_L^0 \leq \Group_E^0$, this means $\Group_L^{\psi_{L/E}(s)+} \leq \Group_E^{s+}$.    Thus, 
$\Group_L^{\psi_{L/E}(s)+}  = \Group_E^{s+}$ if and only if $\Group_E^{s+} \leq \Group_L^{\psi_{L/E}(s)+}$. 

If   
 $\Group_E^{s+} \leq \Group_L^{\psi_{L/E}(s)+}$, then  $\Group_E^{s+} \leq \Group_L^{0}$.  On the other hand, if $\Group_E^{s+} \leq \Group_L^{0}$, then $\Group_E^{s+} = \Group_E^{s+} \cap \Group_L^{0} = \Group_L^{\psi_{L/E}(s)+}$ from Lemma~\ref{lem:inertiaintersections}, so $\Group_E^{s+} \leq \Group_L^{\psi_{L/E}(s)+}$.
\end{proof}

\subsection{An existence result}
We will often need to know that there exists an extension $L/E$ that has reasonable properties with respect to the depth of the representation we are considering.  In particular, if the depth is $s$, then we will need that $s \cdot e(L/E) \in \Z$ and $u(L/E) < s$.  The next Lemma shows that such an extension exists.

\begin{lemma}  \label{lem:Eexists} Suppose $n\in\Z_{>0}$ and  $\varepsilon\in\R_{>0}$. There exists a finite Galois extension $L/E$ such that
\begin{enumerate}
    \item $n$ divides $e(L/E)$,
    \item $u(L/E) < \varepsilon$, and 
    \item $\Group_L^{\varepsilon}\subset \Group_E^{0+}$.
\end{enumerate}
Moreover, if $L/E$ satisfies these three conditions, then so too does $LM/E$ where $M$ is any finite tame Galois extension of $E$.
\end{lemma}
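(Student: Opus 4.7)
I would approach this in two stages, first observing a major simplification: condition~(3) is actually \emph{automatic} for any finite Galois $L/E$ and any $\varepsilon>0$. Indeed, $\Group_L^\varepsilon\subseteq\Group_L^{0+}$ is pro-$p$ (being inside the wild inertia of $L$); since $\Group_E^0/\Group_E^{0+}$ is pro-(prime-to-$p$), any pro-$p$ subgroup of $\Group_E^0$ must lie in $\Group_E^{0+}$. Combined with $\Group_L^\varepsilon\subseteq\Group_L^0\subseteq\Group_E^0$, this yields $\Group_L^\varepsilon\subseteq\Group_E^{0+}$. So only conditions (1) and (2) need to be produced.

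Next I would dispose of the ``moreover'' assertion since its proof is reused. Set $L':=LM$; then $L'/L$ is tame (as $M/E$ is), so $\varphi_{L'/L}(r)=r$ and the composition formula gives $\varphi_{L'/E}=\varphi_{L/E}\circ\varphi_{L'/L}=\varphi_{L/E}$. Applying the exact sequence~(\ref{eq:exactnumberlower}) to $E\subset L\subset L'$, for $r>0$ the kernel $\Group(L'/L)_r$ is trivial by tameness, so $\Group(L'/E)_r\cong\Group(L/E)_r$; hence $\ell(L'/E)=\ell(L/E)$ and $u(L'/E)=u(L/E)<\varepsilon$. Condition~(1) follows from $e(L/E)\mid e(L'/E)$, and condition~(3) is automatic.

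For existence, write $n=mp^a$ with $\gcd(m,p)=1$. The strategy is to first make a large tame base change to simultaneously shrink upper breaks and enlarge the residue field, then use local class field theory to produce an abelian wild extension, and finally pass to a Galois closure over $E$. Concretely, pick $N$ coprime to $p$, divisible by $m$, with $N\cdot e(E/F)\cdot\varepsilon>1$, and set $E'':=E(\varpi_E^{1/N},\zeta_N,\zeta_{|k_E|^a-1})$. Then $E''/E$ is Galois tame with $e(E''/F)\geq N\cdot e(E/F)$ and $[k_{E''}:\F_p]\geq a$. Since $(1+\fm_{E''})/(1+\fm_{E''}^2)\cong k_{E''}^+$ has $\F_p$-dimension $\geq a$, local class field theory yields an abelian totally ramified extension $K/E''$ with $\Gal(K/E'')\cong(\Z/p)^a$ whose norm subgroup $N_K\subseteq E''^\times$ contains $1+\fm_{E''}^2$. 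The standard upper break of $K/E''$ is then $\leq 1$; the conversion formulas of Section~\ref{sec:nlcft} translate this to $u(K/E'')\leq 1/e(E''/F)\leq 1/(N\cdot e(E/F))<\varepsilon$ in the paper's normalization.

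Finally, to make the extension Galois over $E$, set $L:=\prod_{\sigma\in\Gal(E''/E)}\sigma(K)$; this is well defined because $K/E''$ is Galois, and $L/E$ is visibly Galois. Then $L/E''$ is abelian with norm subgroup $\bigcap_\sigma\sigma(N_K)\supseteq 1+\fm_{E''}^2$ (this containment is preserved because $1+\fm_{E''}^2$ is $\Gal(E''/E)$-stable), so the same estimate gives $u(L/E'')<\varepsilon$. The tame-base-change argument from the moreover step then yields $u(L/E)=u(L/E'')<\varepsilon$, and $e(L/E)\geq e(K/E'')\cdot e(E''/E)\geq p^a\cdot N$ is divisible by $n=mp^a$ since $m\mid N$. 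The main obstacle is that a naive Galois closure can enlarge upper breaks; the abelian-compositum trick sidesteps this because in LCFT compositum corresponds to intersection of norm groups, and the key containment $\supseteq 1+\fm_{E''}^2$ is preserved under both $\Gal(E''/E)$-action and intersection.
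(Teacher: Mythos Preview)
Your proof is correct, and it follows a genuinely different route from the paper's.

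\textbf{Comparison.} Your observation that condition~(3) holds for \emph{every} finite Galois $L/E$ and every $\varepsilon>0$---because $\Group_L^\varepsilon\subseteq\Group_L^{0+}$ is pro-$p$ while $\Group_E^0/\Group_E^{0+}$ is pro-prime-to-$p$---is cleaner than what the paper does; the paper instead derives~(3) from~(2) via Lemma~\ref{lem:TFAE}. For existence, the paper argues abstractly: it shows (using a tame auxiliary extension and LCFT as a counting tool) that $n$ divides the pro-order of $\Group_E/\Group_E^{\varepsilon}$, then picks a finite Galois $M/E$ with $n\mid|\Group(M/E)/\Group(M/E)^\varepsilon|$ and sets $L:=M^{\Group(M/E)^\varepsilon}$, which automatically has $u(L/E)<\varepsilon$. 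You instead construct $L$ explicitly: a large tame base change $E''/E$ (to force small normalized breaks and large residue field), an LCFT-produced abelian $(\Z/p)^a$-extension $K/E''$ with norm group containing $1+\fm_{E''}^2$, and then the compositum of its $\Gal(E''/E)$-conjugates. The LCFT-compositum trick (intersection of norm groups still contains the Galois-stable subgroup $1+\fm_{E''}^2$) is a nice way to control upper breaks under Galois closure, a point the paper sidesteps entirely by its quotient construction. Both the paper and you handle the ``moreover'' via $\varphi_{LM/E}=\varphi_{L/E}$.

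\textbf{One imprecision.} When you write ``the tame-base-change argument from the moreover step then yields $u(L/E)=u(L/E'')$,'' note that the configurations differ: in the moreover the \emph{top} piece $L'/L$ is tame, whereas here the \emph{bottom} piece $E''/E$ is tame. The conclusion is still correct, but you should spell it out: applying the exact sequence~(\ref{eq:exactnumberlower}) to $E\subset E''\subset L$ kills the right-hand term $\Group(E''/E)_{\varphi_{L/E''}(r)}$ for $r>0$, giving $\Group(L/E)_r\cong\Group(L/E'')_r$, and $\varphi_{L/E}=\varphi_{E''/E}\circ\varphi_{L/E''}=\varphi_{L/E''}$ then yields $u(L/E)=u(L/E'')$.
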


\begin{proof}
If $n=1$, then we can take $L=E$.  Assume now that $n > 1$.
     Since $\Group_E^{\varepsilon}$ is a closed normal subgroup of $\Group_E = \Group_E^0$, the quotient $\Group_E/\Group_E^{\varepsilon}$ is a profinite group~\cite[V.1 Corollary~3]{cassels-frohlich}.   Write $n = n'p^m$ with $(n',p) =1$.  Thanks to Euler's theorem, $n'$ divides $p^{\phi(n')j} - 1$ for all $j\in \Z_{>0}$.  Thus, we can find a tamely ramified extension $\tilde{E}$ of $E$ such that $n = n'p^m$ divides $| \tilde{E}^{\times}_0/\tilde{E}^{\times}_\varepsilon| = | \tilde{E}^{\times}_0/\tilde{E}^{\times}_{0^+}| \cdot  | \tilde{E}^{\times}_{0^+}/\tilde{E}^{\times}_\varepsilon |$.  
       Since $\Group_{\tilde{E}} \leq \Group_E$ and $\Group_{\tilde{E}} \cap \Group_E^\varepsilon = \Group_{\tilde{E}}^{\psi_{\tilde{E}/E}(\varepsilon)} = \Group_{\tilde{E}}^{\varepsilon}$,
      we have an injective map $\Group_{\tilde{E}}/\Group_{\tilde{E}}^{\varepsilon} \ira \Group_E/\Group_E^{\varepsilon}$.  On the other hand, from local class field theory we have a surjection $\Group_{\tilde{E}}/\Group_{\tilde{E}}^{\varepsilon}  \sra \tilde{E}^{\times}_0/\tilde{E}^{\times}_\varepsilon$.  Thus, $n$ divides the (pro-)order of $\Group_E/\Group_E^{\varepsilon}$ as a profinite group.  Consequently, there exists a finite Galois extension $M$ of $E$ such that $n$ divides $| \Group({M}/E)/\Group({M}/E)^\varepsilon|$. 
      Let $L = {M}^{\Group({M}/E)^\varepsilon}$.

 Since $\Group(L/E) \cong \Group({M}/E)/\Group({M}/E)^\varepsilon$, we know that $n$ divides $e(L/E) = |\Group(L/E)|$. 

      Choose $u \in \R_{\geq 0}$ such that $\Group(M/E)^{u:u+}$ is not trivial and $\Group(M/E)^{u+} = \Group(M/E)^\varepsilon$.  (Since the assignment $s \mapsto \Group(M/E)^s$ is right-continuous and  $\Group(L/E)$ is not trivial, such a $u$ exists.)  Note that $\Group(L/E)^s \cong \Group(M/E)^s \Group(M/E)^\varepsilon/\Group(M/E)^\varepsilon$, which is trivial if $s > u$  since then $\Group(M/E)^s \leq \Group(M/E)^{u+} = \Group(M/E)^\varepsilon$.   Thus, $\varepsilon > u  \geq u(L/E)$.

      Since $ \varepsilon > u \geq u(L/E)$, from Lemma~\ref{lem:TFAE} we conclude that $\Group_L^{\varepsilon} \leq \Group_L^{u+} = \Group_E^{\psi_{L/E}(u)+} \leq \Group_E^{0+}$.

      Finally, suppose $M$ is a finite tame extension of $E$.  Since $e(L/E)$ divides $e(LM/E)$, we have that $n$ divides $e(LM/E)$.  Since $LM/L$ is tamely ramified, we have
      \[\psi_{LM/E} = \psi_{LM/L} \circ \psi_{L/E} = \psi_{L/E}.\]
      Thus, the upper numbering breaks of $LM/E$ are exactly those of $L/E$, and we conclude that $u(LM/E) = u(L/E) <\varepsilon$.   Since $\Group_{LM}^{\varepsilon} =\Group_{L}^{\varepsilon}$, we conclude
     that $\Group_{LM}^{\varepsilon}\subset \Group_E^{0+}$.
\end{proof}

\section{A construction for tamely ramified tori}
\label{sec:constructiontametori}
 Suppose $T$ is an $F^t$-split torus, $r \in \Q_{> 0}$,  and  $X \in \ft^*(\bar{F})_{=-r}$.
In this section  we associate to this data   a continuous homomorphism $\phi^T_X \colon \Group_F^r \ra T^{\vee}$ that
is trivial on $\Group_F^{r+}$.

\subsection{$(X,r,T)$-adapted extensions}
We begin with a definition.

\begin{definition}  A finite Galois extension $E/F$ for which 
\begin{itemize}
    \item  $r \cdot e(E/F) \in \Z$,
    \item $u(E/F) < r$,
     \item $T$ is $E$-split, and
    \item $X \in \ft^*(E)_{=-r }$
\end{itemize}
will be said to be \emph{$(X,r,T)$-adapted}.
\end{definition}

\begin{lemma}  \label{lem:lemma12}
    There is a finite Galois extension $E/F$ that is $(X,r,T)$-adapted.
\end{lemma}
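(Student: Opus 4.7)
The plan is to apply Lemma~\ref{lem:Eexists} to produce a finite Galois extension of $F$ handling the numerical conditions $r\cdot e(E/F)\in\Z$ and $u(E/F)<r$, and then to enlarge it by a tame Galois extension of $F$ to accommodate the two remaining conditions (that $T$ split over $E$ and that $X\in\ft^*(E)_{=-r}$).

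First I will write $r=a/d$ in lowest terms and apply Lemma~\ref{lem:Eexists} over the base field $F$ with $n=d$ and $\varepsilon=r$. This produces a finite Galois extension $L/F$ with $d\mid e(L/F)$ (hence $r\cdot e(L/F)\in\Z$) and $u(L/F)<r$.

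Next I will construct a finite tame Galois extension $M/F$ large enough to split $T$ and to accommodate the residue-field data of $X$. Fix a tame Galois extension $M_0/F$ splitting $T$ (which exists since $T$ is $F^t$-split), a $\Z$-basis $\chi_1,\ldots,\chi_n$ of $X^*(T)$, and an element $\varpi\in LM_0$ with $\val(\varpi)=-r$ (possible since $d\mid e(LM_0/F)$). Since $T$ splits over $LM_0$, one has $\ft^*(\bar F)=X^*(T)\otimes_\Z\bar F$, and the identification $\ft^*(\bar F)_{=-r}=\varpi\cdot\bigl(X^*(T)\otimes_\Z\bar k\bigr)$ lets me expand $X=\varpi\cdot\sum_i a_i\chi_i$ with $a_i\in\bar k$. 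Let $M_1/F$ be the unramified extension whose residue field is the finite extension of $k_F$ generated by $k_{M_0}$ and the $a_i$; this $M_1/F$ is automatically Galois (finite extensions of finite fields are Galois) and, being unramified, tame. Set $M:=M_0 M_1$, which is finite tame Galois over $F$ as the compositum of tame Galois extensions.

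Finally I set $E:=L M$. The moreover clause of Lemma~\ref{lem:Eexists} then gives $d\mid e(E/F)$ and $u(E/F)<r$. By construction $T$ splits over $E\supset M_0$, and since $\varpi\in E$, each $a_i\in k_E$, and each $\chi_i$ is defined over $E$, it follows that $X=\varpi\sum_i a_i\chi_i\in\ft^*(E)_{=-r}$. The main obstacle is to enlarge $L$ enough to split $T$ and accommodate $X$ without raising the wild ramification break above $r$; this is exactly what the moreover clause of Lemma~\ref{lem:Eexists} controls, and the decisive points are that $T$ already splits over a tame Galois extension of $F$, and that the residue-field extension needed to house the coefficients of $X$ corresponds to an unramified (hence tame) extension of $F$, so that the entire enlargement $M/F$ is tame.
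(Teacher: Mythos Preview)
Your proof is correct and takes essentially the same approach as the paper: invoke Lemma~\ref{lem:Eexists} for the two numerical conditions, then enlarge by a tame Galois extension to split $T$, and finally by an unramified extension to arrange $X\in\ft^*(E)_{=-r}$. The paper phrases the last step more conceptually (via $\ft^*(\bar F)_{=0}\cong\ft^*(E^u)_{=0}$ and $\ft^*(E')_{=-r}\cong\ft^*(E')_{=0}$ for unramified $E'/E$), whereas you make it explicit by extracting the coordinates $a_i\in\bar k$ of $X$ in a basis of $X^*(T)$ and adjoining them to the residue field; the content is the same.
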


\begin{proof}
From Lemma~\ref{lem:Eexists} there is a finite Galois extension $E/F$ such that 
$r \cdot e(E/F) \in \Z$ and $u(E/F) < r$.   Since neither of these properties changes if we replace $E$ with a tamely ramified extension, we may also assume that $T$ is an $E$-split $E$-torus.
   Since $\ft^*(\bar{F})_{=0}$  may be identified with $\ft^*(E^u)_{=0}$ and  $\ft^*(E')_{=-r} \cong \ft^*(E')_{=0}$ for all unramified extensions $E'/E$, after replacing $E$ with a finite unramified extension we may assume that $X \in \ft^*(E)_{=-r}$.
\end{proof}

\begin{remark} If $r \in \Z_{(p)}$, then we can take $E$ to be tamely ramified  (in which case $u(E/F) = 0$).  
\end{remark}

\subsection{The character $\phi^T_{X,E}: \Group_F^{r:r+} \ra T^{\vee}$} \label{sec:constructphi}
Suppose $E/F$ is $(X,r,T)$-adapted.  Consider
\begin{equation}\label{eq:characteronlieT}
\begin{array}{ccc}
\ft(E)_{r-c_{E/F}}&\longrightarrow &\Cc\\
V&\longmapsto&\Lambda_E(V,X).
\end{array}
\end{equation}
where $\Lambda_E$ is the additive character of $E$ defined in Section~\ref{sec:di9fferentdifferent} and $c_{E/F}$ is defined in Definition~\ref{defn:cdefined}.  Denote by $\chi^T_{X,E}$ the character on $T(E)_{\geq(r-c_{E/F})}$ given by the pullback of \eqref{eq:characteronlieT} under the Moy-Prasad isomorphism $T(E)_{=r-c_{E/F}}\xra{\sim}\ft(E)_{=r-c_{E/F}}$.

Suppose $\psi_i$ for $i \in \{1,2\}$ is a character of $T(E)$ whose restriction to $T(E)_{\geq r-c_{E/F}}$ yields $\chi_{X,E}^T$. 
Denote the associated Langlands parameter by $\varphi_i \colon W_E\ra {}T^{\vee}$.  Since  $\psi_1^{-1} \psi_2$ has depth less than $r-c_{E/F}$, by the local Langlands correspondence for tori and its preservation of depth for tamely ramified tori~\cite[\S7.5 and \S7.10]{Yu09} we conclude that  $\varphi_1^{-1} \varphi_2$ will also have depth less than $r-c_{E/F}$.  That is, $\varphi_1$ and $\varphi_2$ have the same restriction to $\Group_E^{r-c_{E/F}}$.   Thus,  there is a bijective correspondence between the group of characters of $T(E)_{=r-c_{E/F}}$ and the group of continuous homomorphisms $\Group_E^{r-c_{E/F}}/ \Group_E^{(r-c_{E/F})+}\ra T^{\vee}$ that can be extended to an $L$-homomorphism $W_E\ra {}T^{\vee}$.

Since $r > u(E/F)$, from Lemma~\ref{lem:TFAE} we have $\psi_{E/F}(r) = r - c_{E/F}$, 
$ \Group_E^{r - c_{E/F}}  = \Group_F^{r}$, and $\Group_E^{(r-c_{E/F})+} = \Group_F^{r+}$.
Thus,  the bijection in the paragraph above associates to  $\chi^T_{X,E}$ a continuous homomorphism that we will denote by $\phi^T_{X,E}: \Group_F^{r:r+} \ra T^{\vee}$.  We will also denote by $\phi^T_{X,E}$ the inflation of this homomorphism to a homomorphism  $\Group_F^r \ra T^{\vee}$. 
We have an embedding $T^{\vee}\ira G^{\vee}$ that is determined up to $G^{\vee}$-conjugation,
and the $G^{\vee}$-conjugacy class of $\phi_{X,E}^T$ is a tame restricted depth-$r$ parameter as discussed in the Introduction.

\begin{remark} \label{rem:wildtori} For tori that are not tamely ramified, it is known that depth is not preserved~ (see~\cite{AP18}, ~\cite{MP19} or the comments following~\cite[Theorem~3.33]{AP22}).  Hence, the construction above cannot be carried out without a tameness assumption.   In the known examples where depth preservation fails for tori, the depth of the character is smaller than the depth of the corresponding Langlands parameter.    
\end{remark}

\subsection{$\phi^T_{X,E}$ is independent of the choice of $E$}  
Suppose that
 $E$ and $E'$ are both finite separable extensions of $F$ that are $(X,r,T)$-adapted.
In this section we show that $\phi_{X,E}^T = \phi_{X,{E'}}^T$.

\begin{lemma} \label{lem:adatptedreduction} 
The compositum of $(X,r,T)$-adapted fields is again
 $(X,r,T)$-adapted.
\end{lemma}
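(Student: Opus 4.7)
It suffices to handle the case of two $(X,r,T)$-adapted extensions $E_1, E_2$, the general case following by iteration. I would verify the four bulleted conditions for the compositum $L := E_1 E_2$ in order of increasing subtlety; note $L/F$ is automatically finite Galois since each $E_i/F$ is.

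Two conditions are immediate. The torus $T$ splits over $L$ because it already splits over $E_1 \subseteq L$. The divisibility $r\cdot e(L/F) \in \Z$ follows from the tower identity $e(L/F) = e(L/E_1)\cdot e(E_1/F)$ together with the given $r\cdot e(E_1/F)\in\Z$. For the condition $X \in \ft^*(L)_{=-r}$, I would invoke the base-change injection $\ft^*(E_1)_{=-r} \hookrightarrow \ft^*(L)_{=-r}$ recalled in Section~\ref{sec:duallattice}; this is applicable because $T$ is $E_1$-split, which plays the role of ``$G$ is $E$-split'' in the hypothesis of that discussion with $T$ in place of $G$.

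The main obstacle is proving $u(L/F) < r$. Here I would exploit that $L = E_1 E_2$ makes the joint restriction map
\[
\Gal(L/F) \ra \Gal(E_1/F) \times \Gal(E_2/F),\qquad \sigma \mapsto (\sigma|_{E_1}, \sigma|_{E_2}),
\]
injective, since its kernel is $\Gal(L/E_1)\cap\Gal(L/E_2)=\Gal(L/E_1E_2)=1$. Passing to inertia subgroups gives an injection $\Group(L/F)\hookrightarrow \Group(E_1/F)\times\Group(E_2/F)$. By the surjection in the exact sequence~(\ref{eq:exactnumberupper}) applied with the triple $(E,K,L)$ equal to $(F,E_i,L)$ for $i=1,2$, the $i$-th component carries $\Group(L/F)^s$ into $\Group(E_i/F)^s$ for every $s\ge 0$. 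Picking any $s$ with $\max(u(E_1/F), u(E_2/F)) < s < r$ (which exists since both $u(E_i/F)<r$ by hypothesis), each $\Group(E_i/F)^s$ is trivial, so injectivity forces $\Group(L/F)^s = 1$ and hence $u(L/F)\le s<r$, as required.
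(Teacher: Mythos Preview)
Your proof is correct and follows essentially the same approach as the paper's. The paper treats the first three conditions as evident and only argues for $u(L/F)<r$, via the same injection $\Group(L/F)^s\hookrightarrow\Group(E_1/F)^s\times\Group(E_2/F)^s$ induced by restriction; your version is simply more explicit, in particular in invoking~(\ref{eq:exactnumberupper}) to justify that restriction respects the upper-numbering filtration.
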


\begin{proof}
It will be enough to show that $u(EE'/F) < r$.
Since $E$ and $E'$ are Galois, the assignment $\sigma  \mapsto (\res_E \sigma, \res_{E'}\sigma)$  yields an injective map $\Group(EE'/F) \ira \Group(E/F) \times \Group(E'/F)$.  Thus, for all $s \in \R_{\geq 0}$ we have an injective map $\Group(EE'/F)^s \ira \Group(E/F)^s \times \Group(E'/F)^s$.  It follows that $u(EE'/F) \leq \max(u(E/F),u(E'/F)) < r$.
\end{proof}

Thanks to Lemma~\ref{lem:adatptedreduction},  it will be enough to show $\phi^T_{X,E} = \phi^T_{X,E'}$  when  $E'$ is a finite Galois extension of $E$.   So, we may and do assume that   $E'$ is a finite Galois extension of $E$.

Since for all $s \in \R_{\geq 0}$ we have $\Group(E'/F)_s \cap \Group(E'/E) = \Group(E'/E)_{s}$, it follows that  $\ell(E'/E) \leq \ell(E'/F)$.
 From Corollary~\ref{cor:cdifferentandHH} and the fact that $u(E'/F) < r$ 
we have
\[ \ell(E'/E) \leq \ell(E'/F) = u(E'/F) - c_{E'/F} < r - c_{E'/F}.\]
Thus,
from Lemma~\ref{lem:normrange} 
and the fact that $c_{E'/F}  = c_{E'/E} + c_{E/F}$, the trace map and the norm map behave well with respect to the subgroups of interest:
\[
\Tr_{E'/E}:\ft(E')_{\ge r - c_{E'/F}} \sra\ft(E)_{\ge r-c_{E/F}},\;\;\Nm_{E'/E}:T(E')_{\ge r - c_{E'/F}}\sra T(E)_{\ge r - c_{E/F}}.
\] 
These maps induce maps on the quotients which we denote by the same name
\[
\Tr_{E'/E}:\ft(E')_{=r-c_{E'/F}}\ra\ft(E)_{=r-c_{E/F}}\;\;\Nm_{E'/E}:T(E')_{=r-c_{E'/F}}\ra T(E)_{=r-c_{E/F}}.
\]
We then have the following commutative diagram in which the vertical maps are the Moy-Prasad isomorphisms
\[
\begin{tikzcd}
\ft(E')_{=r-c_{E'/F}}\arrow[r, "\Tr_{E'/E}"]\arrow[d, "\rotatebox{90}{$\sim$}"]&\ft(E)_{=r-c_{E/F}}\arrow[d, "\rotatebox{90}{$\sim$}"]\\
T(E')_{=r-c_{E'/F}}\arrow[r, "\Nm_{E'/E}"]&T(E)_{=r-c_{E/F}}
\end{tikzcd}
\]
We have proved the following lemma

\begin{lemma}\label{lem:norm}  
Suppose $F \leq E \leq E'$ is a tower of finite Galois extensions such that $E$ and $E'$ are $(X,r,T)$-adapted.  We have $\chi_{X,E'}^T=\chi_{X,E}^T\circ\Nm_{E'/E}$.
\end{lemma}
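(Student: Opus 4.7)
The plan is a diagram chase through the commutative square displayed immediately above the lemma, combined with the defining formula~\eqref{eq:characteronlieT} and transitivity of the trace.

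First I would fix $V' \in \ft(E')_{\geq r - c_{E'/F}}$, let $W$ denote its image under the Moy-Prasad isomorphism $\ft(E')_{=r - c_{E'/F}} \xra{\sim} T(E')_{=r - c_{E'/F}}$, and unfold the definitions to obtain
\[ \chi_{X,E'}^T(W) \;=\; \Lambda_{E'}\bigl((V',X)\bigr) \;=\; \Lambda_F\bigl(\Tr_{E'/F}(V',X)\bigr). \]

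Next I would invoke transitivity $\Tr_{E'/F} = \Tr_{E/F} \circ \Tr_{E'/E}$ together with the compatibility $\Tr_{E'/E}\bigl((V',X)\bigr) = \bigl(\Tr_{E'/E}(V'),X\bigr)$, which holds precisely because $X$ lies in $\ft^*(E)$ and not merely in $\ft^*(E')$: writing $V' = \sum_i V_i \otimes e'_i$ with $V_i \in \ft(E)$ and $e'_i \in E'$, both sides evaluate to $\sum_i (V_i,X) \cdot \Tr_{E'/E}(e'_i)$. Taken together, these rewrite $\chi_{X,E'}^T(W)$ as $\Lambda_E\bigl((\Tr_{E'/E}(V'),X)\bigr)$, which by the definition of $\chi_{X,E}^T$ equals $\chi_{X,E}^T$ evaluated at the Moy-Prasad lift of $\Tr_{E'/E}(V')$.

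The final step is to identify that Moy-Prasad lift with $\Nm_{E'/E}(W)$ modulo $T(E)_{>r-c_{E/F}}$; this is precisely the commutative diagram in Lemma~\ref{lem:normrange}(\ref{it:norm2}), which is reproduced explicitly in the paragraph just above the lemma. Substituting yields $\chi_{X,E'}^T(W) = \chi_{X,E}^T(\Nm_{E'/E}(W))$, as required. I do not anticipate a real obstacle: the supporting inequality $\ell(E'/E) \leq \ell(E'/F) = u(E'/F) - c_{E'/F} < r - c_{E'/F}$, together with the $E$-rationality of $X$, have already been extracted in the discussion preceding the lemma, so the argument reduces to an essentially formal diagram chase.
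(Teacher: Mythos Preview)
Your proposal is correct and follows exactly the approach the paper has in mind. The paper's ``proof'' consists entirely of the setup preceding the lemma statement---establishing the inequality $\ell(E'/E) < r - c_{E'/F}$ and the commutative square intertwining trace and norm via the Moy-Prasad isomorphisms---after which it simply declares ``We have proved the following lemma''; you have made the implicit diagram chase explicit, including the crucial use of $X \in \ft^*(E)$ to pull the trace through the pairing.
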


The following Lemma allows us to pass to the 
Galois side.

\begin{lemma}\label{lem:LCFT}  In this lemma there are no tameness restrictions on $T$.   Let $E'/E$ be a finite Galois extension of non-archimedean local fields and let $T$ be a torus over $E$. Consider $\Nm_{E'/E}:T(E')\ra T(E)$. Suppose $\chi:T(E)\ra\Cc$ is a character and $\varphi\in H^1(W_E,T^{\vee})$ is its Langlands parameter. Then $\res_{W_{E'}} \varphi$ is the Langlands parameter of $\chi\circ\Nm_{E'/E}:T(E')\ra\Cc$.
\end{lemma}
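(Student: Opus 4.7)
The strategy is to reduce the claim to the classical norm--reciprocity compatibility of local class field theory, which says that the diagram
\[
\begin{tikzcd}
(E')^\times \ar[r, "\operatorname{rec}_{E'}"] \ar[d, "\Nm_{E'/E}"'] & W_{E'}^{\mathrm{ab}} \ar[d, "\iota"] \\
E^\times \ar[r, "\operatorname{rec}_E"'] & W_E^{\mathrm{ab}}
\end{tikzcd}
\]
commutes, where $\iota$ is induced by the inclusion $W_{E'}\hookrightarrow W_E$. Once this is granted, the lemma becomes a formal consequence of the construction of the LLC for tori.

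First I would dispose of the case when $T$ is $E$-split. In this case $T(E)=X_*(T)\otimes_{\Z}E^\times$ and $T^\vee=\Hom(X^*(T),\Cc)$ has trivial $W_E$-action, so tensor--hom adjunction identifies a character $\chi$ of $T(E)$ with a homomorphism $\tilde{\chi}\colon E^\times\to T^\vee$; the LLC for $T$ reads $\varphi|_{W_E^{\mathrm{ab}}}\circ\operatorname{rec}_E=\tilde{\chi}$. Since $\chi\circ\Nm_{E'/E}$ corresponds under the same adjunction to $\tilde{\chi}\circ\Nm_{E'/E}$, the norm--reciprocity square gives
\[
\tilde{\chi}\circ\Nm_{E'/E}=\varphi|_{W_E^{\mathrm{ab}}}\circ\operatorname{rec}_E\circ\Nm_{E'/E}=\varphi|_{W_E^{\mathrm{ab}}}\circ\iota\circ\operatorname{rec}_{E'}=(\res_{W_{E'}}\varphi)|_{W_{E'}^{\mathrm{ab}}}\circ\operatorname{rec}_{E'},
\]
which is exactly the desired identification of Langlands parameters.

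For general $T$ I would choose a finite Galois extension $L/E$ containing $E'$ that splits $T$, and invoke the explicit construction of the LLC for tori via the relative Weil group $W_{L/E}$ and Tate--Nakayama duality (as in \cite[\S7]{Yu09}). In that construction the LLC for $T/E$ descends from the split case for $T_L/L$: the norm $\Nm_{E'/E}$ corresponds under Tate--Nakayama to corestriction in Tate cohomology from $\Gal(L/E')$ to $\Gal(L/E)$, while $W_{E'}\hookrightarrow W_E$ induces the corresponding restriction map on $H^1(W_{L/\bullet},T^\vee)$, and cup product with the fundamental class intertwines these two operations. Combined with the split-torus case applied to $T_L$, this yields the general statement. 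The main obstacle is purely organizational---the LLC for tori admits several equivalent descriptions and one must fix a single formulation and chase the naturality squares in it carefully---but once a construction is pinned down the required compatibility is essentially tautological.
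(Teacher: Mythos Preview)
Your split-case argument is clean and correct. For the general case your Tate--Nakayama sketch points in the right direction, but the sentence ``cup product with the fundamental class intertwines these two operations'' (corestriction on $\hat H^0$ with restriction on $H^1$) is not quite accurate as stated: cup product with the fundamental class is compatible with restriction on both sides (using $\res_{\Gal(L/E')}u_{L/E}=u_{L/E'}$) and with corestriction on both sides (by the projection formula). What rescues you is the extra duality: the LLC identifies $H^1(W_E,T^\vee)$ with $\Hom(T(E),\Cc)$, not with $T(E)$ itself, and passing to $\Hom(-,\Cc)$ converts the corestriction map $\Nm_{E'/E}\colon T(E')\to T(E)$ into its transpose, which then matches restriction on $H^1$. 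So your outline is salvageable, but it needs this additional step spelled out to be a proof rather than a plausibility argument.

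The paper proceeds differently. Instead of invoking Tate--Nakayama for an arbitrary $T$, it embeds $T$ into the induced torus $S=\Res_{L/E}(T\times_EL)$ (a product of copies of $\Res_{L/E}\Gm$), uses functoriality of the LLC along $T\hookrightarrow S$ and $S^\vee\twoheadrightarrow T^\vee$ to reduce to $T=\Res_{L/E}\Gm$, and then verifies the claim for that torus by writing out Shapiro's lemma $H^1(W_E,\Ind_{W_L}^{W_E}\Cc)\cong H^1(W_L,\Cc)$ on both the $E$- and $E'$-sides and checking that restriction $W_{E'}\hookrightarrow W_E$ corresponds to the explicit norm formula $(\ell_{\sigma_{\bar g}})\mapsto\prod\sigma_{\bar g}\cdot\ell_{\sigma_{\bar g}}$ on $T(E')\cong\prod_\sigma L^\times$. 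The advantage of this route is that it replaces the somewhat opaque compatibility of Tate cup products with a concrete diagram chase through Shapiro isomorphisms; the advantage of your route is that, once the duality step is made precise, it treats all tori uniformly without the auxiliary embedding.
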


\begin{remark}
    While preparing this manuscript, a  version of Lemma~\ref{lem:LCFT} appeared in~\cite[Prop. 2.1]{SX25}.
\end{remark}

\begin{proof} 
Following \cite[\S7.6]{Yu09} put $S=\Res_{L/E}(T\times_EL)$ for some finite Galois extension $L/E$ splitting $T$.  We may and do assume that $L$ contains $E'$.   We have $T\ira S=(\Res_{L/E}\mathbb{G}_m)^{\dim T}$.  Since $T(E)$ injects into $S(E)$, 
our character $\chi$ extends to some $\til{\chi}:S(E)\ra\Cc$, which corresponds by local Langlands for $S$ to some $\til{\varphi}\in H^1(W_E,S^{\vee})$. Functoriality of local Langlands for tori then says that $\varphi$
is the composition of $\til{\varphi}$ with $S^{\vee}\sra T^{\vee}$.
The desired assertion in the lemma for $\chi$ and $\varphi$ then follows from that of $\til{\chi}$ and $\til{\varphi}$. That is, we may assume $T$ is of the form $T=\Res_{L/E}\mathbb{G}_m$.

Choose a set $\sigma=(\sigma_{\bar{g}})$ of representatives of the left $W_{E'}$-cosets in $W_E$, so $W_E = \bigsqcup \sigma_{\bar{g}} W_{E'}$. 
Local Langlands for $T$ as an $E$-torus matches characters on $T(E)=L^{\times}$  with elements in $H^1(W_E,T^{\vee})=H^1(W_E,\Ind_{W_L}^{W_E}\Cc)\cong H^1(W_L,\Cc)$ via Shapiro's lemma and local class field theory.
Similarly, Local Langlands for $T$ as an $E'$-torus matches characters on $T(E')=\prod_{\sigma} L^{\times}$ with elements in 
\begin{equation*}
    \begin{split}
         H^1(W_{E'},T^{\vee}) &=H^1(W_{E'},\Ind_{W_L}^{W_E}\Cc)
=H^1(W_{E'},\Ind_{W_L}^{W_{E'} }\Ind_{W_{E'}}^{W_E}\Cc)  \\
&\cong H^1(W_{L},\Ind_{W_{E'}}^{W_E}\Cc) 
=  \bigoplus_{\sigma} H^1(W_{L},\Cc).
 \end{split}
\end{equation*}

Moreover, tracing through these isomorphisms shows that the following diagram
\[
\begin{tikzcd}
 H^1(W_E,T^{\vee})\arrow[r, "\sim"]\arrow[d] & H^1(W_L,\Cc)\arrow[d] \\
H^1(W_{E'},T^{\vee})\arrow[r, "\sim"] & \bigoplus_{\sigma} H^1(W_L,\Cc) 
\end{tikzcd}
\]
commutes  where the left vertical map is the natural restriction map and the right vertical map sends $\alpha$ to $(\sigma_{\bar{g}}.\alpha)$.
We conclude that 
the map  from $\widehat{T(E)}$ to $\widehat{T(E')}$ that is dual to the restriction map $ H^1(W_E,T^{\vee}) \ra H^1(W_{E'},T^{\vee})$ is given by $\chi \mapsto \chi \circ \Nm_{E'/E}$ where  $\Nm_{E'/E} \colon \prod_{\sigma}(L^{\times})\cong T(E')\ra T(E)=L^{\times}$ is given by  $(\ell_{\sigma_{\bar{g}}})\mapsto\prod_{}\sigma_{\bar{g}}.\ell_{\sigma_{\bar{g}}}$.
\end{proof}

\begin{corollary}\label{cor:base}  If
 $E/F$ and $E'/F$ are two $(X,r,T)$-adapted Galois extensions,
Then $\phi_{X,E}^T = \phi_{X,{E'}}^T$.
\end{corollary}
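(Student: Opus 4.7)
The plan is to reduce to the tower case using Lemma~\ref{lem:adatptedreduction}, and then combine Lemma~\ref{lem:norm} with Lemma~\ref{lem:LCFT} to transport Langlands parameters from $W_E$ to $W_{E'}$. First, by Lemma~\ref{lem:adatptedreduction} the compositum $EE'$ is also $(X,r,T)$-adapted, so it suffices to prove $\phi_{X,E}^T = \phi_{X,EE'}^T$ whenever $F \subset E \subset E'$ is a tower of finite Galois extensions with both $E$ and $E'$ being $(X,r,T)$-adapted. Hence I may assume from the outset that $E \subset E'$.

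Second, fix any character $\psi\colon T(E) \to \Cc$ whose restriction to $T(E)_{\ge r-c_{E/F}}$ is $\chi_{X,E}^T$, and let $\varphi\colon W_E \to T^{\vee}$ be the Langlands parameter of $\psi$. By the construction in Section~\ref{sec:constructphi}, together with the equality $\Group_E^{r-c_{E/F}} = \Group_F^r$ coming from Lemma~\ref{lem:TFAE} (valid because $r > u(E/F)$), we have $\phi_{X,E}^T = \varphi|_{\Group_F^r}$. The key idea now is to describe $\phi_{X,E'}^T$ via this \emph{same} $\varphi$. The natural candidate extension of $\chi_{X,E'}^T$ to $T(E')$ is $\psi \circ \Nm_{E'/E}$; Lemma~\ref{lem:norm} (applied to the tower $E \subset E'$) confirms that its restriction to $T(E')_{\ge r - c_{E'/F}}$ equals $\chi_{X,E'}^T$. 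Hence $\phi_{X,E'}^T$ may be computed from the Langlands parameter of $\psi \circ \Nm_{E'/E}$.

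Third, Lemma~\ref{lem:LCFT} identifies the Langlands parameter of $\psi \circ \Nm_{E'/E}$ as $\varphi|_{W_{E'}}$. Restricting further to $\Group_F^r$, which sits inside $W_{E'}$ via the identification $\Group_F^r = \Group_{E'}^{r - c_{E'/F}}$ of Lemma~\ref{lem:TFAE} (valid because $r > u(E'/F)$), I obtain
\[
\phi_{X,E'}^T \;=\; \bigl(\varphi|_{W_{E'}}\bigr)\big|_{\Group_F^r} \;=\; \varphi|_{\Group_F^r} \;=\; \phi_{X,E}^T,
\]
which finishes the proof. The argument is essentially formal once the groundwork is in place, and I do not expect a real obstacle here: the genuinely substantive inputs, namely the norm/trace compatibility on Moy-Prasad filtrations (Lemma~\ref{lem:norm}), the local Langlands compatibility under base change for tori (Lemma~\ref{lem:LCFT}), and the identification of upper numbering subgroups across the extension (Lemma~\ref{lem:TFAE}), have already been established, and the remaining bookkeeping consists of tracking $\Group_F^r$ as a common subgroup of $W_E$ and $W_{E'}$.
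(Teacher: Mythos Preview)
Your proof is correct and follows exactly the same approach as the paper: reduce via Lemma~\ref{lem:adatptedreduction} to a tower $E\subset E'$, then combine Lemma~\ref{lem:norm} with Lemma~\ref{lem:LCFT}. The paper's own proof is the two-line version of what you have written out in full.
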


\begin{proof} It is enough to show this when $E'/E$ is Galois. The assertion follows from Lemma~\ref{lem:norm} and Lemma~\ref{lem:LCFT}.
\end{proof}

\subsection{$T$-toral tame Moy-Prasad types of depth $r$}  \label{sec:Ttoraltype}

 Suppose  $(x,X) \in \MP_r$ and   $T$ is a maximal $F^t$-split torus in $G$.   We will say that $(x,X)$ is \emph{$T$-toral}    provided that there exists  a finite Galois extension $E$ of $F$  such that 
\begin{itemize} 
\item $T$ is defined over $E$,
\item $E$ is $(X,r,T)$-adapted, and
\item $x$ belongs to the building of $T(E^t)$ in $\cB(G(E^t))$.
\end{itemize}
Recall that $X \in \fg^*(F)_{x=-r}$, so   $X\in\ft^*({E})_{=-r}$ means $X\in\ft^*(E)_{=-r}\cap\fg^*(F)_{x=-r}$ where the intersection is taken in $\fg^*(E)_{x=-r}$.

The continuous homomorphism $\phi^T_X =\phi_{X,E}^T \colon \Group_F^r \ra T^{\vee}$  constructed  in Section~\ref{sec:constructphi} above is what we propose to associate to the pair $(x,X)$.  More precisely:

\begin{conjecture}\label{conj:desi-pre} Recall that $G$ splits over $F^t$.  Suppose an irreducible smooth representation $\pi$ of $G(F)$ has a nondegenerate  $T$-toral  Moy-Prasad type $(x,X)$ of depth $r>0$ with $r\in \Q_{>0}$.  Choose a finite $(X,r,T)$-adapted
extension $E$ of $F$ such that $T$ is an $E$-torus, $x$ belongs to the building of $T(E^t)$ over $E$, and  $X\in\ft^*(E)_{=-r}$. Then the restriction of a
Langlands parameter for $\pi$ to $\Group_F^r$ is $G^{\vee}$-conjugate to $\phi_{X}^T  = \phi_{X,E}^T$.
\end{conjecture}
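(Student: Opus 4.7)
The plan is to verify the conjecture by inputting Kaletha's construction of the local Langlands correspondence for semisimple supercuspidal representations, in which $\pi$ arises from a datum $(T, \chi)$ consisting of a tamely ramified elliptic maximal $F$-torus $T \subset G$ and a character $\chi \colon T(F) \to \Cc$ satisfying Kaletha's genericity hypotheses. First, one must match the Moy-Prasad datum of the conjecture with this Kaletha datum: the depth of $\pi$ equals the depth $r$ of $\chi$, and the restriction $\chi|_{T(F)_{\geq r}}$ factors through $T(F)_{=r}$. Via the Moy-Prasad isomorphism $\log \colon T(F)_{=r} \xrightarrow{\sim} \ft(F)_{=r}$ and the pairing induced by $\Lambda_F$, this restriction corresponds to a unique class $X \in \ft^*(F)_{=-r}$, viewed inside $\fg^*(F)_{x=-r}$, where $x$ is the unique point of $\cB(T,F) \subset \cB(G,F)$ (a vertex, since $T$ is elliptic). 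The pair $(x,X)$ is then a nondegenerate $T$-toral Moy-Prasad type of $\pi$, and after $G(F)$-conjugacy one may identify the $(x,X)$ of the conjecture with this pair.

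Next, I would use the explicit shape of Kaletha's parameter. One constructs an $L$-embedding ${}^j\colon {}^LT \hookrightarrow {}^LG$ from a set of tame $\chi$-data, twists $\chi$ by a rectifying character $\epsilon$ of depth strictly less than $r$, and sets $\varphi_\pi = {}^j \circ \varphi_{\chi \cdot \epsilon^{-1}}$, where $\varphi_{\chi \cdot \epsilon^{-1}} \colon W_F \to {}^LT$ is the LLC parameter of $\chi \cdot \epsilon^{-1}$. Because $\epsilon$ has depth $< r$ and the tame $\chi$-data only affect the $L$-embedding at depth zero, the restriction of $\varphi_\pi$ to $\Group_F^r$ coincides, up to $G^\vee$-conjugacy, with the restriction of $\varphi_\chi$ composed with an embedding $T^\vee \hookrightarrow G^\vee$.

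Finally, I would apply the depth preservation of LLC for tame tori~\cite[\S7.5 and \S7.10]{Yu09} already used in Section~\ref{sec:constructphi}. After choosing a finite Galois $(X,r,T)$-adapted extension $E/F$, the character $\chi \circ \Nm_{E/F}$ of $T(E)_{\geq r-c_{E/F}}$ agrees with $\chi^T_{X,E}$ by the defining relation between $\chi$ and $X$ together with Lemma~\ref{lem:normrange}, so its LLC parameter is $\phi_{X,E}^T$ by construction. Combining this with Lemma~\ref{lem:LCFT}, which identifies $\res_{W_E}\varphi_\chi$ with the parameter of $\chi \circ \Nm_{E/F}$, and with Lemma~\ref{lem:TFAE} giving $\Group_E^{r-c_{E/F}} = \Group_F^r$, we obtain $\varphi_\chi|_{\Group_F^r} = \phi_X^T$ up to conjugacy.

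The main obstacle will be the careful depth accounting for the auxiliary ingredients in Kaletha's construction: one must verify that the rectifying character and the tame $\chi$-data contribute only at depths strictly less than $r$ to $\varphi_\pi|_{\Group_F^r}$. When $r \notin \Z_{(p)}$, this comparison must be carried out across tame extensions where the upper-numbering filtration jumps, and the normalized Hasse--Herbrand formalism of Section~\ref{sec:nlcft}, together with Lemma~\ref{lem:TFAE}, is essential for translating depth bounds between $\Group_F^s$ and $\Group_E^{\psi_{E/F}(s)}$ after base change.
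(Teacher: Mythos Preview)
The statement in question is a conjecture, and the paper does not prove it in general. What the paper does (in \S\ref{subsec:evidence}) is verify the stronger Conjecture~\ref{conj:desinew}, and hence Conjecture~\ref{conj:desi-pre}, for Kaletha's local Langlands correspondence for semisimple supercuspidal representations, under the standing hypotheses $\operatorname{char}(F)=0$, $p>2$ and not bad for $G$, and $p \nmid |\pi_1(G_{\mathrm{der}})|\cdot|\pi_0(Z(G))|$. Your plan targets exactly this special case, so it is not a proof of the conjecture but the same partial verification the paper offers; you should state these hypotheses explicitly, since Kaletha's construction requires them.

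For that special case your approach is essentially the paper's, but you make it harder on yourself in two places. First, your ``main obstacle'' of tracking how the $\chi$-data and the rectifying character $\epsilon$ affect $\varphi_\pi|_{\Group_F^r}$ is bypassed in the paper by a single clean observation: on the entire wild inertia $\Group_F^{0+}$, Kaletha's parameter already satisfies $\varphi|_{\Group_F^{0+}} = (T^\vee \hookrightarrow G^\vee)\circ\varphi_\theta|_{\Group_F^{0+}}$ up to $G^\vee$-conjugacy, where $\varphi_\theta$ is the LLC-for-tori parameter of the full character $\theta$. There is no need to analyze the $L$-embedding and $\epsilon$ separately. Second, the paper stays over $F$ throughout: since $T$ is already an $F$-torus and $X\in\ft^*(F)_{=-r}$, depth preservation for tamely ramified tori over $F$ gives $\varphi_\theta|_{\Group_F^r}=\phi^T_X$ directly, and no passage to an adapted $E$ via $\Nm_{E/F}$ and Lemma~\ref{lem:LCFT} is needed. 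Your detour through $E$ is correct but redundant here.
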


\begin{remark} \label{rem:indofaddchar} The additive character $\Lambda_F:k_F\ra\Cc$ appears twice in Conjecture \ref{conj:desi-pre}, namely in \eqref{eq:MPtypeongroup} and in \eqref{eq:characteronlieT}.  The two appearances cancel out, and it follows that the validity of Conjecture \ref{conj:desi-pre} is independent of the choice of $\Lambda_F$.
\end{remark}

\section{Depth-\texorpdfstring{$r$}{r} Deligne-Lusztig parameters}
\label{sec:MPtypetoDLparam}
Suppose $(x,X)$ is a Moy-Prasad type of depth $r$.  Since this section is concerned only about the Lie algebra and its dual, we can assume $r \in \Q$ (rather than $r \in \Q_{>0}$).

We present a way to pass from the Moy-Prasad type $(x,X)$ to data on the Galois side.
This proceeds in two steps: in this section we construct some data on the $p$-adic side, called depth-$r$ Deligne-Lusztig parameters, and in Section~\ref{sec:C2Bnew} we match this data bijectively with some data on the Galois side.

Recall that $c_{\Lambda_F} = 0$.

\subsection{A construction of depth-$r$ Deligne-Lusztig parameters}  \label{constructionofDL}
Suppose $x \in \cB(F)$ and $r \in \Q$.
  Our goal in Section~\ref{constructionofDL} is to construct a map from $\fg^*(F)_{x = -r}$ to $\bar{\fa}^*  \quo W$ where $\bar{\fa}^* $ is the dual space of the Lie algebra of the reductive quotient of $A(F^t)$ and $W$ is the Weyl group.

Suppose $(x,X) \in \MP_r$.

\begin{definition}  A pair $(\alpha, E)$, where $E/F$ is a finite Galois extension and $\alpha \in E$, is said to be \emph{$(x,X)$-adapted}   provided that 
\begin{enumerate}
    \item   $r \cdot e(E/F) \in \Z$, 
    \item  $u(E/F) < r$,
    \item $G$ splits over $E$, and 
    \item $\val(\alpha) = r$.
\end{enumerate}
\end{definition}

 Thanks to Lemma~\ref{lem:Eexists}, we can choose a finite Galois extension
$E/F$  such that $r \cdot e(E/F) \in \Z$ and
$u(E/F) < r$.
Since $G$ splits over a tamely ramified extension, thanks to Lemma~\ref{lem:Eexists} we may replace $E$ with $LE$ where $L$ is a finite tame extension of $F$ over which $G$ splits.  Since $G$ splits over $E = LE$, so too does  $A$~\cite[Lemma~1.4.2]{debacker:totally}.   Choose $\alpha \in E$ such that $\val(\alpha) = r$.  Note that the pair $(\alpha, E)$ is $(x,X)$-adapted.

Choose $h \in G(F)$ such that $hx \in \cA$.

 Let $\bar{A}$ denote the reductive quotient of $A(F^t)$, this is  a torus over $\bar{k}$.  Since $A$ is $E$-split, we (canonically) identify $\bar{A}$ with the reductive quotient of $A(E^u)$.   Write $\fa^*:=(\Lie A)^*$ and $\bar{\fa}^* :=(\Lie\bar{A})^*$ for the  dual spaces of $\Lie A$ and $\Lie\bar{A}$, respectively. The torus $\bar{A}$ is naturally a maximal torus of the reductive quotient $G(E^u)_{hx=0}$. Consider $W:=N_G(A)/A$ and $W^E_{hx}:=N_{G(E^u)_{hx=0}}(\bar{A})/\bar{A}$  as abstract groups.   
Note that $W^E_{hx}$ agrees with the image of $N_{G(E^u)_{{hx}\ge 0}}(A)$ in $W$ (see~\cite[Lemma~7.2.1]{debacker:totally}). We have an embedding $\bar{\fa}^*\ira\fg^*(E^u)_{hx=0}$ given by extending trivially on the nontrivial root spaces in $\fg(E^u)_{hx = 0}$ with respect to $\bar{A}$. This induces a morphism of varieties over $\bar{k} = \bar{k}_E$
\begin{equation}\label{eq:dualChnew}
\bar{\fa}^*\quo W^E_{hx}\ra\fg^*(E^u)_{hx=0}\quo G(E^u)_{hx=0}.
\end{equation}
By \cite[Thm. 4]{KW76} (for $p>2$) and \cite[Thm. 1]{ST25} (for $p=2$), the map \eqref{eq:dualChnew} is a bijection on $\bar{k}$-points, in fact an isomorphism when $p>2$. Let $E'$ denote the maximal tamely ramified subextension in $E/F$ and consider the following commutative diagram:
\begin{equation}\label{eq:squaresnew}
\begin{tikzcd}
 \fg^*(F)_{x=-r}\arrow[d,hook,"\Int(h)"]& &&\\
 \fg^*(E')_{hx=-r}\arrow[d,hook]& & &\\
   \fg^*(E^u)_{hx=-r}\arrow[r,"\times\alpha"]&\fg^*(E^u)_{hx=0}\arrow[r]&\fg^*(E^u)_{hx=0}\quo G(E^u)_{hx=0}\\
     \fa^*(E^u)_{=-r}\arrow[r,"\times\alpha"]\arrow[u, hook]&\fa^*(E^u)_{=0}=\bar{\fa}^*\arrow[u, hook]\arrow[r]&\bar{\fa}^*\quo W^E_{hx}\arrow[u, "\sim"'{sloped, anchor=north}]\arrow[u, "\eqref{eq:dualChnew}"]\arrow[d, two heads]\\
 &  &\bar{\fa}^*\quo W\\
\end{tikzcd}
\end{equation}
The map $\Int(h)$ takes $\bar{Y} \in   \fg^*(F)_{x=-r}$ to $\overline{\Ad^*(h)Y} \in  \fg^*(F)_{hx=-r} \subset  \fg^*(E')_{hx=-r}$.  This last inclusion holds because $E'$ is a tame extension of $F$.  The inclusion $\fg^*(E')_{hx=-r} \subset     \fa^*(E^u)_{=-r}$ holds because $G$ is $E'$-split.
Denote by $i_{E,h,\alpha,x}:\fg^*(F)_{x=-r}\ra\bar{\fa}^*\quo W$ the composition (through the inverse of \eqref{eq:dualChnew}).

\begin{lemma}  \label{lem:indofh}
    The map  $i_{E,h,\alpha,x}:\fg^*(F)_{x=-r}\ra\bar{\fa}^*\quo W$ is independent of the choice of $h \in G(F)$.  Thus, we will drop the $h$ from the notation and denote the map by $i_{E,\alpha,x}:\fg^*(F)_{x=-r}\ra\bar{\fa}^*\quo W$.
\end{lemma}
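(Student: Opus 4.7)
The plan is to show, for any two choices $h_1, h_2 \in G(F)$ with $h_1 x, h_2 x \in \cA$, that the induced maps agree by decomposing $g := h_2 h_1^{-1}$ via Bruhat--Tits theory. First I would write $g = k n$ with $n \in N_G(A)(F)$ satisfying $n \cdot h_1 x = h_2 x$ and $k \in G(F)_{h_2 x \ge 0}$. To obtain this decomposition, I would use that the apartments $\cA$ and $g\cA$ both contain the point $h_2 x$, invoke the building axiom that two apartments sharing a point are carried to one another by an element of the stabilizer of that point to produce $k' \in G(F)_{h_2 x \ge 0}$ with $k'(g\cA) = \cA$, and set $n := k' g$ (automatically in $N_G(A)(F)$ since it stabilizes $\cA$ setwise) and $k := (k')^{-1}$.

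With this decomposition in hand I would write $\Ad^*(h_2) X = \Ad^*(k)\,\Ad^*(n)\,\Ad^*(h_1) X$ and treat the two factors separately. The factor $k$ lies in $G(F)_{h_2 x \ge 0} \subset G(E^u)_{h_2 x \ge 0}$, hence acts on $\fg^*(E^u)_{h_2 x = 0}$ via its image in $G(E^u)_{h_2 x = 0}$; this action is killed in the GIT quotient of~\eqref{eq:squaresnew}, so the image of $\alpha\Ad^*(h_2) X$ there agrees with that of $\alpha\Ad^*(n)\Ad^*(h_1) X$. For the $n$-factor I would invoke the naturality of~\eqref{eq:squaresnew}: conjugation by $n \in N_G(A)(F) \subset N_G(A)(E^u)$ intertwines the Moy--Prasad data at $h_1 x$ with that at $h_2 x$, acts on $\bar A$ (hence on $\bar{\fa}^*$) via its image $\bar w \in W$, and is compatible with the Chevalley isomorphism~\eqref{eq:dualChnew}. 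Consequently, if $[Y_1] \in \bar{\fa}^*\quo W^E_{h_1 x}$ is the class attached to $(h_1, \alpha, x, X)$, then the class attached to $(h_2, \alpha, x, X)$ at the point $h_2 x$ is $\bar w \cdot [Y_1] \in \bar{\fa}^*\quo W^E_{h_2 x}$.

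Projecting further along the rightmost column of~\eqref{eq:squaresnew} collapses the ambiguity: in $\bar{\fa}^* \quo W$ the classes $[Y_1]$ and $\bar w[Y_1]$ coincide, yielding $i_{E, h_1, \alpha, x}(X) = i_{E, h_2, \alpha, x}(X)$ for every $X \in \fg^*(F)_{x = -r}$. The main obstacle I anticipate is executing the Bruhat--Tits decomposition carefully, since $\cA$ is the apartment of $A^F$ rather than of $A$ itself (which is in general not $F$-split), so identifying the stabilizer of $\cA$ in $G(F)$ with (a suitable version of) $N_G(A)(F)$ and tracking its image in the abstract Weyl group $W$ requires some bookkeeping. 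Everything else should be formal, following from the $N_G(A)$-equivariance of the Moy--Prasad filtration, multiplication by $\alpha$, and the Chevalley isomorphism~\eqref{eq:dualChnew}.
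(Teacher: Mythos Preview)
Your approach is essentially the same as the paper's: decompose the transition element into a parahoric part (killed in the GIT quotient $\fg^*(E^u)_{hx=0}\quo G(E^u)_{hx=0}$) and a normalizer-of-$A$ part (killed upon passing to $\bar{\fa}^*\quo W$). The paper obtains this decomposition in two steps---first finding $m \in N_{G(F)}(A)$ with $mh'x = hx$, then invoking the affine Bruhat decomposition for $\Stab_{G(F)}(hx)$ to write $(mh')h^{-1} = n\ell$---rather than via the apartment-swapping axiom you use, and it likewise does not elaborate on the $N_G(A)$-versus-stabilizer-of-$\cA$ bookkeeping you rightly flag.
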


    \begin{proof}
        Suppose $h' \in G(F)$ such that $h'x$ also  belongs to $\cA$.

Since $hx$ and $h'x$ both belong to $\cA$ and are conjugate by an element of $G(F)$, there exists $m \in N_{G(F)}(A)$ such that $mh'x = hx$.    Since $(mh') h^{-1} \in \Stab_{G(F)}(hx)$, from the affine Bruhat-decomposition there exist $n \in N_{\Stab_{G(F)}(hx)}(A)$ and $\ell \in G(F)_{hx \geq 0}$ such that $(mh') h^{-1} = n \ell$.   That is, $n^{-1} m h' = \ell h$.

Since $n^{-1} m \in N_{G(F)}(A)$ and the target space of both $i_{E,n^{-1}m h',\alpha,x}$ and $ i_{E,h',\alpha,x}$  is $\bar{\fa}^*\quo W$, we observe that  $i_{E, n^{-1}m h',\alpha,x} = i_{E,h',\alpha,x}$.   Thus, we may and do assume $h' = \ell h$.  Note that this implies that $h'x = hx$.

Choose $Y \in \fg^*(E^u)_{x \geq -r}$.
Since the images of $\overline{\alpha \Ad^*(h)Y}$ and $\overline{\alpha \Ad^*(h')Y} = \Ad^*(\bar{\ell})(\overline{\alpha \Ad^*(h)Y)}$ 
 in  $\fg^*(E^u)_{hx=0}\quo G(E^u)_{hx=0}$ agree, we conclude that $i_{E,h',\alpha,x} = i_{E,h,\alpha,x}$.
\end{proof}

\begin{lemma}  \label{lem:indofhandE}
    If $E'/F$ is a finite Galois extension such that $(\alpha, E')$  is $(x,X)$-adapted, then $i_{E,\alpha,x} = i_{E',\alpha,x}$.
    Thus, we will drop the $E$ from the notation and denote the map by $i_{\alpha,x}:\fg^*(F)_{x=-r}\ra\bar{\fa}^*\quo W$.
\end{lemma}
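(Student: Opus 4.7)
The plan is to first reduce to the nested case $E \subseteq E'$ by showing that the compositum of two $(x,X)$-adapted extensions (with the common $\alpha$) is again $(x,X)$-adapted. Three of the four defining conditions are immediate: $e(E/F)$ divides $e(EE'/F)$, so $r\cdot e(EE'/F)\in\Z$; the group $G$ still splits over the larger field; and $\val(\alpha)=r$ is unaffected. The only nontrivial condition, $u(EE'/F)<r$, follows exactly as in Lemma~\ref{lem:adatptedreduction}: the restriction map $\Group(EE'/F)\hookrightarrow\Group(E/F)\times\Group(E'/F)$ yields $\Group(EE'/F)^s\hookrightarrow\Group(E/F)^s\times\Group(E'/F)^s$ for every $s\geq 0$, so $u(EE'/F)\leq\max(u(E/F),u(E'/F))<r$. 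Replacing each of $E$ and $E'$ by $EE'$ in turn, it suffices to prove $i_{E,\alpha,x}=i_{E',\alpha,x}$ under the assumption $E\subseteq E'$.

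Next I would compare the two realizations of diagram~\eqref{eq:squaresnew}. My claim is that there is a natural morphism of diagrams from the one for $E$ to the one for $E'$, column-by-column given by: (i) the inclusions $\fg^*(E^u)_{hx=-r}\hookrightarrow\fg^*((E')^u)_{hx=-r}$ and $\fg^*(E^u)_{hx=0}\hookrightarrow\fg^*((E')^u)_{hx=0}$, together with the analogous inclusions at the $\fa^*$ level, afforded by Section~\ref{sec:duallattice} (using that $G$, and hence $A$, is split over $E$); (ii) the identity on $\bar{\fa}^*$, which is intrinsically the dual Lie algebra of the reductive quotient of $A(F^t)$, independent of the splitting field chosen; and (iii) the canonical identification of the reductive quotients $G(E^u)_{hx=0}$ and $G((E')^u)_{hx=0}$ as reductive $\bar{k}$-groups, which holds because both are computed from the same split $G$ over the common residue field $\bar{k}$, under which identification the finite Weyl groups $W^E_{hx}$ and $W^{E'}_{hx}$ coincide.

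I would then verify that this morphism of diagrams commutes with every horizontal arrow. Multiplication by $\alpha$ commutes with the inclusions since $\alpha\in E\subseteq E'$; the projections into the GIT quotients are evidently compatible; the Kostant--Weierstrass/Spaltenstein--Taylor isomorphism~\eqref{eq:dualChnew} is natural in this sense, as both incarnations are determined by the same $\bar{k}$-algebraic datum $\bar{\fa}^*\subset\fg^*(F^u)_{hx=0}$ with Weyl action $W^E_{hx}=W^{E'}_{hx}$. The map $\Int(h)$ on the top row depends only on the $F$-structure and the final projection $\bar{\fa}^*\quo W^E_{hx}\twoheadrightarrow\bar{\fa}^*\quo W$ is common to both. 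Chasing the diagram, both compositions defining $i_{E,\alpha,x}$ and $i_{E',\alpha,x}$ agree as maps $\fg^*(F)_{x=-r}\to\bar{\fa}^*\quo W$, proving the lemma.

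The main obstacle I anticipate is the bookkeeping required to confirm the identifications in (ii) and (iii) are genuinely compatible with~\eqref{eq:dualChnew}: one must check that the inclusion $\fa^*(E^u)_{=0}\hookrightarrow\fa^*((E')^u)_{=0}$ is the identity under the common identification with $\bar{\fa}^*$, and that the induced square of $\bar{k}$-GIT quotients commutes. These are standard consequences of $G$ being split over $E$, ensuring that Moy--Prasad filtrations base-change compatibly across $E^u\subseteq(E')^u$, but they require careful tracking of the identifications between the two instances of the diagram.
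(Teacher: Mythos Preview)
Your reduction to the nested case via the compositum is correct and is in fact cleaner than the paper's route: the paper first treats the special case where $E'/E$ is tame, then in general passes \emph{down} to $K=E\cap E'$ (after a preliminary tame enlargement of $E'$ to force $G$ to split over $K$), whereas you pass \emph{up} to $EE'$ in one step. Both reductions work because Section~\ref{sec:duallattice} gives the needed inclusions of Moy--Prasad quotients whenever $G$ is split over the smaller field, which is guaranteed by the ``$(x,X)$-adapted'' hypothesis.

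There is, however, a genuine error in your claim (iii). The reductive quotients $G(E^u)_{hx=0}$ and $G((E')^u)_{hx=0}$ need \emph{not} coincide, and in general $W^E_{hx}\subsetneq W^{E'}_{hx}$. The point $hx$ lies in a facet of $\cB(G,E^u)$ determined by which affine roots vanish at $hx$; passing to a ramified extension $E'$ introduces new affine hyperplanes, so $hx$ may lie in a strictly smaller facet over $(E')^u$. Concretely, take $G=\SL_2$ split over $E=F$, let $E'/F$ be a ramified quadratic extension, and let $hx$ be the midpoint of an alcove in $\cA$: then $G(E^u)_{hx=0}$ is a torus with $W^E_{hx}=1$, while $hx$ is a hyperspecial vertex over $(E')^u$ and $G((E')^u)_{hx=0}\cong\SL_2(\bar k)$ with $W^{E'}_{hx}=\Z/2$. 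Your appeal to ``the same split $G$ over the common residue field $\bar k$'' ignores that the reductive quotient also depends on the facet.

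The fix is straightforward and is exactly what the paper does: replace the claimed identifications by the inclusion $G(E^u)_{hx=0}\hookrightarrow G((E')^u)_{hx=0}$ and the resulting containment $W^E_{hx}\subset W^{E'}_{hx}\subset W$. One then has a commuting square relating the two instances of~\eqref{eq:dualChnew} to the surjection $\bar\fa^*\quo W^E_{hx}\twoheadrightarrow\bar\fa^*\quo W^{E'}_{hx}$, and since both of these map compatibly to $\bar\fa^*\quo W$, the two composites $i_{E,\alpha,x}$ and $i_{E',\alpha,x}$ agree. With this adjustment your argument goes through.
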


\begin{remark}  The element $\alpha$ is not an arbitrary element in $\bar{F}$ such that $\val(\alpha) = r$. It is required to live in a finite Galois extension $K$ such that $(\alpha, K)$ is $(x,X)$-adapted.
\end{remark}

\begin{proof}
Suppose first that $E'$ is a tamely ramified extension of $E$.  Since $E'/E$ is tamely ramified, it follows that $(\alpha,E')$ is  $(x,X)$-adapted.    Suppose $\bar{Y} \in \fg^*(F)_{x=-r}$.
It will be enough to show $i_{E,\alpha,x}(\bar{Y}) = i_{E',\alpha,x}(\bar{Y})$.
As $\bar{k}$-varieties we have identifications   
$ \fg^*(E^{u})_{hx=-r} \subset \fg^*(E^{\prime u})_{hx=-r}$,  
$\fg^*(E^{u})_{hx=0} \subset \fg^*(E^{\prime u})_{hx=0}$ , and
$G(E^{u})_{hx=0} \leq G(E^{\prime u})_{hx=0} $.
Thus, the $G(E^{ u})_{hx=0}$-orbit of the image of $\bar{Y}$ in $\fg^*(E^{ u})_{hx=0}$ is contained in the $G(E^{\prime u})_{hx=0}$-orbit of the image of $\bar{Y}$ in $\fg^*(E^{\prime u})_{hx=0}$. Since these identifications are compatible with the surjections  $\bar{\fa}^*\quo W^E_{hx} \sra \bar{\fa}^*\quo W^{E^\prime}_{hx} \sra \bar{\fa}^*\quo W$, it follows that  $i_{E,\alpha,x}(\bar{Y}) = i_{E',\alpha,x}(\bar{Y})$.

 Suppose $E'/F$ is a finite separable extension such that $(\alpha,E')$ is $(x,X)$-adapted.  Without loss of generality, we replace $E'$ with the compositum of $E'$ and the maximal tame  subextension of $E$. 
Let $K = E \cap E'$.  Note that $A$ splits over $K$.  We have $\alpha \in E \cap E'$ and since $\val(\alpha) = r$, we conclude that $r \cdot e(K/F) \in \Z$.  Since $K$ is Galois over $F$ and we have surjections $\Group(E/F)^s \sra \Group(K/F)^s$ and $\Group(E'/F)^s \sra \Group(K/F)^s$ for all $s \in \R_{\geq 0}$, we conclude that $u(K/F) \leq \min(u(E/F), u(E'/F)) < r$.   Hence, $(\alpha,K)$ is  $(x,X)$-adapted.  

Suppose $\bar{Y} \in \fg^*(F)_{x=-r}$.
Without loss of generality it will be enough to show $i_{E,\alpha,x}(\bar{Y}) = i_{K,\alpha,x}(\bar{Y})$.
As $\bar{k}$-varieties we have identifications   
$\fg^*(K^{ u})_{hx=-r} \subset \fg^*(E^{u})_{hx=-r}$  and
$\fg^*(K^{ u})_{hx=0} \subset \fg^*(E^{u})_{hx=0}$.
Thus, the $G(K^u)_{hx=0}$-orbit of the image of $\bar{Y}$ in $\fg^*(K^{ u})_{hx=0}$ is contained in the $G(E^u)_{hx=0}$-orbit of the image of $\bar{Y}$ in $\fg^*(E^{ u})_{hx=0}$.  Since these identifications are compatible with the surjections  $\bar{\fa}^*\quo W^K_{hx} \sra \bar{\fa}^*\quo W^E_{hx} \sra \bar{\fa}^*\quo W$, we conclude that $i_{E,\alpha,x}(\bar{Y}) = i_{K,\alpha,x}(\bar{Y})$.
\end{proof}

\begin{remark}
    Suppose $(\alpha, E)$ is $(x,X)$-adapted. 
    If $L$ is any tame
    extension of $F$, then we can construct as above a map $i^L_{\alpha,x} \colon \fg^*(L)_{x=-r} \ra \bar{\fa}^* \quo W$ with $L$ in place of $F$  (so we replace $E$ with $EL$).  We have $i_{\alpha,x} = i^F_{\alpha,x} = i^L_{\alpha,x} \circ \iota_L$ where $\iota_L \colon \fg^*(F)_{x = -r} \ira \fg^*(L)_{x=-r}$ is the  injection discussed in Section~\ref{sec:duallattice}.   Since $\fg^*(F)_{x=-r} \ira \fg^*(E')_{x=-r} \ira \fg^*(E)_{x=-r}$ where $E'$ is the maximal tame subextension in $E/F$, we also have  $i_{\alpha,x} =  i^E_{\alpha,x} \circ \iota_E$ where $\iota_E \colon \fg^*(F)_{x = -r} \ira \fg^*(E)_{x=-r}$ and $i^E_{\alpha,x}  \colon \fg^*(E)_{x=-r} \ra \bar{\fa}^* \quo W$ is the map constructed above with $F$ replaced by $E$.
\end{remark}

\begin{definition}\label{def:DL}  
For any $c\in \bar{k}$
we let $j_c:\bar{\fa}^*\quo W\xra{\sim}\bar{\fa}^*\quo W$ denote the isomorphism induced by $\times c:\bar{\fa}^*\xra{\sim}\bar{\fa}^*$.   
    \begin{enumerate}[label=(\roman*)]
        \item The set of {\bf depth-$r$ Deligne-Lusztig parameters} is
  \[\DLr:=\{(\beta,Z)\;|\;  \beta \in 
  \bar{F},
  \;\val(\beta)=r,\;Z\in(\bar{\fa}^*\quo W)(\bar{k})\}/\!\equiv\]
  where $(\beta,Z)\equiv(\beta',Z')$ provided that $Z'=j_c(Z)$ for  $c = \beta'/\beta +
  \fm_{\bar{F}}$.   An element $\iota$ of  $\DLr$ will be called \emph{trivial} provided that the second component of some, hence any, $(\beta,Z) \in \iota$ is trivial.
	\item The {\bf depth-$r$ Deligne-Lusztig parameter} of  $(x,X) \in \MP_r$ is defined to be  the equivalence class of $(\alpha,i_{\alpha,x}(X))$ in $\DLr$.  Here  $\alpha$ is any element of 
   $\bar{F}$ for which there exists a finite Galois $E/F$ such that $\alpha \in E$ and $(\alpha, E)$ is $(x,X)$-adapted.  This equivalence class will be denoted by  $\iota_x(X)$.  
	\item We say that two Moy-Prasad types of depth $r\,$
  are {\bf stable associates} provided that  they have the same depth-$r$ Deligne-Lusztig parameter.  That is, $(x,X), (x',X') \in \MP_r$ are stable associates if and only if $\iota_x(X) = \iota_{x'}(X')$.
    \end{enumerate}
\end{definition}

\begin{remark}
Definition \ref{def:DL} is a 
rational-depth generalization of the definition and construction in \cite[\S5.3 and \S5.4]{CB24}.    
\end{remark}

\begin{remark}
    By construction, we have that if $\iota_x(X)$ is nontrivial, then $(x,X)$ is nondegenerate.
\end{remark}

\subsection{Deligne-Lusztig parameters and  Moy-Prasad types} 

In this section we establish some connections between depth-$r$ Deligne-Lusztig parameters and Moy-Prasad types for $G(F)$ of depth $r$.

\subsubsection{Nondegeneracy and Deligne-Lusztig parameters in the tame case}  \label{subsub:whereNis}

Throughout Section~\ref{subsub:whereNis} we assume that our Moy-Prasad type has tame depth, that is, $r \in \Z_{(p)}$.

\begin{remark}\label{rmk:Jessica} If $p$ does not divide the order of the absolute Weyl group of $G$, then every nondegenerate Moy-Prasad type of depth $r$ has $r \in \Z_{(p)}$.    Indeed, if $(x,X)$ is a nondegenerate Moy-Prasad type of depth $r$, then from Frobenius reciprocity we know that $(x,X)$ occurs as a (nondegenerate) Moy-Prasad type for every irreducible quotient $\pi$ of $\ind_{G(F)_{x\geq r}}^{G(F)} \rho_X$.   Fix such a $\pi$.  When $p$ doesn't divide the order of the absolute Weyl group, from
\cite[Theorem~6.1]{Fin21} applied to the representation $\pi$ there exists a maximal tame $F$-torus $T$ such that $\ft^*(F)_{=-r} \neq \{0\}$. This implies $r\in\Z_{(p)}$.
\end{remark}

We begin this subsection by showing that a tame Moy-Prasad type  of depth $r$ is nondegenerate if and only if  its depth-$r$ Deligne-Lusztig parameter is nontrivial.  
To do this we use some results from~\cite[Section~3]{AD02}, which are stated for $\fg(F)$.  However, as noted in the introduction to Section~3 of {\it loc. cit.},  none of the results there rely on the structure of $\fg$  as a Lie algebra, and they are therefore valid for the Moy-Prasad filtration lattices of $\fg^*(F)$.

For  $s \in \R$ we define the $G$-domains $\fg^*(E)_{\ge s}$ and $\fg^*(E)_{> s}$ in $\fg^*(E)$ by
\[ \fg^*(E)_{\geq s}  = \bigcup_{y \in \cB(G,E)} \fg^*(E)_{y \geq s} \text{ \, and \,  } \fg^*(E)_{> s}  = \bigcup_{y \in \cB(G,E)} \fg^*(E)_{y > s}.\]  
Note that if  $E'/E$ is a tame
extension, then \(  \fg^*(E)_{\geq s} \subset \fg^*(E')_{\geq s}\)  and, similarly, \(  \fg^*(E)_{>s} \subset \fg^*(E')_{>s}\).
We know~\cite[Theorem~3.1.2]{AD02} that 
\[ \fg^*(E)_{\ge s}  = \bigcap_{z \in \cB(G,E)} \fg^*(E)_{z \geq s} + \NN_E^*  \text{ \, and \,  }  \fg^*(E)_{> s}  = \bigcap_{z \in \cB(G,E)} \fg^*(E)_{z > s} + \NN_E^*  \]
where $\NN^*_E$ is the set of nilpotent elements in $\fg^*(E)$; i.e., those $Y \in \fg^*(E)$ for which there exists $\lambda \in X_*^E(G)$ such that $\lim_{t \rightarrow 0} \Ad^*(\lambda(t)) Y = 0$.  We set $\NN^* = \NN^*_F$.

\begin{lemma}\label{lem:nonzeronewold} 
For this lemma and only this lemma we remove the assumption that $G$ splits over $F^t$.  
Suppose $(x,X) \in \MP_r$, and let $X_0 \in \fg^*(F)_{x \geq -r}$ be a lift of $X$.  The following statements are equivalent.
\begin{enumerate}
\item \label{item:a} $(x,X)$ is nondegenerate.
\item \label{item:b} The orbit of $X$ in $\fg^*(F^u)_{x=-r}$ under $G(F^u)_{x=0}$  does not contain zero in its closure.  Here we are thinking of  $\fg^*(F^u)_{x=-r}$ as a $k$-variety and $G(F^u)_{x=0}$ 
as an algebraic $k$-group.
\item \label{item:c} We have $X_0 \in \fg^*(F)_{\geq -r} \setminus \fg^*(F)_{>-r}$.
\item \label{item:c2} For any tame extension $L/F$ we have $X_0 \in \fg^*(L)_{\geq-r} \setminus \fg^*(L)_{>-r}$.
\end{enumerate}   
\end{lemma}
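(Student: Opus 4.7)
The proof proceeds by establishing the identity
\[
\fg^*(E)_{>-r} \;=\; \fg^*(E)_{x>-r} + \NN^*_E
\]
for $E \in \{F, L, F^u\}$, from which all four equivalences follow. The inclusion $\subset$ comes from specializing the second AD02 characterization to $z=x$. For $\supset$ one needs $\NN^*_E \subset \fg^*(E)_{>s}$ for every $s \in \R$: indeed, if $\lambda \in X_*^E(G)$ satisfies $\lim_{t\to 0}\Ad^*(\lambda(t))N = 0$, then $\Ad^*(\lambda(t))N$ lies in $\fg^*(E)_{z>s}$ for any fixed $z$ and small $t$, so $N \in \fg^*(E)_{>s}$ by $G$-invariance of this $G$-domain.

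With this identity, (\ref{item:a})$\Leftrightarrow$(\ref{item:c}) is formal. The condition $X_0 \in \fg^*(F)_{\geq -r}$ is immediate from $X_0 \in \fg^*(F)_{x\geq -r}$, and nondegeneracy of $(x,X)$ unwinds to $X_0 \notin \NN^* + \fg^*(F)_{x>-r}$, which by the identity is exactly $X_0 \notin \fg^*(F)_{>-r}$.

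For (\ref{item:c})$\Leftrightarrow$(\ref{item:c2}), the direction $\Leftarrow$ is trivial at $L=F$. For $\Rightarrow$, the inclusion $\fg^*(F)_{\geq -r} \subset \fg^*(L)_{\geq -r}$ is already noted in the excerpt, so the content is $X_0 \notin \fg^*(L)_{>-r}$. By the identity over $L$ together with (\ref{item:a})$\Leftrightarrow$(\ref{item:c}) applied to $L$, this says that nondegeneracy persists under tame base change. I would prove this by Galois descent: reducing to finite Galois $L/F$, and supposing $X_0 = U + N$ with $U \in \fg^*(L)_{x>-r}$, $N \in \NN^*_L$, the $\Gal(L/F)$-invariance of $X_0$ combined with the compatibility $\fg^*(F)_{x>-r} = \fg^*(F) \cap \fg^*(L)_{x>-r}$ from \cite[Lemma~2.3.5(3)]{Spi21}, together with $\NN^*_L \cap \fg^*(F) = \NN^*$, extracts a decomposition over $F$, contradicting (\ref{item:c}).

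Finally, (\ref{item:b})$\Leftrightarrow$(\ref{item:c2}) at $L = F^u$ is the Hilbert--Mumford link. By (\ref{item:a})$\Leftrightarrow$(\ref{item:c}) applied over $F^u$, the statement at $L=F^u$ says that no lift of $X$ can be written $U + N$ with $U \in \fg^*(F^u)_{x>-r}$, $N \in \NN^*_{F^u}$. On the other hand, $0$ lies in the closure of the $G(F^u)_{x=0}$-orbit of $X$ in $\fg^*(F^u)_{x=-r}$ iff, by Hilbert--Mumford over $\bar k$, there is a cocharacter $\bar\lambda\colon\Gm \to G(F^u)_{x=0}$ with $\lim_{t \to 0}\Ad^*(\bar\lambda(t))X = 0$. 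Lifting $\bar\lambda$ to a cocharacter of $G_{F^u}$ stabilizing $x$ and tracking weight spaces of $\fg^*$ along it shows that this limit condition is equivalent to the existence of a decomposition $X_0 = U + N$ of the above shape. Combined with (\ref{item:c})$\Leftrightarrow$(\ref{item:c2}), this yields (\ref{item:b})$\Leftrightarrow$(\ref{item:c}). The main obstacle is the Galois descent step and the Hilbert--Mumford lifting: in both cases one must compare the $\bar k$-algebraic structure on the Moy-Prasad subquotients with the integral-filtration structure on the lattices, and in particular lift a cocharacter of the reductive quotient to a cocharacter of $G_{F^u}$ stabilizing $x$ in a way compatible with the Moy-Prasad weight decomposition.
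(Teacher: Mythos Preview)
Your framework---the identity $\fg^*(E)_{>-r} = \fg^*(E)_{x>-r} + \NN^*_E$---is correct and is essentially how the paper organizes (\ref{item:a})$\Leftrightarrow$(\ref{item:c}). But two steps in your argument are not justified as written.

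First, your proof of the inclusion $\supset$ is incomplete. You argue that $\NN^*_E \subset \fg^*(E)_{>s}$ for all $s$, which is fine, but this does \emph{not} yield $\fg^*(E)_{x>-r} + \NN^*_E \subset \fg^*(E)_{>-r}$: the $G$-domain $\fg^*(E)_{>-r} = \bigcup_y \fg^*(E)_{y>-r}$ is not closed under addition, so knowing that each summand lies in it tells you nothing about their sum. This direction is exactly the content of \cite[Cor.~3.2.6]{AD02}, which the paper invokes; its proof uses a geometric argument in the building (moving the point $x$ along a facet adjacency) that is not captured by your one-line reduction.

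Second, and more seriously, your Galois descent argument for (\ref{item:c})$\Rightarrow$(\ref{item:c2}) has a real gap. From $X_0 = U + N$ with $U \in \fg^*(L)_{x>-r}$ and $N \in \NN^*_L$, Galois-invariance of $X_0$ does not force $U$ and $N$ to be individually Galois-invariant, and averaging over $\Gal(L/F)$ destroys nilpotence: the nilpotent cone is not a linear subspace, so $\frac{1}{[L:F]}\sum_\tau \tau(N)$ need not lie in $\NN^*_L$. The paper avoids this entirely by proving the clean intersection statement $\fg^*(L)_{\geq s} \cap \fg^*(F) = \fg^*(F)_{\geq s}$ directly on the building: if $X_0 \in \fg^*(L')_{y \geq s}$ for some $y \in \cB(G,L')$ (with $L'/F$ tame Galois), then $X_0 \in \fg^*(L')_{\tau(y) \geq s}$ for every $\tau$, hence $X_0 \in \fg^*(L')_{z \geq s}$ for every $z$ in the convex hull of the Galois orbit of $y$; the Bruhat-Tits fixed point theorem then produces a $\Gal(L'/F)$-fixed $z$ in that hull, and tame descent puts $z \in \cB(G,F)$. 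This convexity-plus-fixed-point argument is the missing idea.

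For (\ref{item:a})$\Leftrightarrow$(\ref{item:b}) the paper simply cites \cite[Prop.~13.7.4]{KP23} or \cite[Prop.~6.4]{MP94}; your Hilbert--Mumford sketch is the right shape of what those references do, and you correctly flag the cocharacter-lifting step as the delicate point.
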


\begin{remark}  The analogue of the above result for Lie algebras is also true.  To see this, replace all occurrences of $\fg$ with $\fg^*$ in both its statement and proof.  
\end{remark}
  
\begin{proof}  Suppose  $(x,X) \in \MP_r$.   Let ${X}_0 \in \fg^*(F)_{x\geq -r}$ be a lift of $X$.

(\ref{item:a})$\Leftrightarrow$(\ref{item:b}):
This is~\cite[Prop. 13.7.4]{KP23} or \cite[Prop. 6.4]{MP94}.

(\ref{item:a})$\Rightarrow$(\ref{item:c}):
Since ${X}_0 \in \fg^*(F)_{x\geq -r} \subset \fg^*(F)_{\geq-r}$, we only need to show that ${X}_0 \not \in  \fg^*(F)_{>-r}$.   If ${X}_0 \in  \fg^*(F)_{>-r}$, then from~\cite[Theorem~3.1.2]{AD02} we have
\[X_0 \in \bigcap_{z \in \cB(G,E)} \fg^*(E)_{z > -r} + \NN_E^* \, \,  \subset \, \, \fg^*(E)_{x > -r} + \NN_E^*.\]  This contradicts our assumption that $(x,X)$ is nondegenerate.

(\ref{item:c})$\Rightarrow$(\ref{item:a}):  We will show the contrapositive.  If $(x,X)$ is degenerate, then
from~\cite[Corollary~3.2.6]{AD02} we have $X_0 \in \fg^*(F)_{>-r}$.

 (\ref{item:c})$\Leftarrow$(\ref{item:c2}):  This is immediate by taking $L$ to be $F$.

 (\ref{item:c})$\Rightarrow$(\ref{item:c2}):   This follows from ~\cite[Lemma~2.2.5]{AD04}; we provide a proof for the reader's convenience.
 
 It will be enough to show that for all tame extensions $L/F$ and all $s \in \R$ we have 
\[\fg^*(L)_{\geq s} \cap \fg^*(F) = \fg^*(F)_{\geq s}.\]

Fix $s \in \R$ and let $L/F$ be a tame extension.  Since $\fg^*(F)_{\ge s} \subset \fg^*(L)_{\ge s}$, it will be enough to show
\(\fg^*(L)_{\geq s} \cap \fg^*(F) \subset \fg^*(F)_{\geq s}\).
Let $L'/F$ be a tame Galois extension that contains $L$.  Since
$\fg^*(L)_{\geq s} \subset \fg^*(L')_{\geq s}$, it will be enough to show that 
\(\fg^*(L')_{\geq s} \cap \fg^*(F) \subset \fg^*(F)_{\geq s}\).

Suppose $Y \in \fg^*(L')_{\geq s} \cap \fg^*(F)$.  By definition, there exists $y \in \cB(G,L')$ such that $Y \in \fg^*(L')_{y\geq s}$.   Note that $Y \in  \fg^*(L')_{\tau (y) \geq s}$ for all $\tau \in \Gal(L'/F)$.  Thus, $Y \in  \fg^*(L')_{z \geq s}$ for all $z \in D$ where $D$ is the convex hull of $\{\tau(y) \colon \tau \in  \Gal(L'/F) \}$.  By the Bruhat-Tits fixed point theorem~\cite[Lemma~3.2.3]{BT72} or~\cite[Theorem~1.1.15(2)]{KP23}, there is a $z \in D$ that is $\Gal(L'/F)$-fixed.  Since $L'/F$ is tame, we have $z \in \cB(G,F)$.  Thus $Y \in \fg^*(L')_{z \geq s}^{\Gal(L'/F)} = \fg^*(F)_{z \geq s} \subset \fg^*(F)_{\geq s}$.
\end{proof}

\begin{lemma}\label{lem:nonzeronewnew} 
If $(x,X) \in \MP_r$ with $r \in \Z_{(p)}$, then
$(x,X)$ is nondegenerate if and only if
 $\iota_x(X)$ is nontrivial.
\end{lemma}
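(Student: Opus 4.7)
The plan is to build a chain of equivalences between triviality of $\iota_x(X)$ and degeneracy of $(x,X)$, with Lemma~\ref{lem:nonzeronewold} doing the heavy lifting of transporting conditions between the $p$-adic side and the reductive-quotient side (and between the base field $F$ and a suitable tame extension).

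Since $r\in\Z_{(p)}$, Lemma~\ref{lem:Eexists} furnishes a finite tame Galois extension $E/F$ that splits $G$ and satisfies $r\cdot e(E/F)\in\Z$; pick $\alpha\in E$ with $\val(\alpha)=r$ and $h\in G(F)$ with $hx\in\cA$. Replacing $(x,X)$ by the $G(F)$-conjugate $(hx,\Ad^*(h)X)$---which changes neither the degeneracy status nor $\iota_x(X)$---we may and do assume $h=1$ and $x\in\cA$. Let $X^E$ denote the image of a chosen lift $X_0\in\fg^*(F)_{x\ge-r}$ under the canonical inclusion $\fg^*(F)_{x=-r}\hookrightarrow\fg^*(E^u)_{x=-r}$.

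The chain of equivalences then runs as follows. First, $\iota_x(X)$ is trivial iff $i_{\alpha,x}(X)=0$ in $\bar{\fa}^*\quo W$; since $W$ is finite and fixes $0\in\bar{\fa}^*$, the only element of $\bar{\fa}^*$ mapping to $0\in\bar{\fa}^*\quo W$ is $0$ itself, so via the bijection~\eqref{eq:dualChnew} this is equivalent to the image of $\alpha X^E$ in $\fg^*(E^u)_{x=0}\quo G(E^u)_{x=0}$ being $0$. By the usual orbit-closure description of the zero fibre of a categorical quotient for a reductive group action over the algebraically closed residue field $\bar{k}$, this is equivalent to $0$ lying in the $G(E^u)_{x=0}$-orbit closure of $\alpha X^E$ in $\fg^*(E^u)_{x=0}$. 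Since multiplication by $\alpha$ is a $G(E^u)_{x=0}$-equivariant isomorphism $\fg^*(E^u)_{x=-r}\xrightarrow{\sim}\fg^*(E^u)_{x=0}$, the $\alpha$ is inessential and the condition reads: $0$ lies in the $G(E^u)_{x=0}$-orbit closure of $X^E$ in $\fg^*(E^u)_{x=-r}$. By Lemma~\ref{lem:nonzeronewold}~(\ref{item:a})$\Leftrightarrow$(\ref{item:b}) applied with ground field $E^u$, this means $(x,X^E)$ is degenerate as a Moy--Prasad type over $E^u$; equivalently by (\ref{item:a})$\Leftrightarrow$(\ref{item:c}) over $E^u$, $X_0\in\fg^*(E^u)_{>-r}$; and by the tame-descent equivalence (\ref{item:c})$\Leftrightarrow$(\ref{item:c2}) over $F$ with $L=E^u$, $X_0\in\fg^*(F)_{>-r}$, i.e., $(x,X)$ is degenerate over $F$.

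The hardest step, and the one I expect to require the most care, is invoking Lemma~\ref{lem:nonzeronewold} with the Henselian (but not complete) ground field $E^u$ and with the infinite tame extension $L=E^u$. Both uses should follow by descending to a sufficiently large finite tame subextension of $E^u$ that supports any relevant nilpotent witness, but the arguments of Section~\ref{subsub:whereNis} are phrased for local fields and need to be examined carefully. Every other link in the chain is a routine invocation of structure already in place in the paper.
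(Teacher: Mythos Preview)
Your argument is correct and follows essentially the same strategy as the paper's proof, but you have introduced an avoidable technical detour that generates exactly the concern you flag at the end. The paper sidesteps the issue of invoking Lemma~\ref{lem:nonzeronewold} over the non-complete field $E^u$ (and with the infinite extension $L=E^u$) by working over the finite tame extension $E$ throughout. Concretely: first use (\ref{item:c})$\Leftrightarrow$(\ref{item:c2}) over $F$ with the \emph{finite} tame extension $L=E$ to see that $(x,X)$ is nondegenerate iff $X_0\in\fg^*(E)_{\ge-r}\setminus\fg^*(E)_{>-r}$; since $\alpha\in E$, this is equivalent to $\alpha X_0\in\fg^*(E)_{\ge 0}\setminus\fg^*(E)_{>0}$; now apply Lemma~\ref{lem:nonzeronewold} with $E$ (a genuine local field) playing the role of $F$ and with $r=0$, so that condition~(\ref{item:b}) is automatically the orbit-closure statement in $\fg^*(E^u)_{x=0}$ under $G(E^u)_{x=0}$, because $E^u$ is exactly the $F^u$ of the lemma for base field $E$. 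This delivers the same chain of equivalences as yours without ever needing the lemma over a Henselian-but-incomplete field or for an infinite tame extension. Your descent-to-a-large-finite-subextension fix would also work, but it is unnecessary once you route through $E$ rather than $E^u$.
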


\begin{proof}  Suppose  $(x,X) \in \MP_r$ with $r \in \Z_{(p)}$.  Since both $\iota_x(X)$ and the notion of nondegeneracy are invariant under the action of $G(F)$, we may   assume that $x \in \cA$.   

Choose $\alpha \in F^t$ such that $\val(\alpha) = r$.  Let $E/F$ be a tame extension such that $\alpha \in E^\times$, $r \cdot e(E/F) \in \Z$,  and $G$ is $E$-split.  Since $u(E/F) = 0$, the pair $(\alpha,E)$ is $(x,X)$-adapted.   Let ${X}_0 \in \fg^*(F)_{x\geq -r}$ be a lift of $X$. 

From Lemma~\ref{lem:nonzeronewold} we know that $(x,X)$ is nondegenerate if and only if $X_0 \in \fg^*(E)_{\geq -r} \setminus \fg^*(E)_{> -r}$, which is equivalent to saying 
$\alpha X_0 \in \fg^*(E)_{\geq 0} \setminus \fg^*(E)_{> 0}$.
Using Lemma~\ref{lem:nonzeronewold} with $E$ playing the role of $F$ and $r$ being $0$, this is equivalent to saying
that  zero does not belong to the closure of the orbit $\Ad^*(G(E^u)_{x=0})(\overline{\alpha {X}_0})$ in $\fg^*(E^u)_{x=0}$. That is, it is equivalent to saying that $\iota_x(X)$ is nontrivial.
\end{proof}

\begin{remark}\label{rmk:Levy} Suppose $F=\bar{\F}_p((t))$ and let $E=\bar{\F}_p((t^{1/m}))$ where $m$ is a positive integer coprime to $p$. So $E/F$ is tame.  Denote by $\sigma\in\Gal(E/F)$ the element that sends $t^{1/m}$ to $\zeta_mt^{1/m}$ where $\zeta_m\in\bar{F}_p$ is a primitive $m$-th root of unity.  Let $H$ be a connected reductive group over $\bar{\F}_p$ with Lie algebra $\fh$.   Let $\theta$ be an order $m$ automorphism of $H$. We can base change $H$ to a connected reductive group $H_E$ over $E$.  Consider a descent datum of $H_E$ from $E$ to $F$ for which the twisted action of the inertia subgroup on $H_E(E)$ is given by $\sigma *h(t^{1/m})=\theta h(\zeta_mt^{1/m})$.  This gives a connected reductive group $\cH$ over $F$ with a distinguished vertex $x$ such that $\cH(F)_{x=0}=(H^{\theta})^o$ and $\cH(F)_{x=1/m}\cong\Lie\cH(F)_{x=1/m}=\fh^{\theta=\zeta_m}$. The analogue of Lemma \ref{lem:nonzeronewold} (but for the Lie algebra instead of the dual Lie algebra) then implies that an element $X\in\fh^{\theta=\zeta_m}$ is nilpotent in $\fh$ if and only if its $\Ad(H^{\theta})$-orbit contains $0$ in its closure. This gives another proof of \cite[Lemma 0.4, 2.11]{Lev09}.
\end{remark}

\begin{remark} If we assume that $x \in \cA$ is a hyperspecial vertex in $\cB(G,E)$ for some finite tame $E/F$, then we may identify $W$ with $W_x = N_{G(E^u)}(\bar{A})/\bar{A}$.   An appropriate variant of~\cite[Thm. 4.1]{RY14}, as in Remark~\ref{rmk:Levy}, shows that $\fg^*(F^u)_{x=-r}$ is an eigenspace in $\fg^*(E^u)_{x=0}$ under some finite cyclic group scheme action. This can be useful in computing $\iota_x(X)$ explicitly when $x$ is hyperspecial in $\cB(G,E)$ for $E/F$ tame.
\end{remark}

\subsubsection{Associativity and Deligne-Lusztig parameters}  
We no longer assume that $r \in \Z_{(p)}$. 
We now show that if two Moy-Prasad types are associate, then  the associated depth-$r$ Deligne-Lusztig parameters are stably associate.  

Recall that two positive-depth Moy-Prasad types $(x,X)$ and $(y,Y)$ of depth $r$ for $G(F)$ are {\bf associate} \cite[\S5.1]{MP94} provided that 
\begin{equation}
\Ad^*(G(F))(X_0+\fg^*(F)_{x>-r})\cap\Ad^*(G(F))(Y_0+\fg^*(F)_{y>-r})\not=\emptyset.   
\end{equation}
Here $X_0 \in \fg^*(F)_{x\geq-r}$ and $Y_0\in \fg^*(F)_{y\geq-r}$ are  lifts of $X$ and $Y$, respectively.
We now show

\begin{lemma}\label{lem:main1new} If two Moy-Prasad types for $G(F)$ of depth $r$ are associates, then they are stable associates.
\end{lemma}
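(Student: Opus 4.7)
The plan is to reduce the lemma to an intrinsic characterization of $\iota_x$: for any $Z\in\fg^*(F)$ and any $x\in\cB(G,F)$ with $Z\in\fg^*(F)_{x\ge-r}$, the element $\iota_x(\bar Z_x)\in(\bar\fa^*\quo W)(\bar k)$ depends only on $Z$, not on the auxiliary choice of $x$.

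Granting this intrinsic characterization, the lemma follows quickly. If $(x,X)$ and $(y,Y)$ are associate, pick $g_1,g_2\in G(F)$ and lifts $X_0'\in X_0+\fg^*(F)_{x>-r}$, $Y_0'\in Y_0+\fg^*(F)_{y>-r}$ with $Z:=\Ad^*(g_1)X_0'=\Ad^*(g_2)Y_0'$. Since $\iota_x$ factors through the quotient $\fg^*(F)_{x\ge-r}\sra\fg^*(F)_{x=-r}$, one has $\iota_x(X)=\iota_x(\bar X_0')$. The $G(F)$-equivariance $\iota_{gx}(\overline{\Ad^*(g)Y})=\iota_x(\bar Y)$ (proved exactly as in Lemma~\ref{lem:indofh} by setting $h':=hg^{-1}$ so that $h'(gx)=hx\in\cA$) then yields $\iota_x(X)=\iota_{g_1x}(\bar Z_{g_1x})$ and, symmetrically, $\iota_y(Y)=\iota_{g_2y}(\bar Z_{g_2y})$; the intrinsic characterization forces these to be equal.

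To prove the intrinsic characterization, first use $G(F)$-equivariance to reduce to two points $x_1,x_2\in\cA$ with $Z\in\fg^*(F)_{x_1\ge-r}\cap\fg^*(F)_{x_2\ge-r}$. Lemma~\ref{lem:Eexists} lets us pick a single $(\alpha,E)$ simultaneously adapted to both $(x_i,\bar Z_{x_i})$, after which $\alpha Z\in\fg^*(E^u)_{x_1\ge 0}\cap\fg^*(E^u)_{x_2\ge 0}$. Bruhat--Tits convexity of the Moy--Prasad lattices over $E^u$ shows that this $\ge 0$ condition persists along the whole segment $[x_1,x_2]\subset\cA\subset\cB(G,E^u)$. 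A standard gallery argument reduces the comparison of $i_{E,\alpha,x_1}(\bar Z_{x_1})$ and $i_{E,\alpha,x_2}(\bar Z_{x_2})$ to the case of two points $z,z'$ with $z\in\bar\sigma$, where $\sigma$ is the facet containing $z'$; for such a pair, choosing any point $w$ in the interior of the alcove whose closure contains both $z,z'$ gives $\fg^*(E^u)_{w\ge 0}\supset\fg^*(E^u)_{z\ge 0},\fg^*(E^u)_{z'\ge 0}$, and the Chevalley isomorphism~\eqref{eq:dualChnew} at $w$ simultaneously computes the images of $\alpha Z$ in both $\bar\fa^*\quo W^E_z$ and $\bar\fa^*\quo W^E_{z'}$. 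Composing with the canonical surjections $\bar\fa^*\quo W^E_z,\;\bar\fa^*\quo W^E_{z'}\sra\bar\fa^*\quo W$ yields a common element of $\bar\fa^*\quo W$.

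The main obstacle is the rational-depth case $r\notin\Z_{(p)}$, where $E/F$ cannot be chosen tame and the Moy--Prasad compatibilities between $F$, $E$, and $E^u$ from Section~\ref{sec:duallattice} must be handled carefully. Fortunately, the construction in diagram~\eqref{eq:squaresnew} passes through $\cB(G,E^u)$ before any invariant-theory step, so the Bruhat--Tits convexity and the Chevalley restriction isomorphism~\eqref{eq:dualChnew} used above are applied purely at the level of $\bar k$-varieties, where they hold uniformly. The residual bookkeeping---that $\cA\subset\cB(G,E^u)$ carries the required Moy--Prasad structure inherited from $F$---is exactly what the base-change identities of Section~\ref{sec:duallattice} provide.
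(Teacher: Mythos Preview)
Your overall strategy---reduce to an ``intrinsic characterization'' of $\iota_x(\bar Z_x)$ depending only on $Z$, then walk along a gallery in the apartment---is the same skeleton the paper uses. But the step where you compare two nearby points $z,z'$ via an alcove interior $w$ contains a concrete error and omits the key ingredient.

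The containment you assert, $\fg^*(E^u)_{w\ge 0}\supset\fg^*(E^u)_{z\ge 0}$ for $z\in\bar C$ and $w\in C$, is backwards. From the definitions in Section~\ref{sec:duallattice}, $\fg^*(E^u)_{z\ge 0}=\{\lambda:\lambda(\fg(E^u)_{z>0})\subset\fm\}$; since $\fg(E^u)_{z>0}\subset\fg(E^u)_{w>0}$ when $z$ is in the closure of the alcove of $w$, one gets $\fg^*(E^u)_{w\ge 0}\subset\fg^*(E^u)_{z\ge 0}$, not the reverse. So there is no reason for $\alpha Z$ to lie in $\fg^*(E^u)_{w\ge 0}$, and the Chevalley map at $w$ is not directly available to you.

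This is exactly the difficulty the paper's proof addresses with the Kac--Weisfeiler/Spice--Tsai input \cite{KW76,ST25}: one first shows that the image $\bar X\in\fg^*(E^u)_{z=0}$ lies in $(\Lie B)^*$ for some Borel $B\subset G(E^u)_{z=0}$, then decomposes $\bar X=X'+X_u$ with $X'\in\bar\fa^*$ and $X_u\in(\Lie U_B)^*$. The nilpotent piece $X_u$ does not affect the Chevalley image, and a lift of $X_u$ lands in $\fg^*(E^u)_{x'>0}$ for the alcove point $x'$ corresponding to $B$, so one may replace $(z,\bar X)$ by $(x',X')$ with $X'\in\bar\fa^*$. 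Only after this replacement does the comparison at an alcove interior make sense. Your sketch jumps over this step, and the incorrect containment is a symptom of that omission. The gallery/geodesic reduction and the final comparison at a shared wall then proceed much as you indicate, but they rest on this Borel trick at both endpoints.
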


\begin{proof} 
Given two Moy-Prasad types $(x,X)$ and $(y,Y)$ of depth $r$ that are associates, we will show  $\iota_x(X)=\iota_y(Y)$.     We begin with two  reductions.

First, by the definition of associativity there exists some $h \in G(F)$ such that $\Ad(h)^*X \cap Y  \neq \emptyset$.   Replacing $(x,X)$ with $h \cdot  (x,X)$ and choosing $X_0 \in X$ and $Y_0 \in Y$, we may   assume  $X \cap Y = (X_0 + \fg^*(F)_{x > r}) \cap (Y_0 + \fg^*(F)_{y > r}) \neq \emptyset$.

Second, we will show that we may assume that $r = 0$.  Choose a pair $(\alpha,L)$ that is $(x,X)$-adapted.  
   Note that
\[(\alpha X_0 + \fg^*(L)_{x > 0}) \cap (\alpha Y_0 + \fg^*(L)_{y > 0}) \neq \emptyset.\]
We also have  $i_{\alpha,x}(\bar{U}) = i^L_{1,x}(\overline{\alpha U})$ for all $U \in \fg^*(F)_{x \geq -r}$  and  $i_{\alpha,y}(\bar{V}) = i^L_{1,y}(\overline{\alpha V})$ for all $V \in \fg^*(F)_{y \geq -r}$.   To show that $\iota_x(X) = \iota_y(Y)$, we need to show that the equivalence classes represented by $(\alpha,i_{\alpha,x}(X)) = (1,i^L_{1,x}(\overline{\alpha X_0}) )$ and  $(\alpha,i_{\alpha,y}(Y)) = (1,i^L_{1,y}(\overline{\alpha Y_0}) )$ are equal.   For this, it will be enough to show $i^L_{1,x}(\overline{\alpha X_0})=i^L_{1,y}(\overline{\alpha Y_0})$.

Thus, without loss of generality we may replace $F$ with $L$  and assume 
\begin{itemize}
    \item $G$ is $F$-split,
    \item $X \in \fg^*(F)_{x=0}$ and $Y \in \fg^*(F)_{y=0}$,
    \item $X \cap Y = (X_0 + \fg^*(F)_{x>0}) \cap (Y_0 + \fg^*(F)_{y>0}) \neq \emptyset$.
\end{itemize}
We need to show that $i_{1,x}(X) = i_{1,y}(Y)$.

By \cite[Lemma 3.3]{KW76} or \cite[Prop. 3.3]{Spi21} when $p=2$, we have that $X\in\fg^*(F)_{x=0}$ is contained in $(\Lie B)^*$ for some Borel $\bar{k}$-subgroup $B\subset G(F^u)_{x=0}$, where $(\Lie B)^*\subset\fg^*(F^u)_{x=0}$ is the annihilator of $\Lie U_B$ with $U_B$ being the unipotent radical of $B$. Then $B$ corresponds to an alcove $C$ in $\cB(G,F^u)$ that contains $x$ in its closure. 
Since some apartment in $\cB(G(F^u))$ contains both $C$ and $y$ and all apartments in $\cB(G(F^u))$ are $G(F^u)$-conjugate, there is a $g \in G(F^u)$ such that $gC$ and $gy$ are contained in $\tilde{\cA}$, the apartment of $A^u$ in $\cB(G,F^u)$.    Note that  $i_{1,x}(X) = i^{F^u}_{1,gx}(gX)$ and $i_{1,y}(Y) = i^{F^u}_{1,gy}(gY)$, To ease notation, we conjugate our entire picture by $g$. 
We have $y \in \tilde{\cA}$, $x \in \bar{C} \subset \tilde{\cA}$, $\bar{A} \subset B$, and   $(X_0 + \fg^*(F^u)_{x > 0}) \cap (Y_0 + \fg^*(F^u)_{y > 0}) \neq \emptyset$.    We need to show that $i^{F^u}_{1,x}(X) = i^{F^u}_{1,y}(Y)$

Let $x'$ be any point in (the interior of) $C$. 
We may write $X=X'+X_u$ where $X'\in\bar{\fa}^*$ and $X_u\in(\Lie U_B)^*$, the latter being the annihilator of $\Lie B$. Since $\bar{A}$ acts on $X_u$ with anti-dominant weights (with respect to $B$), we have 
\[
i_{\alpha,x}(X)=i_{\alpha,x}(X')
\text{ and }
X_0+\fg^*(F^u)_{x>0}\subset X_0'+\fg^*(F^u)_{x'>0}
\]
 where $X_0, X_0' \in \fg^*(F^u)_{x \geq 0}$ are lifts of $X$ and $X'$, respectively.   Note that  
\[
\emptyset \neq (X_0 + \fg^*(F^u)_{x > 0}) \cap (Y_0 + \fg^*(F^u)_{y > 0}) \subset (X'_0 + \fg^*(F^u)_{x' > 0}) \cap (Y_0 + \fg^*(F^u)_{y > 0}).
\]
Since $x'$ is in (the interior of) an alcove, the map $\bar{\fa}^*\ira\fg^*(F^u)_{x'=0}$ is an isomorphism. 
From the  second to last row of \eqref{eq:squaresnew} we conclude that 
\begin{equation}   \label{equ:reductiontoC}
    i^{F^u}_{1,x}(X)=i^{F^u}_{1,x}(X')=i^{F^u}_{1,x'}(X').
\end{equation} 
We replace $(x,X)$ with $(x', X')$. 

Repeating the above argument for $(y,Y)$, we are reduced to the case when $x$ and $y$ are in (the interior of) some  alcoves in $\tilde{\cA}$ and 
\begin{equation} \label{equ:assocnew}
 (X_0 + \fg^*(F^u)_{x > 0}) \cap (Y_0 + \fg^*(F^u)_{y > 0}) \neq \emptyset.
\end{equation}
If the alcoves are the same, then we may take $x = y$.  Then~\eqref{equ:assocnew} shows that $X = Y$, and we are done.   So, assume that the alcoves are not the same.

For any $w$ on the geodesic $[x,y]$ from $x$ to $y$ in $\tilde{\cA}$, we have
\[
\fg^*(F^u)_{x\ge 0}\cap\fg^*(F^u)_{y\ge 0}\subset\fg^*(F^u)_{w\ge 0}
\]
and so from~\eqref{equ:assocnew} there exists $W_0 \in\fg^*(F^u)_{w\geq 0}$ such that both
\[
(X_0+\fg^*(F^u)_{x>0})\cap(W_0+\fg^*(F^u)_{w>0})\not=\emptyset \]
and
\[ (W_0+\fg^*(F^u)_{w>0})\cap(Y_0+\fg^*(F^u)_{y>0})\neq \emptyset.
\]
This allows us to reduce to the case when the alcove containing $x$ and the alcove containing $y$ have a common point $z \in [x,y]$ in their closures, i.e. to the case when the images of  $G(F^u)_{x\geq 0}$ and $G(F^u)_{y \geq 0}$ in $G(F^u)_{z=0}$ are two Borel subgroups $B_x,B_y\subset G(F^u)_{z=0}$. The nonempty intersection of~\eqref{equ:assocnew}
implies that
\[ \left(\bar{X}_0+(\Lie U_{B_x})^*\right)\cap\left(\bar{Y}_0+(\Lie U_{B_y})^*\right)\neq \emptyset.\]
Here $\bar{X}_0 \in \fg^*(F^u)_{z=0}$ is the image of  a  lift $X_0 \in \fg^*_{x \geq 0} \subset \fg^*_{z \geq 0}$ of  $X \in \fg^*_{x = 0}$; $\bar{Y}_0$ is defined similarly.
 If  $Z \in \fg^*(F^u)_{z=0}$ belongs to this intersection, then the same argument that produced~\eqref{equ:reductiontoC} shows that  $i^{F^u}_{1,x}(X)=i^{F^u}_{1,z}(Z)=i^{F^u}_{1,y}(Y)$. Hence we are done.
\end{proof}

\begin{corollary}  \label{cor:iotapi} Suppose $(\pi,V)$ is an irreducible representation of $G(F)$ of positive depth.  We can naturally associate to $\pi$ an $\iota_\pi$ in $\DLpi$.  If $\rho(\pi) \in \Z_{(p)}$, then this $\iota_\pi$ is  nontrivial.  
\end{corollary}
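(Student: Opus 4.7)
The plan is to define $\iota_\pi$ by picking any nondegenerate Moy-Prasad type $(x,X)$ for $\pi$ of depth $r = \rho(\pi)$, whose existence is guaranteed (for $\pi$ of positive depth) by the Moy-Prasad exhaustion theorem recalled in the introduction, and then setting $\iota_\pi := \iota_x(X) \in \DLr$.

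The first thing to verify is that this assignment is independent of the choice of nondegenerate Moy-Prasad type. Here I would invoke the uniqueness half of the Moy-Prasad theory (\cite[Thm.~5.2]{MP94}, also recalled in the introduction): any two nondegenerate Moy-Prasad types of depth $r$ appearing in a fixed $\pi$ are associate in the sense defined just before Lemma~\ref{lem:main1new}. Lemma~\ref{lem:main1new} then asserts that associate Moy-Prasad types are stable associates, i.e.\ have the same depth-$r$ Deligne-Lusztig parameter, so $\iota_\pi$ is independent of the choice of $(x,X)$ and depends only on $\pi$.

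The nontriviality claim under the additional hypothesis $\rho(\pi) \in \Z_{(p)}$ is then immediate from Lemma~\ref{lem:nonzeronewnew}: that lemma says that in the tame-depth regime, a Moy-Prasad type $(x,X)$ of depth $r$ is nondegenerate if and only if $\iota_x(X)$ is nontrivial. Since our representative $(x,X)$ was chosen nondegenerate, $\iota_\pi = \iota_x(X)$ is nontrivial.

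I do not expect any substantive obstacle: the corollary is almost entirely a bookkeeping consequence of Lemmas~\ref{lem:main1new} and \ref{lem:nonzeronewnew}, together with the existence/uniqueness of nondegenerate Moy-Prasad types for an irreducible representation. The only point that needs a little care is to confirm that the notion of ``associate'' used in \cite[\S5]{MP94} to compare the nondegenerate types attached to a fixed $\pi$ matches exactly the notion of associate recalled before Lemma~\ref{lem:main1new}, which it does.
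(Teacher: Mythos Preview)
Your proposal is correct and matches the paper's own proof essentially line for line: invoke Moy--Prasad existence and associativity of nondegenerate types (\cite[Thm.~5.2]{MP94}), apply Lemma~\ref{lem:main1new} for well-definedness of $\iota_\pi$, and finish with Lemma~\ref{lem:nonzeronewnew} for nontriviality when $\rho(\pi)\in\Z_{(p)}$.
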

   
\begin{proof}  Suppose $(\pi,V)$ is a positive depth  irreducible representation of $G(F)$. 
Moy and Prasad proved \cite[proof of Thm. 5.2 in Section~7]{MP94} that nondegenerate Moy-Prasad types occur in $\pi$. Moreover, any two nondegenerate Moy-Prasad types that occur in $\pi$ (i) are  of the same depth and (ii) are associate.  Lemma~\ref{lem:main1new} shows that we then have a well-defined  Deligne-Lusztig parameter $\iota_\pi := \iota_y(Y) \in\DLpi$ attached to $\pi$.  If $\rho(\pi) \in \Z_{(p)}$, then $\iota_\pi$ is nontrivial from Lemma~\ref{lem:nonzeronewnew}.
\end{proof}

\begin{remark}
In the case when $G$ is a simply-connected $F$-split
group and $\rho(\pi)$ is integral, versions of
Lemma~\ref{lem:nonzeronewnew} and Lemma~\ref{lem:main1new} were proved in \cite[Proposition 5.9]{CB24} via a different (and indirect)
method.
\end{remark}

\section{Restricted depth-\texorpdfstring{$r$}{r} parameters}  \label{sec:C2Bnew}
Our next step is to construct a restricted Langlands parameter for any $\iota \in \DLr$ with $r > 0$.   

\subsection{A construction of restricted toral homomorphisms arising from Deligne-Lusztig parameters}  \label{sec:restrictedtoralhoms}
Fix $\iota \in \DLr$ with $r > 0$.  

Our first goal is to associate to $\iota$ a continuous homomorphism from $\Group_F^{r:r+}$ to $A^\vee$.

\begin{definition}
    Denote by $i_{\alpha,E}^A:\fa^*(E^u)_{=-r}\sra\bar{\fa}^*\quo W$ the composition of the last two rows of \eqref{eq:squaresnew}.
\end{definition}

\begin{lemma} \label{lem:iotatoMP}
 There exist a finite Galois extension $E/F$, an element $\beta \in E$ with $\val(\beta) = r$, and $X \in \fa^*(E)_{=-r}$ such that  $E$ is $(X,r,A)$-adapted and $(\beta,i_{\beta,E}^A(X)) \in \iota$.  
\end{lemma}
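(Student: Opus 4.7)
The plan is to begin with an arbitrary representative $(\beta_0, Z_0) \in \iota$ and build the desired field $E$, element $\beta \in E$, and element $X \in \fa^*(E)_{=-r}$ in three stages: first enlarge $F$ to a Galois extension with good ramification and over which $A$ splits; then pick $\beta \in E$ and use the equivalence relation on $\DLr$ to convert $(\beta_0, Z_0)$ to a representative with first coordinate $\beta$; finally, take an unramified extension so that a lift of the second coordinate is defined over the residue field.

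Concretely, by Lemma~\ref{lem:Eexists} applied with $n$ the denominator of $r$ and $\varepsilon = r$, I obtain a finite Galois extension $L/F$ with $r \cdot e(L/F) \in \Z$ and $u(L/F) < r$. Since $A$ splits over $F^t$, there is a finite tame Galois extension $M/F$ over which $A$ splits, and the ``moreover'' part of Lemma~\ref{lem:Eexists} guarantees that $E_1 := LM$ is a finite Galois extension of $F$ still satisfying the two ramification conditions. Choose $\beta \in E_1$ with $\val(\beta) = r$; this is possible because $r \cdot e(E_1/F) \in \Z$. Set $\bar c := (\beta/\beta_0) + \fm_{\bar F} \in \bar k^\times$. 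By the definition of the equivalence on $\DLr$, the pair $(\beta, j_{\bar c}(Z_0))$ lies in $\iota$.

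Next, lift $j_{\bar c}(Z_0) \in (\bar\fa^* \quo W)(\bar k)$ to some $\bar X_0 \in \bar\fa^*(\bar k)$. This $\bar X_0$ is defined over a finite extension $k_1/k_F$, and we may take $k_1 \supset k_{E_1}$. Let $F_1/F$ be the unramified extension with residue field $k_1$ and put $E := E_1 F_1$. Then $E/F$ is Galois as the compositum of two Galois extensions, $E/E_1$ is unramified so the breaks in upper numbering (and the ramification index over $F$) are unchanged, and $A$ remains $E$-split; the residue field of $E$ is $k_1$. Using the identification $\bar\fa^*(k_1) = \fa^*(E)_{=0}$, choose a lift $Y_0 \in \fa^*(E)_{\ge 0}$ of $\bar X_0$, and define $X \in \fa^*(E)_{=-r}$ to be the image of $\beta^{-1} Y_0 \in \fa^*(E)_{\ge -r}$. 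By construction $\beta \cdot X = \bar X_0$ in $\fa^*(E)_{=0} = \bar\fa^*$.

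To finish, one checks that $E$ is $(X,r,A)$-adapted -- the ramification conditions and $E$-splitness of $A$ are recorded above, and $X$ lies in $\fa^*(E)_{=-r}$ by definition -- and then traces through the definition of $i_{\beta,E}^A$ to get $i_{\beta,E}^A(X) = [\beta \cdot X] = [\bar X_0] = j_{\bar c}(Z_0)$ in $\bar\fa^* \quo W$, so $(\beta, i_{\beta,E}^A(X)) = (\beta, j_{\bar c}(Z_0)) \in \iota$. The only nontrivial point is the bookkeeping for the equivalence relation: the given $\beta_0$ will in general not lie in any field satisfying our ramification conditions, so replacing $(\beta_0, Z_0)$ by $(\beta, j_{\bar c}(Z_0))$ via the equivalence is what allows us to choose $\beta$ inside an $(X,r,A)$-adapted extension.
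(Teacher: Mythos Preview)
Your proof is correct and follows essentially the same approach as the paper's: both start from an arbitrary representative of $\iota$, use Lemma~\ref{lem:Eexists} to produce a Galois extension over which $A$ splits with the required ramification bounds, use the equivalence relation on $\DLr$ to replace the first coordinate by an element $\beta$ of that field, and then take a further unramified extension to realize a lift of the second coordinate in $\fa^*(E)_{=-r}$. The only cosmetic difference is that the paper first arranges $\overline{\beta/\beta_0}=1$ (by replacing $\beta$ with $c_0\beta$ for a suitable lift $c_0$) so that the second coordinate is unchanged, whereas you keep $\beta$ fixed and instead apply $j_{\bar c}$ to the second coordinate; these are equivalent bookkeeping choices.
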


\begin{proof}  Suppose $(\alpha,Y) \in \iota$.
Thanks to Lemma~\ref{lem:Eexists} we can choose a finite Galois extension $E$ of $F$ such that $u(E/F) < r$ and  $e(E/F) \cdot r \in \Z$.  Since $A$ splits over a tame extension, by Lemma~\ref{lem:Eexists} we may also assume that $A$ is $E$-split.   Choose $\beta \in E$ with $\val(\beta) = r$.  
Let $c \in \bar{k}^\times$ denote the image of $\alpha/\beta$ in $\cO^\times_{\bar{F}}/(1+\fm_{\bar{F}})$.  Without loss of generality, we may replace $E$ with a finite unramified extension of itself such that $c \in k_E$.  Replace $\beta$ with $c_0 \beta \in E$ where  $c_0 \in \cO_E$ is a lift of $c$.     Choose $X \in \fa^*(E^u)_{=-r}$ with $i_{\beta,E}^A(X)=Y$.   By replacing $E$ with a finite unramified extension of itself, we may assume $X \in \fa^*(E)_{=-r}$.
  Note that  $E$ is $(X,r,A)$-adapted and $(\beta,i_{\beta,E}^A(X)) \in \iota$.
  \end{proof}

Choose $E$, $\beta$, and $X$ as in the statement of Lemma~\ref{lem:iotatoMP}.
Let $\phi_{X,E}^A \colon \Group_F^{r:r+} \ra G^{\vee}$ denote the composition of $A^\vee \ira G^\vee$ and the continuous homomorphism $\Group_F^{r:r+} \ra A^{\vee}$ constructed in Section~\ref{sec:constructphi}.  Thanks to Corollary~\ref{cor:base} we know that $\phi_{X,E}^A$ is independent of the choice of $E$, and so we may write $\phi_{X}^A$.

\begin{lemma} \label{lem:welldefinedoniota}
The $G^\vee$-conjugacy class of $\phi_X^A$ depends only on $\iota$.
\end{lemma}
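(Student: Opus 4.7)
The plan is to pass to a common adapted extension $E$, translate the equivalence defining $\DLr$ into a Weyl group relation between $X_1$ and $X_2$ inside $\fa^*(E)_{=-r}$, and then transport this, via functoriality of local Langlands for tori, into conjugation by an element of $N_{G^\vee}(A^\vee) \subset G^\vee$.

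I would begin with two tuples $(E_1, \beta_1, X_1)$ and $(E_2, \beta_2, X_2)$ provided by Lemma~\ref{lem:iotatoMP} and form the Galois closure $E$ of $E_1 E_2$. The argument of Lemma~\ref{lem:adatptedreduction}, applied with $T = A$, shows $E$ is $(X_i, r, A)$-adapted for each $i$, so by Corollary~\ref{cor:base} we may regard each $X_i$ as an element of $\fa^*(E)_{=-r}$ and replace $\phi_{X_i,E_i}^A$ by $\phi_{X_i,E}^A$. After a further tame enlargement of $E$, which is adaptedness-preserving by the last assertion of Lemma~\ref{lem:Eexists}, I would arrange that every $w \in W$ admits a lift $n_w \in N_G(A)(E)$. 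The equivalence $(\beta_1, i_{\beta_1,E}^A(X_1)) \equiv (\beta_2, i_{\beta_2,E}^A(X_2))$ then reads, using $\beta_2/\beta_1 \in \cO_E^\times$ as a lift of $c = \overline{\beta_2/\beta_1} \in \bar{k}^\times$ and the identity $i_{\beta,E}^A(X) = \overline{\beta X}$ in $\bar{\fa}^* \quo W$, as $\overline{\beta_2 X_2} \equiv \overline{\beta_2 X_1} \pmod{W}$ in $\bar{\fa}^*$. Hence there exists $w \in W$ with $\beta_2 X_2 = w \cdot (\beta_2 X_1)$ in $\bar{\fa}^*$; cancelling $\beta_2$ and using the injection $\fa^*(E)_{=-r} \hookrightarrow \fa^*(E^u)_{=-r}$ yields $X_2 = wX_1$ inside $\fa^*(E)_{=-r}$.

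To finish, fix lifts $n_w \in N_G(A)(E)$ and $\hat{n}_w \in N_{G^\vee}(A^\vee)$. Using the $N_G(A)(E)$-equivariance of the Moy-Prasad isomorphism $A(E)_{=s} \cong \fa(E)_{=s}$ together with the $W$-invariance of the pairing $\fa(E) \times \fa^*(E) \to E$, one checks that $\chi_{X_2,E}^A = \chi_{X_1,E}^A \circ \Int(n_w^{-1})$ as characters on $A(E)_{\geq r - c_{E/F}}$. Functoriality of the local Langlands correspondence for tori applied to $\Int(n_w^{-1}) \colon A \to A$, together with the canonical identification $N_G(A)/A \cong N_{G^\vee}(A^\vee)/A^\vee$, then gives $\phi_{X_2,E}^A = \hat{n}_w^{-1}\,\phi_{X_1,E}^A\,\hat{n}_w$, which is $G^\vee$-conjugation. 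The most delicate step will be upgrading the abstract equivalence on $\DLr$, a priori only a relation modulo $W$ on $\bar{k}$-points, to the honest equality $X_2 = wX_1$ inside $\fa^*(E)_{=-r}$ with $w$ represented by an element of $N_G(A)(E)$; everything after that reduces to the familiar package of Moy-Prasad equivariance and local Langlands functoriality for tori.
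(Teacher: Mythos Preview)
Your argument is correct and follows essentially the same line as the paper's: pass to a common adapted field, convert the $\DLr$-equivalence into the statement that $X_1$ and $X_2$ lie in the same $W$-orbit in $\fa^*(E^u)_{=-r}$, and deduce that $\phi_{X_1}^A$ and $\phi_{X_2}^A$ differ by an element of $N_{G^\vee}(A^\vee)/A^\vee$.  The only wrinkle is that your enlargement of $E$ to lift each $w$ to $N_G(A)(E)$ is unnecessary---since $A$ is already $E$-split, the $W$-action on $A$ (and on $\fa^*$) is $E$-rational regardless, so functoriality of local Langlands for the automorphism $w\colon A\to A$ applies without choosing such lifts.
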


Thanks to Lemma~\ref{lem:welldefinedoniota} we have a map  from $\DLr$ to the set of $G^\vee$-conjugacy classes of tame restricted depth-$r$ parameters that is given by sending $\iota \in \DLr$ to the $G^\vee$-conjugacy class of $\phi^A_X$.  We abuse notation and write $\varphi_\iota$ for an element of the $G^\vee$-conjugacy class of $\phi^A_X$.

\begin{proof} 
 Suppose we have a Galois extension $E'/F$, a $\beta' \in E'$ with $\val(\beta') = r$, and $X' \in \fa^*(E')_{=-r}$ such that  $E'$ is $(X',r,A)$-adapted and $(\beta',i_{\beta',E}^A(X')) \in \iota$.  
We will show that $\phi_{X'}^A$ is $G^\vee$-conjugate to $\phi_{X}^A$.

From Corollary~\ref{cor:base} we know that the construction of  $\phi_{X'}^A$ (resp. $\phi_X^A$) is independent of the choice of the auxiliary field $E'$ (resp. $E$).  Using Lemma~\ref{lem:adatptedreduction} we may assume $E'=E$ is an extension of $F$ such that $\beta, \beta' \in E^\times$ and $E$ is both $(X,r,A)$- and $(X',r,A)$-adapted.

Set $Y = i_{\beta,E}^A(X)$ and $Y' = i_{\beta',E}^A(X')$.
 Since $(\beta,Y), (\beta',Y') \in \iota$, we have $Y'=j_c(Y)$ where $c = \beta'/\beta +\fm_{\bar{F}}$.  It follows that  $i^A_{\beta',E}(X) = Y'$.   Thus, we are reduced to showing that if  $X',X \in \fa^*(E^{ u})_{x=-r}$ such that $i_{\beta',E}^A(X')=i_{\beta',E}^A(X)$, then $\phi_{X'}^A$ is $G^\vee$-conjugate to $\phi_{X}^A$.
 Since $i_{\beta',E}^A(X')=i_{\beta',E}^A(X)$, the elements  $X$ and $X'$ are in the same $W = N_G(A)/A$-orbit. This implies that $\phi^A_{X},\phi^A_{X'}:\Group_F^{r:r+}\ra A^{\vee} \ira G^\vee$ are in the same $N_{G^{\vee}}(A^{\vee})/A^{\vee}$-orbit, hence they are  $G^{\vee}$-conjugate.
\end{proof}
 
Recall that we write $\varphi_\iota$ for an element of the $G^\vee$-conjugacy class corresponding to $\iota \in \DLr$.

\begin{corollary}
\label{cor:comparenew}  Suppose $T$ is a maximal $F^t$-split torus in $G$ and $(x,X) \in \MP_r$ is $T$-toral as in Section~\ref{sec:Ttoraltype}. 
    The homomorphism $\varphi_{\iota_x(X)} \colon \Group_F^{r:r+} \ra G^{\vee}$ constructed above  is $G^{\vee}$-conjugate to the homomorphism $\phi^T_{X} \colon \Group_F^{r:r+}\ra T^{\vee}$
    that was constructed in Section~\ref{sec:constructphi}.
\end{corollary}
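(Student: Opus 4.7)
The plan is to realize $\varphi_{\iota_x(X)}$ and $\phi^T_X$ by parallel applications of the construction in Section~\ref{sec:constructphi}, applied to $A$ and $T$ respectively, and then to show that these two homomorphisms differ by the dual of an inner automorphism of $G$ sending $T$ to $A$.

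First I would enlarge the $(X,r,T)$-adapted extension $E$ so that $A$ is also $E$-split; by Lemma~\ref{lem:Eexists} this amounts to passing to a finite unramified extension of $E$, under which $\phi^T_X$ is unchanged by virtue of Corollary~\ref{cor:base}. Since $T$ is $E$-split, $G$ itself is $E$-split, and $T$ and $A$ are two maximal $E$-split tori in the split group $G_E$; hence they are $G(E)$-conjugate. Pick $g \in G(E)$ with $\Ad(g)T = A$ and set $X' := \Ad^*(g)X \in \fa^*(E)_{=-r}$. Fix any $\alpha \in E$ with $\val(\alpha) = r$.

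Next I would verify that $\phi^A_{X'}$ is a legitimate representative of $\varphi_{\iota_x(X)}$, namely that $(\alpha,\,i^A_{\alpha,E}(X')) \in \iota_x(X)$. By the remark following Lemma~\ref{lem:indofhandE} we have $i_{\alpha,x} = i^E_{\alpha,x} \circ \iota_E$, and in the construction of $i^E_{\alpha,x}$ any element of $G(E)$ sending $x$ into the $E$-apartment of $A$ may serve as the auxiliary element~$h$. Since $gx$ lies in this apartment, we may take $h = g$. The top route of diagram~\eqref{eq:squaresnew}, interpreted over $E$, then sends $X$ to $\overline{\alpha \Ad^*(g)X} = \overline{\alpha X'} \in \fg^*(E^u)_{gx=0}$, which is precisely the image of $\alpha X' \in \bar{\fa}^*$ under the embedding $\bar{\fa}^* \hookrightarrow \fg^*(E^u)_{gx=0}$ appearing in \eqref{eq:dualChnew}. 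Projecting to $\bar{\fa}^* \quo W$ yields the desired equality $i^E_{\alpha,x}(X) = i^A_{\alpha,E}(X')$.

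Finally, I would compare $\phi^T_X$ and $\phi^A_{X'}$ directly. For $a \in A(E)_{\ge r - c_{E/F}}$, compatibility of the Moy-Prasad logarithm with the adjoint action yields
\[ \chi^A_{X',E}(a) = \Lambda_E(\log(a), X') = \Lambda_E(\Ad(g^{-1})\log(a), X) = \chi^T_{X,E}(g^{-1}ag), \]
so $\chi^A_{X',E} = \chi^T_{X,E} \circ \Int(g^{-1})$, where $\Int(g^{-1}) \colon A \to T$. Applying functoriality of the local Langlands correspondence for tori (as in Lemma~\ref{lem:LCFT}) and restricting to $\Group_F^{r:r+}$ gives $\phi^A_{X'} = (\Int(g^{-1}))^\vee \circ \phi^T_X$. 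The main point remaining is to show that this dual isomorphism $T^\vee \xrightarrow{\sim} A^\vee$ is realized by conjugation by some $\hat g \in G^\vee$, compatibly with the chosen embeddings $T^\vee, A^\vee \hookrightarrow G^\vee$; this is the standard fact that an inner automorphism of $G$ descends, via the duality between maximal tori and the pinning of $G^\vee$, to an inner automorphism of $G^\vee$.

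The main obstacle I anticipate is the final bookkeeping step involving the dual of the inner automorphism $\Int(g^{-1})$. The equality $i^E_{\alpha,x}(X) = i^A_{\alpha,E}(X')$ and the pullback identity for $\chi^A_{X',E}$ are mechanical once the extension $E$ and the element $g$ are in place, but verifying that the abstract isomorphism $(\Int(g^{-1}))^\vee$ really is conjugation by an element of $G^\vee$ (and not merely an automorphism of some abstract dual torus) requires being precise about how the embeddings $T^\vee, A^\vee \hookrightarrow G^\vee$ are normalized relative to a fixed pinning of $G^\vee$.
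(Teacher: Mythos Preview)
Your proposal is correct and follows essentially the same route as the paper's proof: pick $g\in G(E)$ with $\Ad(g)T=A$, set $X'=\Ad^*(g)X$, check that $(\alpha,i^A_{\alpha,E}(X'))\in\iota_x(X)$ so that $\phi^A_{X'}$ represents $\varphi_{\iota_x(X)}$, and then compare $\phi^T_X$ with $\phi^A_{X'}$ via the inner automorphism. The paper compresses your last two paragraphs into the single phrase ``by construction we have that $\phi^T_{X,E}$ and $\phi^A_{\Ad^*(g)X,E}$ lie in the same $G^\vee$-conjugacy class,'' so the bookkeeping you flag as the main obstacle is precisely what the paper leaves implicit; your instinct that it reduces to the standard compatibility of embeddings $T^\vee,A^\vee\hookrightarrow G^\vee$ (both only defined up to $G^\vee$-conjugacy, and the conjugation $\Int(g)$ preserves the root datum) is correct. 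One small note: the functoriality you need for the comparison $\phi^A_{X'}=(\Int(g^{-1}))^\vee\circ\phi^T_X$ is functoriality of LLC for tori under \emph{isomorphisms}, which is more elementary than Lemma~\ref{lem:LCFT} (which treats norm maps); and your initial enlargement of $E$ is unnecessary, since $T$ being $E$-split already forces $G$, hence $A$, to be $E$-split.
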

 
\begin{proof}
Since $(x,X) \in \MP_r$ is $T$-toral, there exists  a finite Galois extension $E$ of $F$  such that 
$T$ is defined over $E$,
$E$ is $(X,r,T)$-adapted, and
$x$ belongs to the building of $T(E^t)$ in $\cB(G(E^t))$.

 In Section~\ref{sec:constructphi} we constructed a homomorphism $\phi^T_{X,E}$ from this data.   Since $E$ is $(X,r,T)$-adapted, we can choose  $\alpha \in E^\times$ with $\val(\alpha) = r$.

Since $A$ and $T$ are both $E$-split, there exists $g \in G(E)$ such that $A_E = \Int(g)  T$.  Since $A_E$ and $T$ are isomorphic as $E$-tori,  by construction we have that  $\phi^T_{X,E}$  and $\phi^A_{\Ad(g)^*X,E}$  lie in the same $G^\vee$-conjugacy class. 
Thus, it will be enough to show that $\phi^A_{\Ad(g)^*X,E}$ and $\varphi_{\iota_x(X)}$ are $G^\vee$-conjugate.

The pair $(\alpha, i_{\alpha,x}(X))$ belongs to the equivalence class $\iota_x(X)$.   Since 
\[  i_{\alpha,x}(X) =  i^E_{\alpha,x}(X) = i^E_{\alpha,gx}(\Ad^*(g)X),\]
we conclude that $(\alpha, \Ad^*(g)X) $ also belongs to $\iota_x(X)$.    The result now follows from 
Lemma~\ref{lem:welldefinedoniota}.
\end{proof}

\subsection{Restricted depth-$r$ parameters and Deligne-Lusztig parameters}

\begin{definition}\label{def:TRP} Suppose $r > 0$.  A continuous homomorphism $\varphi:\Group_F^{r:r+}\ra G^{\vee}$ or, equivalently, a continuous homomorphism $\varphi:\Group_F^r \ra G^{\vee}$ that is trivial on $\Group_F^{r+}$, is called a 
 {\bf restricted depth-$r$ parameter}.  We say $\varphi$ is {\bf tame} provided that
\begin{enumerate}
    \item $\im(\varphi)$ is contained in a (maximal) torus of $G^\vee$, and
    \item $\varphi$ is trivial on $\Group_F^r\cap(\Group_F^{0+},\Group_F^{0+})$, where $(\Group_F^{0+},\Group_F^{0+})$ is the closure of the commutator of $\Group_F^{0+}$.
\end{enumerate}
We denote by $\TPr$ the set of $G^{\vee}$-conjugacy classes of tame restricted depth-$r$ parameters.
\end{definition}

\begin{remark}\label{rmk:tame-eqiv}  For $r  > 0$, the sets $\TPr$ defined in the Introduction and defined in Definition~\ref{def:TRP} are the same. Indeed, if $\varphi\equiv\til{\varphi}|_{\Group_F^r}$ for some $\til{\varphi}:\Group_F^{0+}\ra T^{\vee}$, then $\im(\varphi)$ is contained in a maximal torus and $\varphi$ is trivial on $\Group_F^r\cap (\Group_F^{0+},\Group_F^{0+})$. Conversely, if $\varphi$ is contained in a maximal torus $T^{\vee}\subset G^{\vee}$ and trivial on both $\Group_F^r\cap (\Group_F^{0+},\Group_F^{0+})$ and $\Group_F^{r+}$, then it can be viewed as a homomorphism $\varphi:\Group_F^{r}/\Group_F^{r+}(\Group_F^r\cap (\Group_F^{0+},\Group_F^{0+}))\ra T^{\vee}$. The quotient group $\Group_F^{r}/\Group_F^{r+}(\Group_F^r\cap (\Group_F^{0+},\Group_F^{0+}))$ is a closed subgroup of the abelian pro-finite topological group $\Group_F^{0+}/\Group_F^{r+}(\Group_F^{0+},\Group_F^{0+})$. Hence there exists $\til{\varphi}:\Group_F^{0+}/\Group_F^{r+}(\Group_F^{0+},\Group_F^{0+})\ra T^{\vee}$ that restricts to $\varphi$. This shows that  the definition given in the Introduction and Definition~\ref{def:TRP} are equivalent.
\end{remark}

As discussed in Section~\ref{sec:restrictedtoralhoms},  for $\iota \in \DLr$ the homomorphism $\varphi_{\iota}$ is conjugate to $\phi_X^A$ for some $X$, and hence it is the restriction to $\Group_F^r$ of an $L$-homomorphism $W_F \rightarrow  A^\vee$.   Thus, the $G^\vee$-conjugacy class of $\varphi_{\iota}$  belongs to $\TPr$.   The following lemma shows that the converse is true as well.

\begin{lemma}\label{lem:main2new}    Suppose $G$ is a connected reductive $F$-group that splits over $F^t$ and $r\in\Q_{>0}$. The map $\iota \mapsto \varphi_{\iota}$ induces a bijection between $\DLr$ and $\TPr$.
\end{lemma}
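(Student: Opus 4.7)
The plan is to construct an explicit inverse map $\TPr \to \DLr$ by reversing the construction of Sections~\ref{sec:constructphi} and~\ref{sec:restrictedtoralhoms} through local Langlands for the tame torus $A$, and to verify injectivity by translating $G^\vee$-conjugacy of parameters with image in $A^\vee$ into $W$-equivalence on $\fa^*$.

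\emph{Surjectivity.}  Fix $\varphi \in \TPr$.  By Remark~\ref{rmk:tame-eqiv} there exist a maximal torus $T^\vee \subset G^\vee$ and an extension $\tilde\varphi:\Group_F^{0+}\ra T^\vee$ of $\varphi$ that is trivial on $\Group_F^{r+}$.  Since all maximal tori in $G^\vee$ are $G^\vee$-conjugate, after conjugation I may assume $T^\vee = A^\vee$.  By Lemma~\ref{lem:Eexists} I may choose a finite Galois extension $E/F$ with $A$ split over $E$, $r\cdot e(E/F)\in\Z$, and $u(E/F)<r$.  Lemma~\ref{lem:TFAE} then gives $\Group_F^{r:r+} = \Group_E^{(r-c_{E/F}):(r-c_{E/F})+}$.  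Extend $\tilde\varphi|_{\Group_E^{0+}}$ to an $L$-homomorphism $W_E\ra A^\vee$ and apply local Langlands for the $E$-split torus $A$ to obtain a character $\psi$ of $A(E)$.  As in the paragraph after \eqref{eq:characteronlieT}, the preservation of depth (\cite[\S7.5]{Yu09}) forces $\psi|_{A(E)_{\ge r-c_{E/F}}}$ to factor through $A(E)_{=r-c_{E/F}}$.  Via the Moy-Prasad isomorphism and the perfect pairing with $\fa^*(E)_{=-r}$, this restriction is $\chi^A_{X,E}$ for a unique $X\in\fa^*(E)_{=-r}$.  For any $\beta\in E^\times$ with $\val(\beta)=r$ the class $\iota := [(\beta,i^A_{\beta,E}(X))]\in\DLr$ satisfies $\varphi_\iota = \phi^A_{X,E} = \varphi$ up to $G^\vee$-conjugacy.

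\emph{Injectivity.}  Suppose $\iota,\iota'\in\DLr$ give $G^\vee$-conjugate parameters.  Using Lemma~\ref{lem:iotatoMP} twice and Lemma~\ref{lem:adatptedreduction} to take a compositum, I may choose a single $(X,r,A)$- and $(X',r,A)$-adapted extension $E/F$, a single $\beta\in E^\times$ with $\val(\beta)=r$, and elements $X,X'\in\fa^*(E)_{=-r}$ with $\iota=[(\beta,i^A_{\beta,E}(X))]$ and $\iota'=[(\beta,i^A_{\beta,E}(X'))]$.  Pick $g\in G^\vee$ with $g\phi^A_{X,E}g^{-1}=\phi^A_{X',E}$.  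Let $S:=\im(\phi^A_{X',E})\subset A^\vee$; then $A^\vee$ and $gA^\vee g^{-1}$ are two maximal tori of $G^\vee$ containing $S$, so by the standard conjugacy theorem there exists $h\in Z_{G^\vee}(S)^\circ$ with $hA^\vee h^{-1} = gA^\vee g^{-1}$.  Then $n := h^{-1}g\in N_{G^\vee}(A^\vee)$ and $n\phi^A_{X,E}n^{-1} = h^{-1}\phi^A_{X',E}h = \phi^A_{X',E}$.  By the functoriality of local Langlands for tori (applied to conjugation by $n$), the induced $W$-action on $\fa^*(E)_{=-r}$ via the Moy-Prasad isomorphism matches the $W$-action on $A^\vee$, so writing $w$ for the image of $n$ in $W=N_{G^\vee}(A^\vee)/A^\vee$ I conclude $\chi^A_{w\cdot X,E} = \chi^A_{X',E}$, hence $X'=w\cdot X$.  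Therefore $i^A_{\beta,E}(X')=i^A_{\beta,E}(X)$ in $\bar\fa^*\quo W$, which gives $\iota=\iota'$.

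\emph{Main obstacle.}  The subtlest point is the step that replaces an arbitrary $G^\vee$-conjugacy by an $N_{G^\vee}(A^\vee)$-conjugacy: because $\im(\varphi_\iota)$ need not be Zariski-dense in $A^\vee$, one cannot simply conclude $g\in N_{G^\vee}(A^\vee)$, and the conjugacy theorem for maximal tori containing a fixed subtorus must be invoked.  A secondary delicacy is the matching of the $W$-action on $\fa^*$ (used in the definition of $i^A_{\beta,E}$) with the $W$-action on restricted parameters into $A^\vee$; this is $W$-equivariance of local Langlands for tori, which follows from functoriality applied to $\Int(n):A\ra A$.  All remaining steps are a direct unpacking of the constructions in Sections~\ref{sec:constructphi}--\ref{sec:restrictedtoralhoms}.
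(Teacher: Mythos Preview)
Your injectivity argument is essentially the paper's: your $Z_{G^\vee}(S)^\circ$ is exactly the paper's $M_2^\vee = C_{G^\vee}(\phi_{X'}^A)^\circ$, and the passage from $G^\vee$-conjugacy to $N_{G^\vee}(A^\vee)$-conjugacy is carried out the same way.

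There is, however, a genuine gap in your surjectivity argument.  You write ``Extend $\tilde\varphi|_{\Group_E^{0+}}$ to an $L$-homomorphism $W_E\ra A^\vee$'' without justification.  Since $A^\vee$ is abelian, such an extension exists iff $\tilde\varphi|_{\Group_E^{0+}}$ is trivial on $\Group_E^{0+}\cap\overline{[W_E,W_E]}$, equivalently iff it is $W_E$-conjugation invariant.  Nothing in your choice of $E$ (via Lemma~\ref{lem:Eexists}) guarantees this: you selected $E$ depending only on $r$ and on the splitting field of $A$, not on $\tilde\varphi$.  The paper confronts exactly this point.  It first chooses an open subgroup $V\subset\Gal(\bar F/F)$ with $[V,\Group_F^{0+}]\subset\ker\tilde\varphi$ (which exists because $\ker\tilde\varphi$ is open and $\Group_F^{0+}$ is compact), then extends $\tilde\varphi$ to $V\Group_F^{0+}$.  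Because $V\Group_F^{0+}$ contains $\Group_F^{0+}$, the fixed field $L$ of $V\Group_F^{0+}$ is \emph{tame}, and one can enlarge $L$ (still tamely) so that $A$ is $L$-split.  Local Langlands with depth preservation over $L$ then produces $X\in\fa^*(L)_{=-r}$, and only afterwards does one pass to an $(X,r,A)$-adapted $E$ via Lemma~\ref{lem:lemma12}.

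Your approach can be repaired along these lines: after obtaining $\tilde\varphi$, first find $V$ and the tame $L$ as above, then enlarge your $E$ to $EL$ using the ``Moreover'' clause of Lemma~\ref{lem:Eexists}; now $W_{EL}\subset V\Group_F^{0+}$ and the extension to $W_{EL}$ is simply the restriction of $\tilde\varphi'$.  But as written the extension step is unjustified, and this is not the step you flagged as the main obstacle.
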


\begin{proof}
We first show that the map $\iota \mapsto \varphi_{\iota}$ induces an injection from $\DLr$ to $\TPr$.  Suppose $\iota_j \in \DLr$ for $j \in \{1,2\}$ such that $\varphi_{\iota_1}$ is $G^\vee$-conjugate to $\varphi_{\iota_2}$.

Thanks to Lemma~\ref{lem:iotatoMP} for $j \in \{1,2\}$ 
 there exist a finite Galois extension $E_j/F$, a $\beta_j \in E_j$ with $\val(\beta_j) = r$, and $X_j \in \fa^*(E_j)_{=-r}$ such that  $E_j$ is $(X_j,r,A)$-adapted and $(\beta_j,\iota_{\beta_j,E}^A(X_j)) \in \iota_j$.    Using Lemma~\ref{lem:adatptedreduction} we may assume $E = E_1 = E_2$.
Since $(\beta_2,\iota_{\beta_2,E}^A(X_2)) \equiv (\beta_1,j_c(\iota_{\beta_2,E}^A(X_2)))$ where $c = \beta_2/\beta_1  +\fm_{\bar{F}}$,  we may assume $\beta_1 = \beta_2$.   Set $\beta = \beta_1 = \beta_2$.  We now need to show that $(\beta,\iota_{\beta,E}^A(X_1)) \equiv (\beta,\iota_{\beta,E}^A(X_2))$.

Recall that $\varphi_{\iota_j} = \phi_{X_j}^A$, so there exists $\gamma \in G^\vee$ such that 
$\phi_{X_2}^A = {}^{\gamma}(\phi_{X_1}^A)$.

 Let $M^{\vee}_j:=C_{G^{\vee}}(\phi_{X_j}^A)^o$. We then have  $M^{\vee}_2={}^{\gamma}M^{\vee}_1$. Since  $M^{\vee}_1$ and $M^{\vee}_2$ contain $A^{\vee}$, 
 there exists $m_2 \in M^{\vee}_2$ such that $m_2 \gamma$ normalizes $A^\vee$.  Replacing $\gamma$ with $m_2 \gamma$, we have $M^{\vee}_2={}^{\gamma}M^{\vee}_1$ and $\gamma$ normalizes $A^{\vee}$.   Denote by $w$ the image of $\gamma$ in $N_{G^{\vee}}(A^{\vee})/A^{\vee}$, so that $\phi^A_{X_2} ={}^{w}\phi^A_{X_1}$.  This implies that $X_2$ belongs to the $W$-orbit of $X_1$ in $\fa^*(E^u)_{x=r}$,
which implies 
$\iota_{\beta,E}^A(X_1) = \iota_{\beta,E}^A(X_2)$ in $\bar{\fa}^*\quo W$

We now show that the induced map from $\DLr$ to $\TPr$ is surjective. We begin with a continuous homomorphism $\varphi:\Group_F^{r:r+}\ra A^{\vee}$ that is a tame restricted depth-$r$ parameter as in the Introduction.  Let $\til{\varphi}:\Group_F^{0+}/\Group_F^{r+}\ra A^{\vee}$ be an extension of $\varphi$. Let $U\subset \Group_F^{0+}$ be an open subgroup that is contained in the kernel of $\til{\varphi}$. Since $\Group_F^{0+}$ is compact, there exists an open subgroup $V\subset\Gal(\bar{F}/F)$ such that $\sigma\tau\sigma^{-1}\tau^{-1}\in U$ for any $\sigma\in V$, $\tau\in \Group_F^{0+}$. In particular, $\til{\varphi}$ is invariant under conjugation by $V$, and thus it extends to $\til{\varphi}':V\Group_F^{0+}\ra A^{\vee}$. We can find $L/F$ finite tame such that $A$ is $L$-split and  $W_L\subset\Gal(\bar{F}/L)\subset V\cdot \Group_F^{0+}$. 

By local Langlands for tame tori and its preservation of depth, we know that $\til{\varphi}'|_{W_L}$ corresponds to a continuous character $\chi:A(L)/A(L)_{>r}\ra\Cc$. The restriction $\chi|_{A(L)_{=r}}$ then gives a character on $\fa(L)_{=r}$ necessarily of the form
\[
W\mapsto\Lambda_L(W,X)
\]
for some $X\in \fa^*(L)_{=-r} \subset \fa^*(\bar{F})_{=-r}$. 

Using Lemma~\ref{lem:lemma12} we can find a Galois extension $E/F$ such that $E$ is $(X,r,A)$-adapted.   Choose $\alpha \in E$ such that $\val(\alpha) = r$.  Let $Y:=\iota_{\alpha,E}^A(X)$. Then $(\alpha,Y)\in\DLr$ is mapped to $\varphi\in\TPr$. 
\end{proof}

\subsection{A proof of Theorem~\ref{thm:mainthm} when $r>0$}

\begin{lemma}  \label{lem:pfinpos}
Theorem~\ref{thm:mainthm} is valid in the positive depth situation.  That is, for all $r>0$ such that $\Irr(G(F))_r \neq \emptyset$ there is a map 
\[
\Irr(G(F))_r \ra \TPr. 
\]
Moreover, if we also have $r \in \Z_{(p)}$, then every element of the image of this map is nontrivial. 
\end{lemma}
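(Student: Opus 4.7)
The plan is to assemble the pieces already in place rather than produce anything new. Given $\pi \in \Irr(G(F))_r$ with $r > 0$, the result of Moy--Prasad recalled in the introduction guarantees that $\pi$ admits a nondegenerate Moy--Prasad type $(x,X) \in \MP_r$, and any two such types are associate. Lemma~\ref{lem:main1new} upgrades associativity to stable associativity, so Corollary~\ref{cor:iotapi} produces a well-defined class $\iota_\pi \in \DLr$ depending only on $\pi$. Composing with the bijection $\DLr \xrightarrow{\sim} \TPr$ furnished by Lemma~\ref{lem:main2new} yields the $G^\vee$-conjugacy class $\varphi_{\iota_\pi} \in \TPr$. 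This is the desired map; I would define $\pi \mapsto \varphi_{\iota_\pi}$ and declare the first assertion proved.

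For the nontriviality claim under the hypothesis $r \in \Z_{(p)}$, the work splits into two independent inputs. First, Corollary~\ref{cor:iotapi} already records that $\iota_\pi \in \DLr$ is nontrivial whenever $\rho(\pi) = r \in \Z_{(p)}$, since this is precisely what nondegeneracy of a tame Moy--Prasad type translates into through Lemma~\ref{lem:nonzeronewnew}. Second, I need to verify that the bijection of Lemma~\ref{lem:main2new} carries nontrivial Deligne--Lusztig parameters to nontrivial tame restricted depth-$r$ parameters.

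To do this, pick representatives $E$, $\beta$, $X$ for $\iota_\pi$ as in Lemma~\ref{lem:iotatoMP}, so that $E$ is $(X,r,A)$-adapted and $(\beta, i_{\beta,E}^A(X)) \in \iota_\pi$. By definition $i_{\beta,E}^A(X)$ is the image in $\bar{\fa}^* \quo W$ of $\beta X \in \fa^*(E^u)_{=0} = \bar{\fa}^*$; since $\beta$ has valuation $r$ and acts by an isomorphism $\fa^*(E)_{=-r} \xrightarrow{\sim} \fa^*(E)_{=0}$, nontriviality of $\iota_\pi$ forces $X \neq 0$ in $\fa^*(E)_{=-r}$. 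The perfect pairing between $\fa(E)_{=r-c_{E/F}}$ and $\fa^*(E)_{=-r}$ induced by $\Lambda_E$ then forces the character $\chi^A_{X,E}$ of~\eqref{eq:characteronlieT} to be nontrivial. Local Langlands for the tame $E$-split torus $A$ (together with preservation of depth, already used in Section~\ref{sec:constructphi}) converts this into a nontrivial continuous homomorphism $\Group_E^{r-c_{E/F}}/\Group_E^{(r-c_{E/F})+} \to A^\vee$, which is exactly $\phi^A_{X,E}$ after the identifications $\Group_E^{r-c_{E/F}} = \Group_F^r$ and $\Group_E^{(r-c_{E/F})+} = \Group_F^{r+}$ coming from Lemma~\ref{lem:TFAE}. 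Thus $\varphi_{\iota_\pi}$ is nontrivial, finishing the proof.

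The only step requiring genuine care is the last one: tracing through the construction of Section~\ref{sec:restrictedtoralhoms} to confirm that the passage from $\iota$ to $\varphi_\iota$ is faithful in the strong sense that $\iota \neq 0 \Rightarrow \varphi_\iota \neq 1$. All the other assertions are formal combinations of Corollary~\ref{cor:iotapi} and Lemma~\ref{lem:main2new}, and I do not anticipate any obstacle beyond bookkeeping.
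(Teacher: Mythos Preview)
Your proof is correct and follows the same approach as the paper: combine Corollary~\ref{cor:iotapi} with Lemma~\ref{lem:main2new} to produce the map, and invoke Corollary~\ref{cor:iotapi} again for nontriviality when $r\in\Z_{(p)}$. The only difference is that you spell out why nontrivial $\iota$ yields nontrivial $\varphi_\iota$, whereas the paper leaves this implicit; note that a quicker justification is available: Lemma~\ref{lem:main2new} gives a \emph{bijection} $\DLr\to\TPr$, and the trivial class in $\DLr$ (represented by $X=0$) visibly maps to the trivial parameter, so injectivity forces nontrivial to nontrivial. Your direct verification via the pairing is also fine, though the phrase ``perfect pairing between $\fa(E)_{=r-c_{E/F}}$ and $\fa^*(E)_{=-r}$'' is a slight abuse (this is not the pairing of Section~\ref{sec:duallattice}, which pairs depth $s$ with depth $-s$); what you actually need, and what holds because $A$ is $E$-split, is that $X\neq 0$ in $\fa^*(E)_{=-r}$ implies the character $V\mapsto\Lambda_E(V,X)$ on $\fa(E)_{=r-c_{E/F}}$ is nontrivial.
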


\begin{proof}
The existence of the map follows from Corollary~\ref{cor:iotapi} and Lemma~\ref{lem:main2new}.
The final statement follows from Corollary~\ref{cor:iotapi}.
\end{proof}

\section{Some conjectures} 
\label{sec:conjnew}

Recall that $G$ is a connected reductive $F$-group that splits over $F^t$.  We may define ${}^LG:=(W_F/\Group_F^{0+})\ltimes G^{\vee}$. If $\varphi:W'_F :=  W_F \times \SL_2 \ra{}^LG$  is a Langlands parameter for $G(F)$, then we denote by $\Pi(\varphi)$ the associated $L$-packet of irreducible representations on $G(F)$.

\subsection{The main conjectures}
Thanks to Corollary~\ref{cor:iotapi} and Lemma~\ref{lem:main2new}, 
we can now generalize Conjecture \ref{conj:desi-pre} to the following desideratum for the local Langlands correspondence.

\begin{conjecture}\label{conj:desinew} Recall that $G$ splits over $F^t$. Suppose that an irreducible smooth representation $\pi$ of $G(F)$ has a nondegenerate  Moy-Prasad type $(x,X)$ of depth $r>0$. Then the restriction of a Langlands parameter for $\pi$ to $\Group_F^r$ is $G^{\vee}$-conjugate to $\varphi_{\iota_\pi} = \varphi_{\iota_x(X)}$. In particular, the restriction is tame (see Definition \ref{def:TRP}).
\end{conjecture}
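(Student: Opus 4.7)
The plan is to reduce Conjecture~\ref{conj:desinew} to the toral special case Conjecture~\ref{conj:desi-pre}, and then to verify that case inside any explicit construction of the local Langlands correspondence that exposes depth-$r$ structure. Given a nondegenerate Moy-Prasad type $(x,X)$ of depth $r>0$ for $\pi$, Corollary~\ref{cor:iotapi} says $\iota_\pi=\iota_x(X)$ depends only on $\pi$, and by Lemma~\ref{lem:iotatoMP} we may choose a representative $(\beta,i^A_{\beta,E}(X'))$ of $\iota_\pi$ with $X'\in\fa^*(E)_{=-r}$ for some $(X',r,A)$-adapted $E$; Corollary~\ref{cor:comparenew} then identifies $\varphi_{\iota_\pi}$ with $\phi^T_{X'}$ up to $G^\vee$-conjugacy for any $F^t$-split maximal torus $T$ witnessing torality. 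Thus it suffices to locate an explicit realization of $\pi$ involving a tame torus $T$ and a character $\theta$ whose depth-$r$ data matches $X'$ (possibly after replacing $(x,X)$ by an associate type).

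For Kaletha's construction for semisimple supercuspidal representations, this match is immediate: $\pi$ is attached to a tame elliptic pair $(T,\theta)$ of depth $r$, and $\varphi_\pi$ is built from $\varphi_\theta\colon W_F\to{}^LT$ composed with an $L$-embedding ${}^LT\hookrightarrow{}^LG$. The chi-data entering this $L$-embedding consists of characters of tamely ramified subfields of $\bar{F}$, so its cocycle contribution is trivial on the wild inertia $\Group_F^{0+}\supset\Group_F^r$. Consequently, up to $G^\vee$-conjugacy, $\varphi_\pi|_{\Group_F^r}$ is the composition of $\varphi_\theta|_{\Group_F^r}$ with the inclusion $T^\vee\hookrightarrow G^\vee$.

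The remaining ingredient is the toral case itself. Extract a Moy-Prasad type of $\pi$ from $(T,\theta)$ by choosing $x$ in the building of $T(E^t)$ and letting $X\in\ft^*(E)_{=-r}$ correspond via the Moy-Prasad isomorphism and $\Lambda_E$ to $\res_{T(E)_{\ge r-c_{E/F}}}\theta$; by construction, this restriction is exactly the character $\chi^T_{X,E}$ of Section~\ref{sec:constructphi}. Local Langlands for tame tori together with Yu's depth preservation (\cite{Yu09}, as cited in Section~\ref{sec:constructphi}) identifies $\varphi_\theta|_{\Group_E^{r-c_{E/F}}}$ with $\phi^T_{X,E}$, and Lemma~\ref{lem:TFAE} yields $\Group_E^{r-c_{E/F}}=\Group_F^r$, so the identification lives on $\Group_F^r$. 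Independence of $E$ (Corollary~\ref{cor:base}) and of $\Lambda_F$ (Remark~\ref{rem:indofaddchar}) completes the toral match, at which point the first paragraph yields Conjecture~\ref{conj:desinew} for $\pi$.

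The main obstacle is that Conjecture~\ref{conj:desinew} is a compatibility assertion with the local Langlands correspondence, which is not yet unconditionally defined for all irreducible smooth representations. The conjecture can therefore only be established case by case, whenever a construction of the LLC is sufficiently explicit to expose the relevant toral depth-$r$ data. The semisimple supercuspidal case treated in Section~\ref{sec:conjnew} is the cleanest application of the strategy above; extending it to nontempered or less regular representations will require controlling how depth-$r$ types are distributed across $L$-packets and how the construction of the corresponding parameter interacts with wild inertia twists coming from non-toral components — this is where the most subtle technical work is expected to lie.
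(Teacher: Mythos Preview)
The statement is a \emph{conjecture}, and the paper does not prove it; it only provides evidence in Section~\ref{subsec:evidence} by verifying it for Kaletha's construction of the local Langlands correspondence for semisimple supercuspidal representations. Your fourth paragraph correctly identifies this fundamental obstacle, so you are not actually claiming a proof of the conjecture, only a strategy and a verified special case.

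For that special case, your argument is essentially the same as the paper's. The paper extracts the depth-$r$ restriction $\theta|_{T(F)_{\ge r}}=\chi_{X,F}^T$ directly over $F$ (so $E=F$ and $c_{E/F}=0$), observes that Kaletha's parameter satisfies $\varphi|_{\Group_F^{0+}}=(T^\vee\hookrightarrow G^\vee)\circ\varphi_\theta|_{\Group_F^{0+}}$ up to $G^\vee$-conjugacy, and then invokes Corollary~\ref{cor:comparenew}. Your version routes through an auxiliary $(X,r,T)$-adapted extension $E$ and the identification $\Group_E^{r-c_{E/F}}=\Group_F^r$, which is slightly more elaborate than necessary here but harmless. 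Your remark that the chi-data contribution to the $L$-embedding is trivial on wild inertia is the same observation the paper makes when it says the restriction of Kaletha's parameter to $\Group_F^{0+}$ is simply $(T^\vee\hookrightarrow G^\vee)\circ\varphi_\theta|_{\Group_F^{0+}}$.

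One small caution about your first paragraph: the ``reduction'' of Conjecture~\ref{conj:desinew} to Conjecture~\ref{conj:desi-pre} is not a reduction in the usual sense. Not every nondegenerate Moy-Prasad type is $T$-toral for some tame $T$, so you cannot in general replace $(x,X)$ by a toral associate; rather, you need an explicit construction of $\pi$ (like Kaletha's) that \emph{supplies} such a toral realization. Your phrasing ``it suffices to locate an explicit realization'' acknowledges this, but the opening sentence suggests a cleaner logical reduction than actually exists.
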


\begin{remark} \label{rem:indofaddchar2} We note that Conjecture \ref{conj:desinew} contains Conjecture \ref{conj:desi-pre} thanks to Corollary~\ref{cor:comparenew}.  Also note that as with Conjecture~\ref{conj:desi-pre}, the  validity of Conjecture \ref{conj:desinew} is independent of the choice of $\Lambda_F$ (see Remark~\ref{rem:indofaddchar}). 
\end{remark}

From Lemma~\ref{lem:main2new} we know that a nontrivial tame restricted depth-$r$ parameter can be associated to a nontrivial depth-$r$ Deligne-Lusztig parameter.  
The construction in Section~\ref{constructionofDL} shows that any Moy-Prasad type that is associated to a nontrivial depth-$r$ Deligne-Lusztig parameter must be nondegenerate. 
 This suggests a conjecture in the opposite direction of Conjecture~\ref{conj:desinew}.

 \begin{conjecture}\label{conj:desi2}  
Recall that $G$ splits over $F^t$.
Suppose that $\varphi:W_F' \ra{}^LG$  is a Langlands parameter and
$r>0$.   If  the restriction $\varphi|_{\Group_F^r}$ of $\varphi$ to $\Group_F^r$ is a nontrivial tame restricted depth-$r$ Langlands parameter for $G(F)$, then $\varphi_{\iota_x(X)}$ is $G^\vee$-conjugate to 
$\varphi|_{\Group_F^r}$ for every  Moy-Prasad type $(x,X)$ of depth $r$ occurring in a representation  in $\Pi(\varphi)$.
\end{conjecture}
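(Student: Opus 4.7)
The plan is to deduce Conjecture~\ref{conj:desi2} from Conjecture~\ref{conj:desinew} combined with a depth-preservation property of the local Langlands correspondence. By Lemma~\ref{lem:main2new}, the $G^\vee$-conjugacy class of $\varphi|_{\Group_F^r}$ corresponds to a unique nontrivial $\iota \in \DLr$, so the conclusion that $\varphi_{\iota_x(X)}$ is $G^\vee$-conjugate to $\varphi|_{\Group_F^r}$ becomes the equality $\iota_x(X) = \iota$ in $\DLr$.

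The argument proceeds in two steps. First, the nontriviality of $\varphi|_{\Group_F^r}$ together with the triviality of $\varphi$ on $\Group_F^{r+}$ places the last upper-numbering break of $\varphi$ at $r$. Assuming a depth-preservation compatibility for the LLC --- that $\rho(\pi)$ equals the depth of $\varphi$ for every $\pi \in \Pi(\varphi)$ --- this gives $\rho(\pi) = r$. The Moy-Prasad existence theorem then furnishes a nondegenerate Moy-Prasad type $(y, Y)$ of $\pi$ of depth $r$, with all such types mutually associate; Corollary~\ref{cor:iotapi} packages this into a well-defined $\iota_\pi := \iota_y(Y) \in \DLr$. Applying Conjecture~\ref{conj:desinew} to the pair $(\pi, (y, Y))$ gives that $\varphi|_{\Group_F^r}$ is $G^\vee$-conjugate to $\varphi_{\iota_\pi}$, and combining with the defining property of $\iota$ and the injectivity of $\iota \mapsto \varphi_\iota$ in Lemma~\ref{lem:main2new} forces $\iota_\pi = \iota$.

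Second, for the given Moy-Prasad type $(x, X)$ of $\pi$ at depth $r$, assumed nondegenerate, the associativity portion of the Moy-Prasad theorem states that $(x, X)$ and $(y, Y)$ are associate, whence Lemma~\ref{lem:main1new} yields $\iota_x(X) = \iota_y(Y) = \iota_\pi = \iota$, completing the proof.

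The main obstacle is the depth-preservation step: while it is widely expected that the LLC preserves depth in the sense above, this is open in general. The paper establishes it in the case of Kaletha's construction for semisimple supercuspidal representations, which is presumably why Conjecture~\ref{conj:desi2} can be verified in parallel with Conjecture~\ref{conj:desinew} there. A related subtlety is that if $(x, X)$ were taken to be degenerate then $\iota_x(X)$ would be trivial by construction while $\iota$ is nontrivial; hence Conjecture~\ref{conj:desi2} must either be read as implicitly restricting to nondegenerate Moy-Prasad types, or else be supplemented by a statement that no $\pi \in \Pi(\varphi)$ admits a degenerate Moy-Prasad type of depth $r$ when $\varphi|_{\Group_F^r}$ is nontrivial.
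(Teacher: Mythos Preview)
Your overall structure is right, but you introduce an unnecessary hypothesis and then flag it as the main obstacle. You assume full depth preservation $\rho(\pi)=\rho(\varphi)$ for every $\pi\in\Pi(\varphi)$; the paper shows this is not needed. The paper's argument only uses the inequality $\rho(\pi)\ge\rho(\varphi)=r$, and this inequality is itself a \emph{consequence} of Conjecture~\ref{conj:desinew}: applying Conjecture~\ref{conj:desinew} at depth $\rho(\pi)$ shows $\varphi|_{\Group_F^{\rho(\pi)}}$ is trivial on $\Group_F^{\rho(\pi)+}$, hence $\rho(\varphi)\le\rho(\pi)$ (this is Lemma~\ref{lem:45to50}). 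So no extra depth-preservation input is required---Conjecture~\ref{conj:desinew} alone implies Conjecture~\ref{conj:desi2}.

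Once you have $\rho(\pi)\ge r$, the paper splits into two cases. If $\rho(\pi)>r$, then $\pi$ has no nondegenerate Moy--Prasad type of depth $r$ (a nondegenerate type of depth $r$ would force $\rho(\pi)=r$), so the conclusion of Conjecture~\ref{conj:desi2} is vacuous. If $\rho(\pi)=r$, your second step applies verbatim: any nondegenerate $(x,X)$ of depth $r$ in $\pi$ is associate to the type furnished by Moy--Prasad, so $\iota_x(X)=\iota_\pi$, and Conjecture~\ref{conj:desinew} gives $\varphi_{\iota_x(X)}\sim_{G^\vee}\varphi|_{\Group_F^r}$. Your observation about degenerate types is on point: the paper's proof (via the vacuity argument) tacitly reads Conjecture~\ref{conj:desi2} as a statement about nondegenerate types, consistent with the remark after Definition~\ref{def:DL} that $\iota_x(X)$ nontrivial forces $(x,X)$ nondegenerate.
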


\subsection{Depth and the main conjectures}
On the representation theory side the {depth}, $\rho(\pi)$, of an irreducible smooth representation $(\pi,V)$ of $G(F)$ measures the first occurrence of fixed vectors with respect to the Moy-Prasad subgroups, and it is defined~\cite[\S5.2]{debacker:someapplicationsgroup}  by
\[ \rho(\pi) = \min \{s \in \Q_{\geq 0} \, | \, \text{there exists
$y \in \cB(G(F))$ such that $V^{G_{y>s}} \neq \{0\}$}  \}.\]

\begin{remark}
    If $(\pi,V)$ admits a nondegenerate Moy-Prasad type $(x,X)$, then from~\cite[Thm. 5.2]{MP94}, \cite[Thm. 3.5]{MP96} we have $(x,X) \in \MP_{\!\rho(\pi)}$. 
\end{remark}

On the Galois side the {depth}, $\rho(\varphi)$, of a Langlands parameter $\varphi:W_F' \ra{}^LG$ measures, with respect to the upper-numbering filtration,  the smallest quotient  
through which $\varphi$ factors, and it is defined by
\[ \rho(\varphi) = \min  \{s \in \Q_{\geq 0} \, | \, \text{the restriction of $\varphi$ to ${\Group_F^{s+}}$ is trivial}  \}.\]
If $G$ is not assumed to split over a tame extension, then this definition needs to be modified (see, for example,~\cite[Definition~2.12 and Lemma~2.14]{AP22}).

\begin{remark}  If $\rho(\varphi)>0$, then  $ \varphi|_{\Group_F^{\rho(\varphi)}}$ is nontrivial.
\end{remark}

Suppose $\varphi$ is a positive depth Langlands parameter and $\pi \in \Pi(\varphi)$.  The relationship between $\rho(\pi)$ and $\rho(\varphi)$ for groups that split over $F^t$ has been explored in many contexts.
For example, if $G$ is an inner form of $\mathrm{GL}_n$, then from~\cite[Theorem~2.9]{ABPS16} we have  $ \rho(\varphi) = \rho(\pi)$. This equality of depths also holds for $\GSP_4$ when $p$ is odd~\cite{Gan15} and unitary groups when $p$ is sufficiently large~\cite{Oi21,Oi23}.
If $G$ is an
inner form of $\mathrm{SL}_n$, then from~\cite[Corollary~3.4 and Theorem~3.8]{ABPS16} we have $\rho(\varphi) \leq \rho(\pi)$ with equality if $\varphi[\Group_F^{0+}]$ lies in a maximal torus. 
Building on~\cite{GV17}, it is shown in~\cite{Oi23} that 
if $G$ is a quasi-split symplectic or special orthogonal group and $p$ is sufficiently large, then $\rho(\pi) \leq \rho(\varphi)$ with equality  achieved for at least one $\pi \in \Pi(\varphi)$.       Finally, there are examples where the inequality is strict: (a) if $F$ has characteristic two, then for inner forms of $\mathrm{SL}_2$ we  have $ \rho(\varphi) < \rho(\pi)$ if and only if $\varphi[\Group_F^{0+}]$ does not lie in a maximal torus~\cite{AMPS17};  (b) for $\mathrm{SL}_2$ over $\Q_2$, Mark Reeder constructed an example where $\rho(\varphi) = 1/3 < 1/2 = \rho(\pi)$ (see~\cite[Example 3.5]{ABPS16}); and (c) there are examples where the inequality is strict for the  groups $\SU_p(\Q_p)$ and $G_2(\Q_2)$~\cite[Sections~7.4 and 7.5]{RY14}.
These examples suggest that the following guess may be warranted.

\begin{conjecture} \label{conj:desi2newnew} Recall that $G$ splits over $F^t$.
Suppose $\varphi$ is a positive depth Langlands parameter for $G(F)$.  For all $\pi \in \Pi(\varphi)$ we have $\rho(\varphi)\le\rho(\pi)$. If moreover that $p$ does not divide the order of the absolute Weyl group of $G$, then $\rho(\varphi)=\rho(\pi)$.
\end{conjecture}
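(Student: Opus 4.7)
The plan is to deduce both inequalities from the main construction of this paper combined with Conjecture~\ref{conj:desinew}, which identifies $\varphi|_{\Group_F^{\rho(\pi)}}$ up to $G^\vee$-conjugation with the explicit restricted parameter $\varphi_{\iota_\pi}$ built in Section~\ref{sec:restrictedtoralhoms}.

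First I would establish the inequality $\rho(\varphi)\leq\rho(\pi)$ for every $\pi\in\Pi(\varphi)$. Fix such a $\pi$ and set $r=\rho(\pi)>0$. By~\cite[Thm.~5.2]{MP94} and Corollary~\ref{cor:iotapi}, the representation $\pi$ carries a well-defined depth-$r$ Deligne-Lusztig parameter $\iota_\pi\in\DLr$ arising from any nondegenerate Moy-Prasad type $(x,X)\in\MP_r$ of $\pi$. The restricted parameter $\varphi_{\iota_\pi}$ is by Definition~\ref{def:TRP} a tame restricted depth-$r$ parameter and is therefore trivial on $\Group_F^{r+}$. Granting Conjecture~\ref{conj:desinew}, the restriction $\varphi|_{\Group_F^{r+}}$ is then trivial, which forces $\rho(\varphi)\leq r$ directly from the definition.

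For the reverse inequality when $p\nmid|W|$, I would invoke Remark~\ref{rmk:Jessica}: any positive-depth irreducible $\pi$ admits a nondegenerate Moy-Prasad type, and the hypothesis on $p$ then forces $r=\rho(\pi)\in\Z_{(p)}$. The nontriviality assertion in Theorem~\ref{thm:mainthm}, which traces back to Lemma~\ref{lem:nonzeronewnew} and the bijection of Lemma~\ref{lem:main2new}, guarantees that $\varphi_{\iota_\pi}$ is nontrivial. Under Conjecture~\ref{conj:desinew} this transfers to nontriviality of $\varphi|_{\Group_F^r}$. Since $\Group_F^r\subset\Group_F^{s+}$ for every $s<r$, this rules out $\rho(\varphi)<r$ and gives $\rho(\varphi)\geq r$; combining with the first step yields the desired equality.

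The principal obstacle is Conjecture~\ref{conj:desinew} itself: the whole argument is conditional on identifying $\varphi|_{\Group_F^{\rho(\pi)}}$ with the explicit parameter $\varphi_{\iota_\pi}$ for whichever local Langlands correspondence is in use. The authors verify this compatibility for Kaletha's construction of semisimple supercuspidals in Section~\ref{sec:conjnew}, and analogous verifications would be required for $\mathrm{GL}_n$, inner forms of $\SL_n$, $\GSP_4$, and the classical groups, presumably by comparing the construction of Section~\ref{sec:restrictedtoralhoms} with the tame-parameter data extracted from each correspondence. A secondary caveat is that when $r\notin\Z_{(p)}$ the nontriviality half of Theorem~\ref{thm:mainthm} is unavailable, so even granting Conjecture~\ref{conj:desinew} one obtains only the inequality; the strict inequalities documented in the Reeder, $\SU_p$, and $G_2$ examples confirm that this loss is unavoidable in small residue characteristic, and explain why the equality half of the conjecture genuinely requires the hypothesis $p\nmid|W|$.
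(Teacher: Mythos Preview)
Your proposal is correct and follows essentially the same route as the paper's Lemma~\ref{lem:45to50}: both arguments are conditional on Conjecture~\ref{conj:desinew}, deduce $\rho(\varphi)\le\rho(\pi)$ from the fact that $\varphi_{\iota_\pi}$ is trivial on $\Group_F^{\rho(\pi)+}$, and obtain the reverse inequality under $p\nmid|W|$ by combining Remark~\ref{rmk:Jessica} with the nontriviality statement in Lemma~\ref{lem:pfinpos}. Your write-up simply unpacks the references a bit more (citing Lemma~\ref{lem:nonzeronewnew} and Lemma~\ref{lem:main2new} explicitly) and spells out the filtration inclusion $\Group_F^r\subset\Group_F^{s+}$ for $s<r$, but the logical skeleton is identical.
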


\begin{lemma} \label{lem:45to50}  If Conjecture \ref{conj:desinew} is true, then Conjecture \ref{conj:desi2newnew} is true.
\end{lemma}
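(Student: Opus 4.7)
The plan is to deduce Conjecture~\ref{conj:desi2newnew} directly from Conjecture~\ref{conj:desinew}. Fix a positive depth Langlands parameter $\varphi$ for $G(F)$ and some $\pi\in\Pi(\varphi)$; the goal is to produce $\rho(\varphi)\le\rho(\pi)$, and, under the Weyl-group hypothesis, the reverse inequality.

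First I would treat the case $\rho(\pi)>0$. By Moy--Prasad theory, $\pi$ admits a nondegenerate Moy--Prasad type $(x,X)$ of depth $r:=\rho(\pi)$. Applying Conjecture~\ref{conj:desinew} to $\pi$ and the parameter $\varphi$, we learn that $\varphi|_{\Group_F^r}$ is $G^\vee$-conjugate to the tame restricted depth-$r$ parameter $\varphi_{\iota_\pi}$. By the very definition of a tame restricted depth-$r$ parameter, $\varphi_{\iota_\pi}$ is trivial on $\Group_F^{r+}$, and since triviality on a subgroup is preserved under $G^\vee$-conjugation, $\varphi|_{\Group_F^{r+}}$ is also trivial. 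The definition of $\rho(\varphi)$ then yields $\rho(\varphi)\le r=\rho(\pi)$.

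Next, assume $p$ does not divide the order of the absolute Weyl group of $G$. The previous paragraph combined with $\rho(\varphi)>0$ forces $\rho(\pi)\ge\rho(\varphi)>0$, so $\pi$ is of positive depth; by Remark~\ref{rmk:Jessica} every nondegenerate Moy--Prasad type of $\pi$ has depth in $\Z_{(p)}$, and so $r:=\rho(\pi)\in\Z_{(p)}\cap\Q_{>0}$. Corollary~\ref{cor:iotapi} then tells us that $\iota_\pi$ is nontrivial, and Lemma~\ref{lem:main2new} promotes this to nontriviality of $\varphi_{\iota_\pi}$. Since $\varphi|_{\Group_F^r}$ is $G^\vee$-conjugate to $\varphi_{\iota_\pi}$ it is itself nontrivial. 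By the definition of $\rho(\varphi)$, nontriviality of $\varphi|_{\Group_F^r}$ forces $\rho(\varphi)\ge r$ (for small enough $\varepsilon>0$, $\Group_F^r\subset\Group_F^{(r-\varepsilon)+}$). Combined with the first step this gives $\rho(\varphi)=\rho(\pi)$.

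The main obstacle is the depth zero case: when some $\pi\in\Pi(\varphi)$ has $\rho(\pi)=0$ while $\rho(\varphi)>0$, Conjecture~\ref{conj:desinew} offers no direct information, since its hypothesis requires a nondegenerate Moy--Prasad type of \emph{positive} depth. Eliminating this case from Conjecture~\ref{conj:desinew} alone appears infeasible; the honest plan would either restrict to packets whose members all have positive depth, or invoke as a supplementary input the expected compatibility of the LLC under consideration with the depth zero/positive depth dichotomy. Once depth zero representations are excluded from $\Pi(\varphi)$, the first two paragraphs above complete the argument.
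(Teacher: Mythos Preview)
Your argument for the case $\rho(\pi)>0$ is exactly the paper's proof: apply Conjecture~\ref{conj:desinew} at $r=\rho(\pi)$ to conclude $\varphi|_{\Group_F^{r+}}$ is trivial, hence $\rho(\varphi)\le\rho(\pi)$; for the equality under the Weyl-group hypothesis, the paper likewise invokes Remark~\ref{rmk:Jessica} together with Lemma~\ref{lem:pfinpos} (which packages Corollary~\ref{cor:iotapi} and Lemma~\ref{lem:main2new}), so your route through those ingredients is the same.

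The depth-zero obstacle you flag is real, and you are being more scrupulous than the paper. The paper's proof simply writes ``From Conjecture~\ref{conj:desinew} we have that $\varphi|_{\Group_F^{\rho(\pi)}}$ is given by the construction in \S\ref{sec:C2Bnew}'', which tacitly assumes $\rho(\pi)>0$ so that Conjecture~\ref{conj:desinew} applies; it does not separately rule out the possibility that a positive-depth $\varphi$ has a depth-zero member in $\Pi(\varphi)$. So your concern is not a defect in your proposal relative to the paper---it is a genuine lacuna that the paper's own proof shares and leaves implicit.
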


\begin{proof}   Suppose $\varphi$ is a positive depth Langlands parameter and $\pi \in \Pi(\varphi)$. 
From Conjecture~\ref{conj:desinew} we have that $\varphi|_{\Group_F^{\rho(\pi)}}$ is given by the construction in \S\ref{sec:C2Bnew}.  In particular, $\varphi|_{\Group_F^{\rho(\pi)}}$  is  trivial on $\Group_F^{\rho(\pi)+}$. Hence $\rho(\varphi)\le\rho(\pi)$. The last assertion follows from Remark \ref{rmk:Jessica} and Lemma \ref{lem:pfinpos}.
\end{proof}

\begin{corollary} If Conjecture \ref{conj:desinew} is true, then Conjecture \ref{conj:desi2} is true.
\end{corollary}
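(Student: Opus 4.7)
The plan is to chase depths: first show $\rho(\pi)=r$ for every $\pi\in\Pi(\varphi)$, then force every Moy-Prasad type of depth $r$ occurring in such a $\pi$ to be nondegenerate, and finally invoke Conjecture \ref{conj:desinew} directly.

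First I would observe that since $\varphi|_{\Group_F^r}$ is a restricted depth-$r$ parameter it is trivial on $\Group_F^{r+}$, and by the nontriviality hypothesis it is not trivial on $\Group_F^r$; consequently $\rho(\varphi)=r$. Pick any $\pi\in\Pi(\varphi)$ and any Moy-Prasad type $(x,X)$ of depth $r$ occurring in $\pi$. Since a vector on which $G(F)_{x\ge r}$ acts by $\rho_X$ is automatically fixed by $G(F)_{x>r}$ (as $\rho_X$ is trivial there), we get $\rho(\pi)\le r$. On the other hand, Lemma \ref{lem:45to50}, whose hypothesis is exactly the Conjecture \ref{conj:desinew} we are assuming, gives $r=\rho(\varphi)\le\rho(\pi)$. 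Hence $\rho(\pi)=r$.

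The main obstacle will be the next step: showing that $(x,X)$ must be nondegenerate. My plan is to argue by contradiction using the Moy-Prasad depth-reduction argument from the proof of \cite[Thm.~5.2]{MP94}. If $X+\fg^*(F)_{x>-r}$ contained a nilpotent element, that argument would produce a Moy-Prasad type of depth strictly less than $r$ still occurring in $\pi$, contradicting $\rho(\pi)=r$. This step is the only piece of the deduction that is not purely formal; the MP depth-reduction is standard but must be translated carefully into the dual Lie algebra setup used here. As a sanity check, in the tame-depth case ($r\in\Z_{(p)}$) nondegeneracy is forced for free: by Lemma \ref{lem:nonzeronewnew}, a degenerate $(x,X)$ would have trivial $\iota_x(X)$, so by Lemma \ref{lem:main2new} the parameter $\varphi_{\iota_x(X)}$ would be trivial, yet $\varphi|_{\Group_F^r}$ is assumed nontrivial.

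With the nondegeneracy of $(x,X)$ in hand, Conjecture \ref{conj:desinew} applies directly to the pair $(\pi,(x,X))$: the restriction to $\Group_F^r$ of any Langlands parameter for $\pi$ is $G^{\vee}$-conjugate to $\varphi_{\iota_\pi}=\varphi_{\iota_x(X)}$. Since $\pi\in\Pi(\varphi)$, that Langlands parameter is $\varphi$ itself, so $\varphi|_{\Group_F^r}$ is $G^{\vee}$-conjugate to $\varphi_{\iota_x(X)}$. As both $\pi\in\Pi(\varphi)$ and the depth-$r$ Moy-Prasad type $(x,X)$ in $\pi$ were arbitrary, this is exactly the statement of Conjecture \ref{conj:desi2}.
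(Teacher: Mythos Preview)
Your argument is correct and follows the same route as the paper: derive $\rho(\varphi)=r$, invoke Lemma~\ref{lem:45to50} to get $\rho(\pi)\ge r$, combine with $\rho(\pi)\le r$ (forced by the existence of a depth-$r$ type) to pin down $\rho(\pi)=r$, and then apply Conjecture~\ref{conj:desinew}. You are in fact more explicit than the paper on one point: the paper simply picks ``a nondegenerate Moy-Prasad type of depth $r$'' without remarking that \emph{every} depth-$r$ type in $\pi$ is automatically nondegenerate once $\rho(\pi)=r$, whereas you supply the Moy-Prasad depth-reduction argument for this.

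One small caveat: your ``sanity check'' in the tame-depth case is circular. Knowing that $\varphi_{\iota_x(X)}$ is trivial only yields a contradiction with the nontriviality of $\varphi|_{\Group_F^r}$ if you already know the two are $G^\vee$-conjugate---but that is precisely the conclusion you are trying to establish, and Conjecture~\ref{conj:desinew} does not apply to a degenerate $(x,X)$. The main argument via depth reduction is unaffected, so just drop that aside.
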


\begin{proof}
Suppose $\varphi$ is a positive depth Langlands parameter and $\pi \in \Pi(\varphi)$. 
Let $r=\rho(\varphi)$. Suppose $\varphi|_{\Group_F^r}$ is a nontrivial tame restricted depth-$r$ Langlands parameter for $G(F)$.   The depth of $\varphi$ is thus $r$, and from Lemma~\ref{lem:45to50} we have  $\rho(\pi) \geq \rho(\varphi) = r$ for all $\pi \in \Pi(\varphi)$. If $\rho(\pi)>r$, then there are no nondegenerate Moy-Prasad types of depth $r$ in $\pi$, and so the conclusion of Conjecture~\ref{conj:desi2}  is vacuously true.
If $\rho(\pi)=r$, then let  $(x,X)$ be a nondegenerate Moy-Prasad type of depth $r$ in $\pi$. 
Conjecture~\ref{conj:desinew}
says that the restriction of $\varphi$ to $\Group_F^{r}$ is $G^{\vee}$-conjugate to $\varphi_{\iota_\pi} = \varphi_{\iota_x(X)}$, and the conclusion of   Conjecture \ref{conj:desi2} therefore holds.
\end{proof}

\begin{remark}\label{rmk:counter}
It is natural to ask if Conjecture \ref{conj:desi2} could be upgraded to ``If  the restriction $\varphi|_{\Group_F^{r}}$ of $\varphi$ to $\Group_F^{r}$  is a nontrivial  tame restricted depth-$r$ Langlands parameter for $G(F)$, then any $\pi\in\Pi(\varphi)$ has a nondegenerate Moy-Prasad type $(x,X)$ of depth $r$ such that $\varphi_{\iota_x(X)}$ is $G^\vee$-conjugate to 
$\varphi|_{\Group_F^r}$.'' This, however, is false as we now show.

 Let $p$ be any prime, $G=\mathrm{SL}_p$, and $F=\F_p((t))$. (A similar construction should be possible for any $p$-adic field.) Let $S=\{n\in\Z\;|\;n\ge 3\}$ if $p>2$ and $S=\{2\}\sqcup\{n\in\Z\;|\;n\ge 4\}$ if $p=2$. Let $U$ be the subgroup of $F^{\times}$ generated by
\[
U=\langle t,\F_p^{\times},(1+t^n)_{n\in S}\rangle.
\]
By local class field theory, $U$ corresponds to a totally ramified abelian extension $E/F$ with $\Gal(E/F)\cong F^{\times}/U$, which is a product of two cyclic groups of order $p$. We have a particularly interesting embedding $\varphi:\Gal(E/F)\ira\mathrm{PGL}_p(\C)$ whose image is the abelian group of order $p^2$ generated by $\mathrm{diag}(1,\zeta_p,\zeta_p^2,...,\zeta_p^{p-1})$ and $\matr{0&1&0&...&0\\0&0&1&...&0\\&&&...&\\0&0&0&...&1\\1&0&0&...&0}$. In particular $\varphi$ is trivial on the commutator of $W_F$  while $\im(\varphi)$ is not contained in a maximal torus.

Write $d=2$ if $p>2$ and $d=3$ if $p=2$. Then $\varphi$ has depth $d$.  However, the upper numbering subgroup $\Group(E/F)^d$ is given by the subgroup of $F^{\times}/U$ of depth $d$, which is a cyclic group of order $p$. In particular, $\varphi(\Group(E/F)^d)$ { is contained in a maximal torus} and thus $\varphi|_{\Group_F^d}$ is a tame restricted depth-$d$ parameter.

If every $\pi \in \Pi(\varphi)$ has a nondegenerate Moy-Prasad type $(x,X)$ of depth $r$, then $\rho(\pi) = \rho(\varphi) = r$ for all $\pi \in \Pi(\varphi)$. This seems very unlikely because of \cite[Theorem~1.1]{AMPS17}, which confirms that this is not the case when $p=2$.

To take this a bit further, let $U'$ be the subgroup generated by $U$ and $1+t$. Then $F^{\times}/U'$ is cyclic of order $p$. It corresponds to some intermediate extension $F\subset E'\subset E$, with $\Gal(E'/F)=\Group(E'/F)^d$, and the natural surjection $\Group(E/F)^d\sra\Gal(E'/F)$ is an isomorphism. One has a parameter $\varphi':\Gal(E'/F)\ira\mathrm{PGL}_p(\C)$ with image contained in a maximal torus.  In this case, $\varphi$ and $\varphi'$ both have depth $d$ and have the same restriction to $\Group_F^d$, but we expect that $\rho(\pi)>d$ for all $\pi \in \Pi(\varphi)$ and $\rho(\pi')=d$ for all $\pi' \in \Pi(\varphi')$; the latter is again confirmed when $p=2$ by \cite[Theorem~1.1]{AMPS17}.
\end{remark}

\begin{remark} Conjecture~\ref{conj:desinew} and its consequences do not rule out the following scenario. There could exist a positive depth Langlands parameter $\varphi$, possibly with $\varphi|_{\Group_F^{\rho(\varphi)}}$ tame, such that (a) at least two depths occur among the representations in $\Pi(\varphi)$ and (b) for all $ \pi \in \Pi(\varphi)$ we have $\rho(\pi) \geq \rho(\varphi)$ with $\rho(\pi) > \rho(\varphi)$ if and only if $\varphi_{\iota_\pi}$ is trivial.
 If Conjecture~\ref{conj:desinew} holds, then from Corollary~\ref{cor:iotapi} we know that we can only have $\rho(\pi) > \rho(\varphi)$
when  $\rho(\pi)\not\in\Z_{(p)}$.
\end{remark}

\begin{remark}
    Conjecture~\ref{conj:desi2newnew} is known to be false if we remove the assumption that $G$ splits over $F^t$.  This failure was discussed for tori in Remark~\ref{rem:wildtori}.   For groups of the form $\Res_{E/F}H$ where $E/F$ is wildly ramified and $H$ is an inner form of $\mathrm{GL}_n$ over $E$, we have  $\rho(\pi) < \rho(\varphi)$ for all    positive depth irreducible representations $\pi \in \Pi(\varphi)$~\cite[Corollary~1.6]{AP22}.   When $p$ is sufficiently large, the same phenomenon happens for  $H$ being  a quasi-split classical group or a unitary group~\cite[Corollary~1.6]{AP22}.
\end{remark}

\subsection{Some evidence}\label{subsec:evidence}

We end this section by providing some evidence for Conjecture~\ref{conj:desinew}, and hence for Conjectures~\ref{conj:desi2} and~\ref{conj:desi2newnew}.

\begin{lemma}
Suppose that
\begin{enumerate}
    \item $\operatorname{char}(F)=0$.
    \item $p>2$ and is not bad \cite[Def. I.4.1]{SS70}
    for $G$.
    \item $p$ does not divide the order of $\pi_1(G_{der})$ nor divide the order of $\pi_0(Z(G))$.
    \item $G$ splits over $F^t$.
\end{enumerate}
Then Conjecture \ref{conj:desinew}
is true for Kaletha's construction of a local Langlands correspondence \cite{Kal19,Kal19b} for semisimple (a.k.a. non-singular) supercuspidal representations.
\end{lemma}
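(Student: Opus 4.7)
The plan is to identify a nondegenerate Moy-Prasad type inside Kaletha's explicit construction, apply Corollary~\ref{cor:comparenew} to compute $\varphi_{\iota_\pi}$, and then match this with the restriction of Kaletha's parameter via local Langlands for tame tori.

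Let $\pi = \pi_{(T,\theta)}$ be the non-singular supercuspidal attached by Kaletha \cite{Kal19,Kal19b} to a tame elliptic pair $(T,\theta)$, where $T$ is a tame elliptic maximal $F$-torus of $G$ and $\theta: T(F)\to\Cc$ is a non-singular character. Under the stated hypotheses on $p$ and $G$, we have $\rho(\pi) = \rho(\theta) = r$, and by unwinding the Yu/Kim--Yu construction from the Howe factorization of $\theta$ one extracts a nondegenerate Moy-Prasad type $(x,X)$ of depth $r$ for $\pi$: the point $x\in \cB(G,F)$ is the image of the (unique) point of the building of $T$, and $X \in \ft^*(F)_{x=-r}$ is the element which, via the Moy-Prasad isomorphism, corresponds to the restriction of $\theta$ to $T(F)_{\geq r}$. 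By construction $(x,X)$ is $T$-toral in the sense of Section~\ref{sec:Ttoraltype}. Applying Corollary~\ref{cor:comparenew} then yields that $\varphi_{\iota_x(X)}$ is $G^{\vee}$-conjugate to $\phi^T_{X}$, so it suffices to show that $\varphi_\pi|_{\Group_F^r}$ is $G^{\vee}$-conjugate to $\phi^T_X$.

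Next I would invoke the explicit form of $\varphi_\pi$ from \cite{Kal19,Kal19b}: up to $G^{\vee}$-conjugation, $\varphi_\pi$ equals the composition $W_F' \xrightarrow{\varphi_{\theta\epsilon}} {}^LT \to {}^LG$, where $\varphi_{\theta\epsilon}$ is the Langlands parameter of $\theta\cdot\epsilon$ for the torus $T$ (via local Langlands for tori) and $\epsilon: T(F) \to \Cc$ is Kaletha's rectifying character, assembled from the chosen $\chi$-data and associated $\epsilon$-factors. Because the $\chi$-data in Kaletha's construction are chosen to be tamely ramified, $\epsilon$ is trivial on $T(F)_{0+}$; by depth preservation for local Langlands for tame tori \cite[\S7.10]{Yu09}, the corresponding parameter $\varphi_\epsilon$ is trivial on $\Group_F^{0+}$, hence on $\Group_F^r$. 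Therefore $\varphi_\pi|_{\Group_F^r}$ agrees with $\varphi_\theta|_{\Group_F^r}$ composed with $T^{\vee} \hookrightarrow G^{\vee}$, and by the very definition of $\phi^T_X$ in Section~\ref{sec:constructphi} this latter homomorphism is exactly $\phi^T_X$.

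The main obstacle is verifying two ingredients precisely. First, that the deepest layer of the Yu/Kim--Yu construction applied to Kaletha's Howe factorization indeed produces the Moy-Prasad type $(x,X)$ described above: this is essentially a tracking problem for the correspondence between $\theta|_{T(F)_{\geq r}}$ and the element $X$ through the generic character datum. Second, that the rectifying character $\epsilon$ is of depth zero in the non-singular setting; this follows from the tameness of Kaletha's $\chi$-data, but must be checked uniformly across the layers of the Howe factorization (the assumptions that $p$ is not bad for $G$ and does not divide $|\pi_1(G_{\mathrm{der}})||\pi_0(Z(G))|$ are exactly what is needed to ensure that such tame $\chi$-data can be chosen). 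Both verifications are implicit in \cite{Kal19,Kal19b} under the stated hypotheses, so the proof is primarily a careful bookkeeping exercise matching Kaletha's normalizations to the construction of Section~\ref{sec:constructphi}.
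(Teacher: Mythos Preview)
Your proposal is correct and follows essentially the same route as the paper's proof: identify the $T$-toral Moy-Prasad type $(x,X)$ arising from $\theta|_{T(F)_{\ge r}}$, apply Corollary~\ref{cor:comparenew} to get $\varphi_{\iota_x(X)}\sim\phi_X^T$, and then verify that Kaletha's parameter restricted to $\Group_F^r$ factors through $\varphi_\theta|_{\Group_F^r}$ via $T^\vee\hookrightarrow G^\vee$. The only difference is packaging: where you explicitly invoke the rectifying character $\epsilon$ and argue it has depth zero (hence is invisible on $\Group_F^{0+}$), the paper simply cites that Kaletha's construction already gives $\varphi|_{\Group_F^{0+}}=(T^\vee\hookrightarrow G^\vee)\circ\varphi_\theta|_{\Group_F^{0+}}$ up to conjugacy, bypassing any discussion of $\epsilon$.
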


\begin{proof} Our conjectures are vacuous in the depth-zero case, so we focus on the positive depth case. A semisimple supercuspidal representation is by definition a tame supercuspidal representation constructed by Yu \cite{Yu01} with certain data specified by Kaletha through the work of Hakim-Murnaghan \cite{HM08}. The data consist of a maximal $F$-torus $T\subset G$ that is anisotropic mod $Z(G)$, a character $\theta:T(F)\ra\Cc$, and its Howe factorization $\theta=\prod_{i=-1}^d\theta_i$ for some $d\in\Z_{>0}$ \cite[\S3.6, Prop. 3.7.8]{Kal19}. Here each $\theta_i$ has depth $r_i$ with $0<r_0<r_1<...<r_{d-1}\le r_d$. 
In particular $\theta|_{T(F)_{\ge r_d}}$ is trivial and thus $\theta|_{T(F)_{\ge r_d}}:T(F)_{=r_d}\ra\Cc$ is given by $\psi_F$ and an element $X\in\ft^*(F)_{=-r_d}$, i.e. $\theta|_{T(F)_{\ge r}}=\chi_{X,F}^T$ where the latter is as in the construction at \eqref{eq:characteronlieT} with $E=F$ and $r=r_d$.

On the representation theory side, 
since $T$ is anisotropic mod $Z(G)$, 
the image of $\cB(T,F)=\{x\}$ is a single point in $\cB(G,F)$. The canonical embedding $\ft^*(F)_{=-r}\ira\fg^*(F)_{x=-r}$ gives a $1$-dimensional representation $\rho_X:G(F)_{x=r}\ra\Cc$ given by $\rho_X(W)=\psi_F(\log(W),X)$ as in \eqref{eq:MPtypeongroup}. The supercuspidal representation is then constructed as  $\pi_d=\ind_{K^d}^{G(F)}\rho_d$, the compact induction of an irreducible representation $\rho_d$ of an open subgroup $K^d\subset G(F)$ such that $K^d\supset G(F)_{x\ge r}$. By the construction in \cite[\S4]{Yu01}, the restriction $\rho_d|_{G(F)_{x\ge r}}$ is $\rho_X$-isotypic. In particular $\pi_d$ contains the  nondegenerate Moy-Prasad type $(x,X)$.

On the Galois side, while a full Langlands parameter $\varphi:W_F' \ra{}^LG$ corresponding to $\pi_d$ has a subtle construction in \cite[\S5]{Kal19} and \cite[\S4]{Kal19b}, its restriction $\varphi|_{\Group_F^{0+}}$ to the wild inertia subgroup $\Group_F^{0+}$ is rather simple: by local Langlands for tori we have $\varphi_\theta:W_F\ra{}^LT$ which can be restricted to $\varphi_\theta|_{\Group_F^{0+}}:\Group_F^{0+}\ra T^{\vee}$ and  (a much easier) part of Kaletha's construction has $\varphi|_{\Group_F^{0+}}=(T^{\vee}\ira G^{\vee})\circ\varphi_\theta|_{\Group_F^{0+}}$ up to $G^{\vee}$-conjugation. In particular, $\varphi|_{\Group_F^r}=(T^{\vee}\ira G^{\vee})\circ\varphi_\theta|_{\Group_F^r}$ up to $G^{\vee}$-conjugation. Since $\theta|_{T(F)_{\ge r}}=\chi_{X,F}^T$, we have  $\varphi_\theta|_{\Group_F^r}=\phi_{X,F}^T$.  The theorem now follows from Corollary \ref{cor:comparenew}.
\end{proof}

\section{Depth zero case} \label{sec:depthzero}
 We fix  an isomorphism
$\iota:\bar k^\times\simeq(\mathbb Q/\mathbb Z)_{p'}$,
where $(\mathbb Q/\mathbb Z)_{p'}\subset \mathbb Q/\mathbb Z$ is the subgroup of elements of 
order prime to $p$.

\subsection{Depth zero Moy-Prasad types}

 A depth zero Moy-Prasad type 
    is a pair
$( \depthzerofirstentry{x},\chi)$ where  $x\in\cB(G,F)$
    and $\chi$ is a cuspidal representation of $G(F)_{x=0}$
inflated to $G(F)_{x\geq0}$.
We denote by 
$\MP_0
$ 
the set of  
depth zero Moy-Prasad types for $G(F)$.

Suppose $S$ is a maximal $F^u$-split $F$-torus in $G$ that contains a maximal $F$-split torus of $G$.  So, for example, $S$ could be $A^u$.
Let $W_{S}=N_G(S)(F^u)/C_{G(F^u)}(S)$ be the relative Weyl group
of $G(F^u)$.
There is an algebraic $k$-torus
$\mathsf S$ such that 
$S(F^u)_{=0}=\mathsf S(\bar k)$. 
Moreover, we have a natural action of $W_S$ on $\mathsf S(\bar k)$.  This action is defined over $k$.

For any $x\in\cB(G,F)$ 
there exists a connected reductive $k$-group $\mathsf G_x$  such that
$G(F^u)_{x=0}=\mathsf G_{x}(\bar k)$. 
Assume $x$ is in the apartment $\cA_S\subset\cB(G,F)$
associated to the maximal $F$-split subtorus of $S$.  
There is a natural inclusion 
$\mathsf S\subset\mathsf G_x$,
defined over $k$,
which realizes $\mathsf S$
as a maximal $k$-torus of $\mathsf G_x$. 
Let $W_{x,S}=N_{\mathsf G_x(\bar k)}(\mathsf S(\bar k))/\mathsf S(\bar k)$ be the Weyl group of $\mathsf G_x$.
Since $N_{G(F^u)}(S) \cap G(F^u)_{x >0} = T(F^u)_{>0}$~\cite[Lemma~7.2.1]{debacker:totally}, there is a natural embedding
$i_x \colon W_{x,S}\cong N_{G(F^u)_{x\geq0}}(S)/T(\mathcal O_{F^u})\to  W_S$
that is compatible with the actions of $W_{x,S}$ and $W_S$ on $\mathsf{S}(\bar{k})$.

Let $S^\vee=X^*(S)\otimes\mathbb C^\times$ be the complex dual torus of $S$. It carries a natural action of $W_S$, and we denote by $S^\vee\quo W_S$ the GIT quotient. The Frobenius endomorphism $\changeFr:G(F^u)\to G(F^u)$
induces a morphism 
$\changeFr:S^\vee\quo W_S\to S^\vee\quo W_S$,
and we denote by $(S^\vee\quo W_S)^{\changeFr}$ its $\changeFr$-fixed points.

As noted in Section~\ref{sec:notation}, 
 for another such 
maximal $F^u$-split $F$-torus $S'$ containing a maximal  
$F$-split torus, there is a 
$g\in G(F)$ such that $\text{Ad}_g(S)=S'$,
and the conjugation action 
$\text{Ad}_g^* \colon (S')^\vee=X^*(S')\otimes\mathbb C^\times \to X^*(S)\otimes\mathbb C^\times =S^\vee$
descends to 
a canonical $\changeFr$-equivariant isomorphism
 $(S')^\vee\quo W_{S'}\cong S^\vee\quo W_S$.
We define the set of {\bf depth zero Deligne-Lusztig parameters} to be 
\begin{equation}
    \DLzero:=\lim_{S}\ (S^\vee\quo W_S)^{\changeFr}
\end{equation}
where the limit is over all the maximal $F^u$-split $F$-tori $S$ that contain a maximal $F$-split torus, as above.

\begin{lemma}\label{construction 1}
There is a map
\[\MP_0\longrightarrow\DLzero.\]

\end{lemma}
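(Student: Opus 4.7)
The plan is to use Deligne--Lusztig theory for the finite reductive group $\mathsf{G}_x$, together with the standard duality between characters of twisted fixed points of a torus and classes in the dual torus. Given $(x,\chi)\in\MP_0$, the representation $\chi$ is a cuspidal irreducible representation of $\mathsf{G}_x(k)=G(F)_{x=0}$. I would first invoke Deligne--Lusztig theory to attach to $\chi$ a pair $(\mathsf{T},\theta)$ consisting of an $F$-stable maximal $k$-torus $\mathsf{T}\subseteq\mathsf{G}_x$ (necessarily elliptic because $\chi$ is cuspidal) and a character $\theta:\mathsf{T}(k)\to\Cc$, well-defined up to $\mathsf{G}_x(k)$-conjugation and the action of the relative Weyl group $N_{\mathsf{G}_x}(\mathsf{T})^F/\mathsf{T}^F$ on $\theta$.

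Next, to produce an element of $(S^\vee\quo W_S)^{\changeFr}$ from this data, I would first choose $S$ satisfying the conditions of the definition of $\DLzero$ with $x\in\cA_S$; this is possible since every $x\in\cB(G,F)$ is $G(F)$-conjugate to a point of $\cA=\cA_{A^u}$. Since all maximal tori of $\mathsf{G}_x$ are $\mathsf{G}_x(\bar k)$-conjugate, there exists $g\in\mathsf{G}_x(\bar k)$ with $\Ad(g)\mathsf{T}=\mathsf{S}$. The normalizing element $gF(g)^{-1}\in N_{\mathsf{G}_x(\bar k)}(\mathsf{S})$ has image $w\in W_{x,S}$, and conjugation by $g$ carries the Frobenius on $\mathsf{T}$ to the $w$-twisted Frobenius on $\mathsf{S}$, transporting $\theta$ to a character $\theta'$ on the twisted fixed points $\mathsf{S}(\bar k)^{wF}$. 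Using the fixed isomorphism $\iota:\bar k^\times\cong(\Q/\Z)_{p'}$ together with the duality between characters of $\mathsf{S}(\bar k)^{wF}$ and $(wF)^*$-coinvariants of $\mathsf{S}^\vee=S^\vee$, the character $\theta'$ corresponds to a class $[s]\in\mathsf{S}^\vee$ whose image in $S^\vee\quo W_{x,S}$ is $\changeFr$-fixed. Composition with the embedding $i_x:W_{x,S}\hookrightarrow W_S$ then yields the desired element of $(S^\vee\quo W_S)^{\changeFr}$.

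The main obstacle will be verifying that this class depends only on $(x,\chi)$ and not on the auxiliary choices. The Weyl-group ambiguity in $\theta$ is absorbed after projection to $S^\vee\quo W_S$ through $i_x$. A different conjugator $g'=ng$ with $n\in N_{\mathsf{G}_x(\bar k)}(\mathsf{S})$ modifies $w$ and $[s]$ compatibly under the $W_{x,S}$-action and hence yields the same image in $S^\vee\quo W_S$. The dependence on $S$ is handled by the canonical $\changeFr$-equivariant isomorphisms $(S')^\vee\quo W_{S'}\cong S^\vee\quo W_S$ built into the definition of $\DLzero$. The most delicate point is the interplay between Deligne--Lusztig theory on $\mathsf{G}_x(k)$ and the Bruhat--Tits structure at $x$: one needs to check that $i_x$ intertwines the natural Frobenius actions on the two Weyl groups and that the semisimple-parameter construction is compatible with this embedding.
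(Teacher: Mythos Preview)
Your approach is essentially the paper's: the paper simply invokes \cite[\S5]{DL76} and \cite[\S16, map (a)]{L07} to attach to $\chi$ its Deligne--Lusztig semisimple parameter $\theta_{S,\chi}\in(S^\vee\quo W_{x,S})^{\changeFr}$, then pushes forward via $i_x$ and checks independence of $S$ using that all apartments through $x$ are $G(F)_{x\geq0}$-conjugate---you are spelling out that black-boxed construction. One caveat: the pair $(\mathsf{T},\theta)$ is well-defined only up to \emph{geometric} conjugacy, not rational conjugacy plus rational Weyl-group action (e.g.\ a unipotent cuspidal appears in $R_T^1$ for every rational class of maximal torus, so $T$ is not ``necessarily elliptic''); but since your target $S^\vee\quo W_{x,S}$ parametrizes exactly geometric conjugacy classes, the projection already absorbs this and the construction goes through.
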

\begin{proof}
Let $(\depthzerofirstentry{x},\chi)$
be a depth zero Moy-Prasad type.
Pick a maximal $F^u$-split $F$-torus $S$ that contains a maximal $F$-split torus as above such that  
$x$ is in the apartment $\cA_{S}\subset B(G,F)$. 
The reductive quotient $\mathsf{S}$ of $S$ is a maximal $k$-torus that contains a maximal $k$-split torus in $\mathsf{G}_x$, and we  identify $X^*(S)$ with $X^*(\mathsf{S})$. Then, by the method of \cite[Section 5]{DL76}, the map (a) in \cite[Section 16]{L07} attaches to the cuspidal representation $\chi$ of $\mathsf G_x(k)$ an element $\theta_{S,\chi}\in(S^\vee\quo W_{x,S})^{\changeFr}$ that we call the Deligne-Lusztig parameter of $\chi$.
Note that the original construction 
in \cite[Section 5]{DL76} requires a choice of isomorphism $\iota:(\mathbb Q/\mathbb Z)_{p'}\cong \bar k ^\times$
and an embedding $(\mathbb Q/\mathbb Z)_{p'}\to\bar{\mathbb Q}^\times_\ell$, but as observed in \cite[Section 16]{L07} the choice of $\iota$ suffices.

Since all the apartments of $B(G,F)$ that contain $x$
are conjugate to each other by elements in $G(F)_{x\geq0}$, it follows that 
the image 
of $\theta_{S,\chi}$
along the composition of maps
\[(S^\vee\quo W_{x,S})^{\changeFr} \stackrel{i_x}\to (S^\vee\quo W_S)^{\changeFr} \to\DLzero\]
is independent of the choice of $S$ such that $x \in \cA_S$.   We let $\theta_{x,\chi}$ denote the image in $\DLzero$ of $\theta_{S,\chi}$.
The desired map is given by
\begin{equation}\label{MP to DL}    
\begin{array}{ccc}
   \MP_0=\{\text{depth zero Moy-Prasad types}\}&\to&\DLzero\\
   (\depthzerofirstentry{x},\chi)&\mapsto&\theta_{x,\chi}.  
\end{array}   \qedhere 
\end{equation}
\end{proof}

 Two depth zero Moy-Prasad types 
$(\depthzerofirstentry{x},\chi)$
and $(\depthzerofirstentry{y},\xi)$
are said to be associates provided that  there exists $g\in G(F)$ such that 
$G(F)_{x\geq0}\cap G(F)_{gy\geq0}$
surjects onto both 
$G(F)_{x=0}$ and $G(F)_{gy=0}$
and $\chi$ is isomorphic to   $\text{Ad}_g(\xi)$ (the transport of $\xi$ to $\Rep(G(F)_{gy=0})$ via $\Ad(g^{-1})^*$).
    
\begin{lemma}\label{ass}
    If two depth zero Moy-Prasad types $(\depthzerofirstentry{y},\xi),(\depthzerofirstentry{x},\chi)$ are associates, then $\theta_{y,\xi} = \theta_{x,\chi}$.
\end{lemma}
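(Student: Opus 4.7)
The plan is first to reduce to the case $g=1$ using the $G(F)$-equivariance of the construction $(y,\xi)\mapsto\theta_{y,\xi}$, and then to pick a single maximal $F^u$-split $F$-torus $S$ whose apartment contains both $x$ and $y$, so that $\theta_{x,\chi}$ and $\theta_{y,\xi}$ are computed inside the same quotient $S^\vee\quo W_S$.

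For the reduction, I would verify that $\theta_{gy,\text{Ad}_g\xi}=\theta_{y,\xi}$ for every $g\in G(F)$. If $y\in\cA_S$, then $gy\in\cA_{gSg^{-1}}$, and $\text{Ad}_g$ induces a $k$-isomorphism $\mathsf G_y\xrightarrow{\sim}\mathsf G_{gy}$ carrying $\mathsf S\subset\mathsf G_y$ to its analogue in $\mathsf G_{gy}$ and $\xi$ to $\text{Ad}_g(\xi)$. Since the Deligne-Lusztig semisimple parameter is functorial under such $k$-isomorphisms and $\text{Ad}_g^*$ is precisely the canonical isomorphism $(gSg^{-1})^\vee\quo W_{gSg^{-1}}\cong S^\vee\quo W_S$ used to define $\DLzero$, the two images in $\DLzero$ coincide. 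Replacing $(y,\xi)$ by $(gy,\text{Ad}_g\xi)$ I may therefore assume $g=1$.

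Under this reduction, $H:=G(F)_{x\geq 0}\cap G(F)_{y\geq 0}$ surjects onto both $\mathsf G_x(k)$ and $\mathsf G_y(k)$, and $\chi$ and $\xi$ have isomorphic inflations to $H$. The Bruhat-Tits content of the double surjectivity is that the two parahorics share the same depth-zero root data, so one can find a single maximal $F^u$-split $F$-torus $S$ (containing a maximal $F$-split torus) with both $x,y\in\cA_S$ and with $\mathsf S$ a common maximal $k$-torus in $\mathsf G_x$ and $\mathsf G_y$; the embeddings $i_x:W_{x,S}\hookrightarrow W_S$ and $i_y:W_{y,S}\hookrightarrow W_S$ then land in the same ambient Weyl group. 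Deligne-Lusztig theory characterizes the semisimple parameter of a cuspidal representation of a finite reductive group by the restriction of its character to the chosen maximal torus $\mathsf S(k)$, up to $\changeFr$-twisted Weyl conjugacy. Since every element of $\mathsf S(k)$ admits a lift in $H$ and the inflations of $\chi$ and $\xi$ to $H$ agree, these restrictions coincide; therefore $\theta_{S,\chi}$ and $\theta_{S,\xi}$ have the same image in $(S^\vee\quo W_S)^{\changeFr}$, and $\theta_{x,\chi}=\theta_{y,\xi}$ in $\DLzero$.

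The main obstacle is the geometric step: extracting from the abstract double-surjectivity hypothesis the existence of a common apartment $\cA_S$ containing both $x$ and $y$ together with a single $\mathsf S$ that is a maximal $k$-torus in both $\mathsf G_x$ and $\mathsf G_y$. The surjectivity condition forces each affine root vanishing at $x$ to take a nonnegative value at $y$ and conversely, but converting this constraint on affine root data into a joint-apartment statement requires the concave-function machinery from Bruhat-Tits theory together with DeBacker's compatibility of reductive quotients under parahoric intersections. Once this common $S$ is in hand, the remaining Deligne-Lusztig comparison is routine via the character formula.
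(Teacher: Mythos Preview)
Your reduction to $g=1$ is correct and essentially matches the paper's first step.

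The genuine gap is in your final Deligne--Lusztig comparison. You assert that the semisimple parameter of a cuspidal representation is determined by the restriction of its character to $\mathsf S(k)$, and then conclude from the agreement of $\chi$ and $\xi$ on lifts of $\mathsf S(k)$ in $H$ that their images in $(S^\vee\quo W_S)^{\changeFr}$ coincide. This characterization is not correct: the Deligne--Lusztig parameter of a cuspidal $\rho$ records the $W_{x,S}$-orbit of pairs $(T,\theta)$ with $\langle\rho,R_T^\theta\rangle\neq 0$, and for cuspidals the contributing $T$ are typically anisotropic, not $\mathsf S$. The restriction of the character to the split torus $\mathsf S(k)$ does not in general recover this orbit. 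Moreover, even granting that $\mathsf S$ sits in both $\mathsf G_x$ and $\mathsf G_y$, these are \emph{different} finite reductive groups with different Weyl groups $W_{x,S}$ and $W_{y,S}$, so an equality of character restrictions to $\mathsf S(k)$ says nothing a priori about the compatibility of the two parameters under $i_x$ and $i_y$.

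The paper resolves this by doing more than placing $x,y$ in a common apartment. Using \cite[Proposition~6.2]{MP96} it arranges that the facets $F(x)$ and $F(y)$ share a closure point $z$ and span the same affine subspace of $\cA_S$. Then $\mathsf P_x,\mathsf P_y\subset\mathsf G_z$ are parabolics whose Levi subgroups containing $\mathsf S$ are forced to coincide with a single $\mathsf L$, and both $\mathsf G_x$ and $\mathsf G_y$ are identified with $\mathsf L$. Now $\chi$ and $\xi$ are isomorphic representations of the \emph{same} group $\mathsf L(k)$, so $\theta_{S,\chi}=\theta_{S,\xi}$ already in $S^\vee\quo W_{z,S}$, hence in $S^\vee\quo W_S$. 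Your sketch of the geometric step should therefore aim not merely for a common apartment but for this stronger Levi identification; once you have it, no character-restriction argument is needed.
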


\begin{proof}
We will prove this in two steps.
We  first show that if there is a $g\in G(F)$ such that 
$(\depthzerofirstentry{y},\xi) = (\depthzerofirstentry{gx},\text{Ad}_g\chi)$, then $\theta_{y,\xi} = \theta_{x,\chi}$.   Suppose such a $g \in G(F)$ exists.  Note that $y = gx$.  
Pick a maximal $F^u$-split $F$-torus $S$ that contains a maximal $F$-split torus  such that 
$x\in\cA_S$.  Set $S'=\text{Ad}_gS$.
Then we have $y\in\cA_{S'}$,
and it follows from the construction that the parameter $\theta_{S',\xi}$
maps to the parameter $\theta_{S,\chi}$
under the transition map
$\text{Ad}_g^*:(S')^\vee\quo W_{y,S'}\cong S^\vee\quo W_{x,S}$.
Since the diagram
\[
\begin{tikzcd}
(S')^\vee\quo W_{y,S'}\ar[d,"\Ad^*_g","\sim"'{sloped, anchor=north}]\ar[r,hook,"i_{gx}"]&(S')^\vee\quo W_{S'}\ar[d,"\Ad^*_g","\sim"'{sloped, anchor=north}]\\
S^\vee\quo W_{x,S}\ar[r,hook,"i_x"]&S^\vee\quo W_{S}
\end{tikzcd}
\]
commutes, we conclude that $\theta_{x,\chi}=\theta_{y,\xi}$.

Suppose   $(\depthzerofirstentry{y},\xi)$ is associate to $(\depthzerofirstentry{x},\chi)$.  Thanks to the previous paragraph
we may assume
i) $x,y \in \cA_S$, 
ii) $G(F)_{x\geq0}\cap G(F)_{y\geq0}$
surjects onto both 
$\mathsf{G}_x(k)$ and $\mathsf{G}_y(k)$,
and iii) the pull backs of  $\chi$ and $\xi$ to  $G(F)_{x\geq0}\cap G(F)_{y\geq0}$ are isomorphic.
For $t \in \cA_S$ let $F(t)$ denote the facet in $\cA_S$ containing $t$. By arguing  as in \cite[Proposition 6.2]{MP96}, we can arrange  that (a)  $F(x)$ and $F(y)$ have a common point $z$ in their closures and (b)  the smallest affine subspace in $\cA_S$ containing $F(x)$  is equal to the smallest affine subspace in $\cA_S$ containing $F(y)$.  Thanks to (a) we have 
$\mathsf{P}_x= G(F^u)_{x\geq0}/G(F^u)_{z>0}$
and $\mathsf{P}_y=G(F^u)_{y\geq0}/G(F^u)_{z>0}$ are parabolic $k$-subgroups of $\mathsf{G}_z=G(F^u)_{z\geq0}/G(F^u)_{z>0}$
such that $\sS \leq \mathsf{P}_x \cap \mathsf{P}_y$.   Let $\sL_x \leq \sP_x$ and $\sL_y \leq \sP_y$ denote the (unique) Levi $k$-subgroups such that $\sS \leq \sL:=\sL_x\cap\sL_y$.  Since $\sL_x$ (resp. $\sL_y$) is generated by $\sS$ and the root groups corresponding to those affine roots (with respect to $F^u$) which are zero on $F(x)$ (resp. $F(y)$), from (b) we have $\sL = \sL_x = \sL_y$ and $\sL$ may be identified with both $\sG_x$  and $\sG_y$.
 Since $\chi$ is isomorphic to $\xi$ when pulled back to $\mathsf{L}$, it follows that 
$\theta_{S,\chi}$ and $\theta_{S,\xi}$ have the same image in $ S^\vee\quo W_{z,S}$, and hence in $S^\vee\quo W_S$.  We conclude that $\theta_{x,\chi} = \theta_{y,\xi}$.
\end{proof}

\subsection{Restricted depth zero parameters}
 Recall that  $\Group_F^{0+}\subset \Group_F\subset W_F$ denote the wild inertia and the inertia subgroups of the Weil group for $F$.
We have a short exact sequence
\[1\to \Group_F\to W_F\stackrel{v}\to\mathbb Z\to 1\]
We fix a geometric Frobenius  $\text{Fr}\in W_F$
such that $v(\text{Fr})=1$ and denote by $\sigma$ the  image of $\text{Fr}$ in the tame Weil group $W_F/\Group_F^{0+}$.
The subgroup $\Group_F/\Group_F^{0+}$ in the tame Weil group is called the tame inertia subgroup. 
The above short exact sequence gives rise to 
\[1\to \Group_F/\Group_F^{0+}\to W_F/\Group_F^{0+}\stackrel{v}\to\mathbb Z\to 1\]

Recall that the maximal $F$-torus $A$ was fixed in Section~\ref{sec:notation}.  Choose a Borel $F^u$-subgroup $B$ in $G$ such that $A \subset B$.
Let 
$\psi_0(G)=(X^*(A),\Delta,X_*(A),\Delta^\vee)$ be the based root datum of $G$
attached to the
Borel pair $(B,A)$ of $G$
over $F^u$. 
Let $G^\vee$ be the complex dual group of $G$.  We fix a pinning
$(G^\vee,B^\vee,A^\vee,e^\vee=\{x_\alpha\}_{\alpha\in\Delta})$,
and hence an action of $W_F$ via the homomorphism
\[\mu_G:W_F\to\text{Aut}(\psi_0(G))\cong\text{Aut}(G^\vee,B^\vee,A^\vee,e^\vee).\]
Since $G$ splits over a finite tame extension 
we
have $\Group_F^{0+}\subset\text{Ker}(\mu_G)$,
and so the action of $W_F$ 
on $G^\vee$ factors through the tame Weil group 
$W_F/\Group_F^{0+}$.

\begin{definition} \label{def:toralzeromapa} 
A \textbf{depth zero Langlands parameter}
    is a continuous cocycle 
    \[(\rho:W_F/\Group_F^{0+}\to G^\vee)\in Z^1(W_F/\Group_F^{0+},G^\vee)\]
    such that $\rho(\sigma)$ is  semi-simple.
        A \textbf{restricted depth zero parameter} is a continuous cocycle
    \[(\phi:\Group_F/\Group_F^{0+}\to G^\vee)\in Z^1(\Group_F/\Group_F^{0+},G^\vee)\]
    that admits an extension to a depth zero 
    Langlands parameter.
    We write 
    $\TPzero$ for the set of 
    $G^\vee$-conjugacy classes of 
    restricted depth zero parameters.
\end{definition}    
    
We note that $\TPzero$ is independent of our choice of pinning $(G^\vee,B^\vee,A^\vee,e^\vee)$ as all pinnings are $G^{\vee}$-conjugate.

\begin{lemma}\label{construction 2}
  There is a bijection
\[\TPzero\cong\DLzero.\]
\end{lemma}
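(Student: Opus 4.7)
The plan is to construct mutually inverse maps between $\DLzero$ and $\TPzero$, making extensive use of local Langlands for tori.

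The key structural observation is that since $G$ splits over $F^t$, the homomorphism $\mu_G$ factors through the unramified quotient $W_F/\Group_F$, and so the tame inertia $\Group_F/\Group_F^{0+}$ acts trivially on $G^\vee$. Consequently, a restricted depth zero parameter is a continuous homomorphism $\phi \colon \Group_F/\Group_F^{0+} \to G^\vee$, and since $\Group_F/\Group_F^{0+}$ is abelian pro-$p'$, the image of $\phi$ consists of commuting semi-simple torsion elements of order coprime to $p$, hence lies in some maximal torus of $G^\vee$. After $G^\vee$-conjugation, I may take that torus to be $A^\vee$.

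For the map $\TPzero \to \DLzero$, I will adjust $\rho(\sigma)$ within $Z_{G^\vee}(\im(\phi))$ so that it normalizes $A^\vee$, giving a Weyl class $w$ in the absolute Weyl group $W$ of $G^\vee$. Using the canonical isomorphism $\Group_F/\Group_F^{0+} \cong \lim_{\leftarrow}\, \mu_n(\bar k)$ together with $\iota$, I identify the set of continuous homomorphisms $\Group_F/\Group_F^{0+} \to A^\vee$ with $X^*(A) \otimes \bar k^\times$. Dualizing the inclusion $A^u \hookrightarrow A$ yields a surjection $A^\vee \twoheadrightarrow (A^u)^\vee = S^\vee$, and the cocycle condition relating $\rho(\sigma)$, $\sigma(\phi)$, and $\phi^q$ translates into the $\changeFr$-fixedness of the image $W_S$-orbit of $\phi$ in $S^\vee \quo W_S$. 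The inverse map reverses these steps: a lift $\tilde\theta \in S^\vee$ of $\theta$ with $\changeFr(\tilde\theta) = w\,\tilde\theta$ for some $w \in W_S$ gives, via local class field theory for the tame torus $A$ (as in \cite[\S7]{Yu09}), a depth zero character of $A(F)$ and hence a depth zero Langlands parameter valued in $A^\vee \subset G^\vee$ whose Frobenius value realizes $w$; its restriction to the tame inertia is the desired $\phi$.

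The main technical obstacle is checking well-definedness and inverseness. The ambiguities on the parameter side---the choice of representative $\rho(\sigma)$ within its $Z_{G^\vee}(\im(\phi))$-coset, and the choice of maximal torus containing $\im(\phi)$---must match exactly the quotienting by $W_S$ together with the limit over all $S$ in the definition of $\DLzero$. The hardest piece of bookkeeping is matching the absolute Weyl group $W$ of $G^\vee$ (with its $\sigma$-twisted conjugation coming from the $F$-structure of $G$) against the relative Weyl group $W_S$ (the $\Gal(F^t/F^u)$-invariants of $W$) together with the $\changeFr$-action on $S^\vee \quo W_S$. Once this compatibility is verified, inverseness reduces to the fact that local class field theory for tori is an isomorphism on the relevant cohomology groups.
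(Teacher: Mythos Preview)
There is a genuine error at the outset. You claim that since $G$ splits over $F^t$, the homomorphism $\mu_G$ factors through the \emph{unramified} quotient $W_F/\Group_F$, so that tame inertia acts trivially on $G^\vee$. This is false: the hypothesis is only that $G$ splits over the maximal \emph{tame} extension $F^t$, not over the maximal unramified extension $F^u$. The paper states precisely this---$\mu_G$ factors through $W_F/\Group_F^{0+}$, not through $W_F/\Group_F$. For instance, a quasi-split unitary group attached to a tamely ramified quadratic extension is split over $F^t$ but not over $F^u$, and the tame inertia acts on $G^\vee$ via the nontrivial diagram automorphism. Consequently a restricted depth zero parameter is a genuine cocycle, not a homomorphism, and your argument that its image consists of commuting semisimple elements (hence lies in a maximal torus of $G^\vee$) collapses.

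The paper handles exactly this issue by working in the semidirect product: it sends $\phi$ to $\phi(\theta_\iota)\rtimes\theta_\iota\in G^\vee\rtimes\theta_\iota$, so that the cocycle condition becomes the ordinary conjugation action, and then invokes Borel's parametrization \cite[Proposition~6.7]{B77} of semisimple conjugacy classes in a disconnected reductive group to get the bijection $(G^\vee\rtimes\theta_\iota)_{ss}/G^\vee\cong S^\vee\quo W_S$. The $\changeFr$-fixedness then emerges from the relation $\sigma\theta_\iota^q\sigma^{-1}=\theta_\iota$. Your outline could be repaired by passing to $G^\vee\rtimes(\Group_F/\Group_F^{0+})$ from the start, but as written it applies only to the special case where $G$ is $F^u$-split. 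Note also that your proposed inverse via local Langlands for $A$ is not quite the paper's route: the paper constructs the extension $\rho$ directly by hand (setting $\rho(\theta_\iota)=a$, $\rho(\sigma)=n$ for a suitable $n\in N_{\theta_\iota}$), rather than appealing to the correspondence for tori.
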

\begin{proof}
   From~\cite[IV.2 Exc. 2b]{serre:local} and  the canonical isomorphism $\mathbb Q/\mathbb Z\cong {\dsp \lim_{{} \to {}} \Z/n \Z}$ there are canonical  
    identifications 
    \[\Group_F^{0:0+} = \Group_F/\Group_F^{0+}\cong\lim_{n>0}\mu_n(\bar k)\cong
    \Hom((\mathbb Q/\mathbb Z)_{p'},\bar k^\times).\]
    The isomorphism $\iota:\bar k^\times \cong(\mathbb Q/\mathbb Z)_{p'}$  
    defines a pro-generator $\theta_\iota\in \Group_F/\Group_F^{0+}$.
    Therefore, a continuous cocycle $\phi:\Group_F^{0:0+} \to G^\vee$ is completely determined by
    its value at $\theta_\iota$, 
   and the assignment $\phi\to \phi(\theta_\iota)\rtimes\theta_\iota$
    defines an injection 
    \[\TPzero\to (G^\vee\rtimes\theta_\iota)_{ss}/ G^\vee\]
    where the right hand side is the  set of 
    $G^\vee$-conjugacy classes of semisimple elements in $G^\vee\rtimes\theta_\iota$ (here $G^\vee$ acts on $G^\vee\cong G^\vee\rtimes\theta_\iota$ by the twisted conjugation action $
    h(g\rtimes\theta_\iota)h^{-1}=hg\theta_\iota(h)^{-1}\rtimes\theta_\iota$). 
Note that $\sigma\theta_\iota^q\sigma^{-1}=\theta_\iota$ implies 
$(G^\vee\rtimes\theta_\iota)_{ss}/ G^\vee$ is stable under the   map
    $[\sigma\circ q]:(G^\vee\rtimes\theta_\iota)_{ss}/ G^\vee\to(G^\vee\rtimes\theta_\iota)_{ss}/ G^\vee$ induced by 
 \[(G^\vee\rtimes\theta_\iota)_{ss}\to (G^\vee\rtimes\theta_\iota)_{ss}\ \ \ \ g\rtimes\theta_\iota\mapsto\sigma((g\rtimes\theta_\iota)^q)=(1\rtimes\sigma)(g\rtimes\theta_\iota)^q(1\rtimes\sigma^{-1}).\]
 We claim that  the injection above factors through
\[\TPzero\to ((G^\vee\rtimes\theta_\iota)_{ss}/ G^\vee)^{[\sigma\circ q]}\]
    where $((G^\vee\rtimes\theta_\iota)_{ss}/ G^\vee)^{[\sigma\circ q]}$ is the fixed point set of $[\sigma\circ q]$.
 Indeed, choose a depth zero parameter $\rho$ extending $\phi$, then by repeatedly using the cocycle condition we have
 \begin{equation*}
     \begin{split}
\sigma((\phi(\theta_\iota)\rtimes\theta_\iota)^q) &=
 \sigma(\phi(\theta_\iota^q)\rtimes\theta_\iota^q)=(\sigma(\phi(\theta_\iota^q))\rtimes \theta_\iota)=(\sigma(\rho(\sigma^{-1}\theta_\iota\sigma))\rtimes \theta_\iota) \\
&=\sigma(\rho(\sigma^{-1}))\rho(\theta_\iota)\theta_\iota(\rho(\sigma))\rtimes\theta_\iota=
\rho(\sigma)^{-1}\phi(\theta_\iota)\theta_\iota(\rho(\sigma))\rtimes\theta_\iota,
 \end{split}
\end{equation*}
which is in the same orbit as $ \phi(\theta_\iota)\rtimes\theta_\iota$.

To ease notation set $S = A^u$, the maximal $F^u$-split subtorus of $A$.  Then $A = C_G(S)$ and $W_S = W_A^{\theta_\iota}$ where $W_A = N_G(A)/A$.  According to \cite[Proposition 6.7]{B77} there are bijections 
    \begin{equation} \label{eq:bijectionsappb}
        (G^\vee\rtimes\theta_\iota)_{ss}/ G^\vee\cong (A^\vee\rtimes\theta_\iota)/N_{\theta_{\iota}} \cong 
    S^\vee\quo W_S
    \end{equation}
    where $N_{\theta_{\iota}}$ is the inverse image 
    of $W_S\cong W_A^{\theta_{\iota}} \subset W_A = N_G(A)/A \cong N(A^\vee)/A^\vee$ in $N(A^\vee)$. 
    Moreover, the bijections of~\eqref{eq:bijectionsappb} restrict to bijections
    \[(G^\vee\rtimes\theta_\iota)_{ss}/ G^\vee)^{[\sigma\circ q]}\cong 
    (A^\vee\rtimes\theta_\iota)/N_{\theta_\iota}) ^{[\sigma\circ q]}\cong 
    (S^\vee\quo W_S)^{[\sigma\circ q]}\cong 
    (S^\vee\quo W_S)^{\changeFr}\cong\DLzero\]
    where the bijection $(S^\vee\quo W_S)^{[\sigma\circ q]}\cong 
    (S^\vee\quo W_S)^{\changeFr}$
    comes from the equality 
    \[\chi^q=\sigma^{-1}\circ\changeFr(\chi)\]
    for $\chi\in X^*(S^\vee)\cong X_*(S)$. (Recall $\sigma^{-1} $ is the image of the arithmetic Frobenius in the tame Weil group $W_F/\Group_F^{0+}$.)
All together, we obtain an injection
\begin{equation}\label{injection}
    \TPzero\to(G^\vee\rtimes\theta_\iota)_{ss}/ G^\vee)^{[\sigma\circ q]}\cong\DLzero.
\end{equation}
It remains to show that~\eqref{injection} is surjective. 
Let $a\rtimes\theta_\iota\in A^\vee\rtimes\theta_\iota$
be a representative of an orbit 
$\gamma$ in 
$(A^\vee\rtimes\theta_\iota/ N_{\theta_\iota})^{[\sigma\circ q]}\cong(G^\vee\rtimes\theta_\iota)_{ss}/ G^\vee)^{[\sigma\circ q]}$
and let $\phi:\Group_F^{0:0+}\to G^\vee$, $\phi(\theta_\iota)=a$ be the corresponding restricted depth zero parameter.
There exists $n\in N_{\theta_\iota}$ such that 
$\sigma((a\rtimes\theta_\iota)^q)=n^{-1}(a\rtimes\theta_\iota)n
$. Since $n$ is semi-simple,  
$\phi$ admits an extension
to a depth zero Langlands parameter 
$\rho:W_F/\Group_F^{0+}\to G^\vee$ that is characterized by 
$\rho(\theta_\iota)=a$ and $\rho(\sigma)=n$ (see~\cite[8.2]{B77} for the requirements a Langlands parameter should satisfy).
It follows that~\eqref{injection} is surjective.
\end{proof}

Thanks to~\cite[Thm. 5.2]{MP94} and \cite[Thm. 3.5]{MP96} there is a map from $\Irr(G(F))_0$, the set of (equivalence classes of) irreducible depth zero representations of $G(F)$, to the set of associate classes in
    $\MP_0$.
Now combining Lemma \ref{construction 1}, Lemma \ref{ass}, and 
Lemma \ref{construction 2}, we obtain the desired map.

\begin{lemma}  \label{lem:pfinzero}
Theorem~\ref{thm:mainthm} is valid in the depth zero situation.  That is,
    there is a map 
\[
\Irr(G(F))_0 \ra \TPzero. 
\]
\end{lemma}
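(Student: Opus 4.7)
The plan is to obtain the map by composing three constructions already established earlier in Section~\ref{sec:depthzero}, via the chain
\[
\Irr(G(F))_0 \longrightarrow \MP_0/\!\sim \,\,\longrightarrow\, \DLzero \xra{\sim} \TPzero,
\]
where the middle set denotes associate classes of depth zero Moy-Prasad types. Each arrow is handled in turn.

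For the first arrow, given $\pi \in \Irr(G(F))_0$ I would invoke Thm.~5.2 of \cite{MP94} together with Thm.~3.5 of \cite{MP96}: these results guarantee that $\pi$ contains at least one depth zero Moy-Prasad type $(x,\chi) \in \MP_0$, and moreover that any two depth zero Moy-Prasad types occurring in $\pi$ are associate. This produces a well-defined map from $\Irr(G(F))_0$ to the set of associate classes of depth zero Moy-Prasad types.

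For the second arrow, I would apply the map of Lemma~\ref{construction 1} to a representative $(x,\chi)$, producing an element $\theta_{x,\chi} \in \DLzero$. Lemma~\ref{ass} guarantees that $\theta_{x,\chi}$ depends only on the associate class of $(x,\chi)$, so the composition yields a well-defined map $\pi \mapsto \theta_\pi := \theta_{x,\chi}$ from $\Irr(G(F))_0$ to $\DLzero$.

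For the third arrow, I would compose with the bijection $\DLzero \xra{\sim} \TPzero$ of Lemma~\ref{construction 2}. Since all three intermediate steps are already established, there is no new obstacle here: the substantive work has been packaged into Lemma~\ref{construction 1} (the Deligne-Lusztig parametrization of cuspidal representations of $\mathsf{G}_x(k)$), Lemma~\ref{ass} (the parabolic descent argument that reconciles types at nearby facets so that associativity implies equality of the DL parameter), and Lemma~\ref{construction 2} (the dictionary between semisimple conjugacy classes in $G^\vee \rtimes \theta_\iota$ and $S^\vee \quo W_S$). The remaining content of the proof is thus simply to observe that these three constructions compose into the desired map.
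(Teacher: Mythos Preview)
Your proposal is correct and matches the paper's own argument essentially verbatim: the paper assembles the map by citing \cite[Thm.~5.2]{MP94} and \cite[Thm.~3.5]{MP96} for the first arrow, then combines Lemmas~\ref{construction 1}, \ref{ass}, and \ref{construction 2} exactly as you describe. The only cosmetic difference is that Lemma~\ref{construction 2} is stated as a bijection $\TPzero \cong \DLzero$, so you are composing with its inverse in the last step, which is of course fine.
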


\begin{remark}
Similar to the positive depth case (see Remarks~\ref{rem:indofaddchar} and~\ref{rem:indofaddchar2}), the isomorphism $\iota: \bar k^\times\cong(\mathbb Q/\mathbb Z)_{p'} $
appears twice in the construction of the map in Lemma~\ref{lem:pfinzero}, namely in Lemma \ref{construction 1} and in
Lemma \ref{construction 2}.  The two appearances cancel out and
the map is 
independent of the choice of $\iota$. 
    
\end{remark}
\begin{conjecture}\label{conjectue: depth zero}
    Let $G$ 
   be a connected reductive group over $F$ that  splits over a tame extension. 
   Let $\pi$ be a depth zero 
   irreducible representation of $G(F)$ and let 
    $\phi_\pi\in\TPzero$ be its 
   restricted depth zero parameter as constructed in Lemma~\ref{lem:pfinzero}.
   Then the restriction of 
   a Langlands parameter for $\pi$ 
   to $\Group_F/\Group_F^{0+}$ lands in the $G^\vee$-conjugacy class 
   of $\phi_\pi$.
   
\end{conjecture}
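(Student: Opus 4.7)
The plan is to imitate the positive-depth verification in Section~\ref{subsec:evidence} by checking the conjecture against the established constructions of the depth-zero local Langlands correspondence, most naturally DeBacker-Reeder's construction for tame pure inner forms and its extensions (Kaletha, Fintzen-Kaletha-Spice, etc.). First I would reduce to depth-zero supercuspidal representations: any irreducible depth-zero $\pi$ is a subquotient of $\mathrm{Ind}_P^G \sigma$ for some depth-zero supercuspidal $\sigma$ of a Levi $M$, and (assuming compatibility of LLC with parabolic induction) the restriction $\varphi_\pi|_{\Group_F/\Group_F^{0+}}$ agrees with $\varphi_\sigma|_{\Group_F/\Group_F^{0+}}$ via ${}^L M \hookrightarrow {}^L G$. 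On the Moy-Prasad side, a depth-zero type $(y,\chi_\sigma)$ for $\sigma$ inflates to a type $(x,\chi)$ for $\pi$ after choosing $x$ in a facet of $\cB(G,F)$ whose reductive quotient $\mathsf{G}_x$ contains $\mathsf{M}_y$ as a Levi; compatibility of Lusztig's parametrization with Levi induction (see \cite[Section~8]{L07}) gives $\theta_{x,\chi} = \theta_{y,\chi_\sigma}$ in $\DLzero$, reducing to the supercuspidal case.

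Second, for a depth-zero supercuspidal $\pi$ I would unpack the known construction of $\varphi_\pi$. In DeBacker-Reeder and Kaletha style, $\pi$ corresponds to the data of an unramified elliptic maximal $F$-torus $T \subset G$, with reductive quotient $\mathsf{T} \subset \mathsf{G}_x$ for $x \in \cA$, together with a depth-zero character $\theta\colon T(F) \to \Cc$ whose restriction $\bar\theta$ to $\mathsf{T}(\bar k) = T(F)_{=0}$ is in general position. The cuspidal $\chi$ in the Moy-Prasad type is (up to a sign) the Deligne-Lusztig representation $R_{\mathsf{T}}^{\mathsf{G}_x}(\bar\theta)$, and $\varphi_\pi$ is the composition $W_F \xrightarrow{\varphi_\theta} {}^L T \hookrightarrow {}^L G$, where $\varphi_\theta$ is the LLC parameter for $\theta$.

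Third I would verify that $\varphi_\pi|_{\Group_F/\Group_F^{0+}}$ and $\phi_\pi$ coincide by tracing through Lemma~\ref{construction 2}. On the Galois side, LLC for unramified tori says $\varphi_\theta|_{\Group_F/\Group_F^{0+}}$ is determined by $\bar\theta\colon \mathsf{T}(\bar k) \to \Qlc$ via $\iota\colon \bar k^\times \cong (\Q/\Z)_{p'}$; under the chain of bijections $\TPzero \cong ((G^\vee\rtimes\theta_\iota)_{ss}/G^\vee)^{[\sigma\circ q]} \cong (S^\vee\quo W_S)^{\changeFr}$, this is sent to the class of $\bar\theta$ viewed in $S^\vee$ after conjugating $\mathsf{T}$ into $\mathsf{S}$ over $\bar k$. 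On the representation-theoretic side, Lusztig's attachment in Lemma~\ref{construction 1} assigns to $(x,\chi) = (x,\pm R_{\mathsf{T}}^{\mathsf{G}_x}(\bar\theta))$ precisely the same $W_{x,S}$-orbit of $\bar\theta$ by construction (\cite[Section~5]{DL76}, \cite[Section~16]{L07}). Since both sides of the comparison are set up through the same isomorphism $\iota$, the two classes in $\DLzero$ agree and Conjecture~\ref{conjectue: depth zero} follows for supercuspidals.

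The hard part will be twofold. First, the reduction to the supercuspidal case depends on compatibility of LLC with parabolic induction, which is known for $\GL_n$, classical groups, and several other situations but not in the full tame-split generality of the conjecture; an alternative would be to construct the depth-zero LLC directly for non-cuspidal $\chi$ and check the comparison there. Second, and more seriously, when $\bar\theta$ is merely \emph{non-singular} rather than in general position, $\chi$ is only one of several cuspidal constituents of $\pm R_{\mathsf{T}}^{\mathsf{G}_x}(\bar\theta)$ and the corresponding $L$-packet involves extra data parametrizing internal components. Checking that every member of such a packet still produces the same $\phi_\pi$ requires careful bookkeeping of rigid inner forms and component groups, analogous to but subtler than the positive-depth analysis in Section~\ref{subsec:evidence}.
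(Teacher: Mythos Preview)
The statement is a \emph{conjecture}, and the paper does not prove it. What the paper offers after the conjecture is not a proof but a list of three situations in which it is already known: (1) $G=T$ a tamely ramified torus, via the LLC for tori; (2) irreducible unipotent representations via Kazhdan--Lusztig and Lusztig, where both sides are trivial on $\Group_F/\Group_F^{0+}$; and (3) DeBacker--Reeder's depth-zero supercuspidal LLC, which reduces to (1) through Deligne--Lusztig induction for an unramified torus. Your proposal is therefore not something to compare against a proof in the paper---there is none---but rather a sketch of a program that would go beyond what the paper establishes.

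That said, your supercuspidal step is essentially the paper's case~(3): for a DeBacker--Reeder style $\pi$ attached to $(T,\theta)$ with $T$ unramified elliptic, the restriction $\varphi_\pi|_{\Group_F/\Group_F^{0+}}$ comes from $\varphi_\theta$, and the Deligne--Lusztig parameter of the cuspidal $\chi=\pm R_{\mathsf T}^{\mathsf G_x}(\bar\theta)$ is the $W_{x,S}$-orbit of $\bar\theta$; both are matched through $\iota$ exactly as in Lemma~\ref{construction 2}. So for that class of representations your argument and the paper's evidence coincide. Your case~(2) analogue (unipotent representations) is not covered by your outline, since those $\pi$ need not be supercuspidal and their parameters need not come from a torus character in general position; the paper handles them separately by observing that both sides are trivial.

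The genuine gap, which you yourself flag, is that the reduction to supercuspidals requires compatibility of the LLC with parabolic induction, and this is not available in the generality of the conjecture. Without it your first step does not go through, and the conjecture remains open---which is why the paper states it as a conjecture and records only partial evidence. Your outline is a reasonable strategy for extending the known cases, but it should be presented as such rather than as a proof.
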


    Conjecture \ref{conjectue: depth zero} is known to be true in at least the following three situations.
    (1) When $G=T$ is a tamely ramified torus the validity of Conjecture~\ref{conjectue: depth zero}  follows from the local Langlands correspondence for tori \cite{Yu09}.
    (2) In Kazhdan-Lusztig's and Lusztig's local Langlands correspondence for irreducible unipotent representations~\cite{KL87,L95} we have that Conjecture~\ref{conjectue: depth zero} is true; indeed, in this case the restriction of the Langlands parameters to $\Group_F/\Group_F^{0+}$ is trivial.
    (3)  In DeBacker-Reeder's local Langlands correspondence for depth-zero supercuspidal representations \cite{DR09}, their local Langlands correspondence is ultimately reduced, via Deligne-Lusztig induction at a reductive quotient, to the local Langlands correspondence for unramified tori. Hence, in this case the validity of Conjecture~\ref{conjectue: depth zero} follows from (1).

\subsection*{Acknowledgments} We thank Sarbartha Bhattacharya, Sean Cotner, Ram Ekstrom, Jessica Fintzen, Masao Oi, David Schwein, Loren Spice, and Jiu-Kang Yu for very helpful discussions.
T.-H. Chen also thanks the 
 NCTS-National Center for Theoretical Sciences at Taipei
 where parts of this work were
done. The research of T.-H. Chen is supported by NSF grant DMS-2143722. The research of C.-C. Tsai is supported by NSTC grant 114-2628-M-001-009-.

\def\cfgrv#1{\ifmmode\setbox7\hbox{$\accent"5E#1$}\else
  \setbox7\hbox{\accent"5E#1}\penalty 10000\relax\fi\raise 1\ht7
  \hbox{\lower1.05ex\hbox to 1\wd7{\hss\accent"12\hss}}\penalty 10000
  \hskip-1\wd7\penalty 10000\box7}
\providecommand{\bysame}{\leavevmode\hbox to3em{\hrulefill}\thinspace}
\providecommand{\MR}{\relax\ifhmode\unskip\space\fi MR }
\providecommand{\MRhref}[2]{%
  \href{http://www.ams.org/mathscinet-getitem?mr=#1}{#2}
}
\providecommand{\href}[2]{#2}

\end{document}